\documentclass[11pt,reqno]{amsart}
\usepackage{latexsym,amssymb,amsfonts,amsmath,amsthm}
\usepackage{tocvsec2}
\usepackage[noadjust]{cite}
\usepackage{hyperref}
\usepackage[margin=1in]{geometry}

\newtheorem{theorem}{Theorem}[section]
\newtheorem{proposition}[theorem]{Proposition}

\theoremstyle{definition}
\newtheorem{definition}[theorem]{Definition}
\newtheorem{example}[theorem]{Example}
\newtheorem{remark}[theorem]{Remark}
\newtheorem{conj}[theorem]{Conjecture}

\numberwithin{equation}{section}
\numberwithin{table}{section}

\newcommand{\ch}{\operatorname{char}}
\newcommand{\chG}{\underline{\operatorname{ch}}_G}
\newcommand{\qpoch}[3]{(#1;#2)_{{#3}}}
\renewcommand{\geq}{\geqslant}
\renewcommand{\leq}{\leqslant}

\allowdisplaybreaks

\begin{document}

\title[Log-concavity of parabolic Verma characters and restricted Kostant
partitions]{Log-concavity of characters of parabolic Verma
modules,\\ and of restricted Kostant partition functions}


\author{Apoorva Khare}
\address[A.~Khare]{Department of Mathematics, Indian Institute of
Science, Bangalore -- 560012, India; and Analysis and Probability
Research Group, Bangalore -- 560012, India}
\email{\tt khare@iisc.ac.in}

\author{Jacob P.\ Matherne}
\address[J.P.~Matherne]{Department of Mathematics, North Carolina State
University, Raleigh, NC 27695, USA}
\email{\tt jpmather@ncsu.edu}

\author{Avery St.~Dizier}
\address[A.~St.~Dizier]{Department of Mathematics, Michigan State
University, East Lansing, MI 48824, USA}
\email{\tt stdizier@msu.edu}

\keywords{Parabolic Verma module, restricted Kostant partition function,
log-concavity, Lorentzian polynomial, higher order Verma module}

\subjclass[2020]{Primary 05E10, 
17B10; 
Secondary 33D52, 
52B11} 

\begin{abstract}
In 2022, Huh--Matherne--M\'esz\'aros--St.~Dizier 
showed that normalized Schur polynomials are Lorentzian, thereby yielding
their continuous (resp.\ discrete) log-concavity on the positive orthant
(resp.\ on their support, in type $A$ root directions).  A
reinterpretation of this result is that the characters of
finite-dimensional simple representations of
$\mathfrak{sl}_{n+1}(\mathbb{C})$ are denormalized Lorentzian
(DL).  In the same paper, these authors also showed that
shifted characters of Verma modules over
$\mathfrak{sl}_{n+1}(\mathbb{C})$ are DL.

In this work we extend these results to a larger family of modules that
subsumes both of the above: we show that shifted characters of all
parabolic Verma modules over $\mathfrak{sl}_{n+1}(\mathbb{C})$ are
denormalized Lorentzian. The proof involves certain graphs on $[n+1]$;
more strongly, we explain why the character (i.e., generating function)
of the Kostant partition function of any loopless multigraph on $[n+1]$
is Lorentzian after shifting and normalizing.

We then show that parabolic Vermas form a ``maximal'' class
with log-concave (hence DL) characters. Namely, log-concavity fails in
greater generality along three natural directions:
(1)~it does not hold for every simple Lie type,
(2)~nor for a larger universal family of highest weight
modules, the higher order Verma modules, even in type $A$,
and
(3)~it does not always hold for important generalizations of Schur
polynomials: the Jack and Macdonald polynomials.

Finally, we extend these results to parabolic (i.e.\ ``first
order'') and higher order Verma modules over the semisimple Lie algebras
$\oplus_{t=1}^T \mathfrak{sl}_{n_t+1}(\mathbb{C})$.
We also partially resolve a conjecture of Huh et al.\ on the
DL property for integral highest weight simple modules.
\end{abstract}

\maketitle

\settocdepth{section}
\tableofcontents

\section{Introduction and main results}

This paper adds to the classical and recent works that study symmetric
functions (in finitely many variables) from an analysis perspective,
specifically, their behavior when the variables are evaluated on the
positive orthant. This includes the 2011 paper of
Cuttler--Greene--Skandera~\cite{CGS} (which includes a literature survey
with links to numerous classical works, by Maclaurin, Newton, Muirhead,
Schur, Gantmacher, and others),
as well as subsequent works by Sra \cite{Sra},
McSwiggen--Novak \cite{MN},
one of us with Tao \cite{KT},
and by the other two of us with Huh and M\'esz\'aros \cite{HMMS}.
In particular, this last work contained the following two results
\cite[Theorem~3 and Proposition~11]{HMMS}:
\begin{enumerate}
\item Normalized Schur polynomials are Lorentzian (see~\eqref{Enormal}
below for the definition of ``normalized''). This implies their
``continuous'' log-concavity on the positive orthant, as well as the
discrete log-concavity of their coefficients (the Kostka numbers) along
type $A$ root directions -- i.e.\ for $\mathfrak{sl}_{n+1}(\mathbb{C})$.

\item The Kostant partition function, i.e.\ the character of any Verma
module (which encodes its weight multiplicities) over
$\mathfrak{sl}_{n+1}(\mathbb{C})$, is also discretely log-concave along
type $A$ root directions. More strongly, and as in~(1), its normalization
is Lorentzian \cite[Proposition~13]{HMMS}, hence continuously
log-concave.
\end{enumerate}

Note that Schur polynomials are the characters of finite-dimensional
simple modules over $\mathfrak{sl}_{n+1}(\mathbb{C})$. It is natural to
ask if there is a class of representations which subsumes (or
interpolates between) these modules and Vermas, and such that the above
log-concavity results (both continuous and discrete) can be proved for
all modules in this larger class.

The goal of this paper is to provide an affirmative answer to these
questions, via \textit{parabolic Verma modules} $M(\lambda, J)$.  These
are indexed by a highest weight $\lambda$ and a subset $J$ of simple
roots/simple reflections -- equivalently, by $\lambda$ and a
parabolic subgroup $W_J$ of the Weyl group $W = S_{n+1}$ of
$\mathfrak{sl}_{n+1}(\mathbb{C})$. (See Section~\ref{Spvm} for notation
and details on parabolic Verma modules.) We formalize this via our first
main result, Theorem~\ref{T1} below. First, we set some notation for
the entire paper.

\begin{definition}
	Throughout, $\mathbb{N}$ denotes the nonnegative integers, and
	$[n+1] := \{ 1, \dots, n+1 \}$ for $n \in \mathbb{N}$. By a
	monomial in $x = (x_1, \dots, x_m)$ we mean $x^\mu :=
	\prod_{j=1}^m x_j^{\mu_j}$, where all $\mu_j \in \mathbb{Z}$. For
	$\mu \in \mathbb{N}^m$ define $\mu! := \prod_{j=1}^m \mu_j!$; now
	define the {\em normalization operator} on the space of Laurent
	series/generating functions over a field $\mathbb{F}$ of
	characteristic zero, via restriction to the monomials of
	nonnegative degree in each variable:
	\begin{equation}\label{Enormal}
	N \left( \sum_{\mu \in \mathbb{Z}^m} c_\mu x^\mu \right) :=
	\sum_{\mu \in \mathbb{N}^m} c_\mu \frac{x^\mu}{\mu!}.
	\end{equation}
	Finally, we write $\varepsilon_1, \dots, \varepsilon_{n+1}$ for
	the coordinate basis of $\mathbb{F}^{n+1}$ (or
	$\mathbb{Z}^{n+1}$) for $n \in \mathbb{N}$.
\end{definition}

We next recall the two notions of log-concavity that are
discussed in this work.

\begin{definition}
A polynomial $h(x) = \sum_{\mu \in \mathbb{N}^m} c_\mu x^\mu$ in the variables $x_1,
\dots, x_m$ is \textit{continuously log-concave} if either $h \equiv 0$
or $h>0$ on the positive orthant $\mathbb{R}^m_{>0}$ and $\log(h)$ is
concave here.
If $h$ is homogeneous, it is said to be \textit{discretely
log-concave}, or to have \textit{discretely log-concave coefficients (in
type $A$ root directions)}, if
\[
c_{\mu}^2 \geq c_{\mu+\varepsilon_i-\varepsilon_j}
c_{\mu-\varepsilon_i+\varepsilon_j} \quad\mbox{for every
$\mu\in\mathbb{N}^m$ and $i,j\in[m]$.}
\]
\end{definition}

(Log-)Concavity is a well-studied notion, while its
discrete univariate version has also been investigated since Newton's
inequalities and total positivity. The multivariate version is less
studied; see Section~\ref{Sadlc} for some recent positive (and two novel
negative) results.

\subsection{Lorentzian polynomials}

Lorentzian polynomials, introduced in the groundbreaking work of
Br\"and\'en and Huh \cite{BH} (and independently in
\cite{AGV21,ALOGV24a,ALOGV24b} under the name \emph{completely
log-concave polynomials}), provide a powerful unifying framework
connecting discrete and continuous log-concavity. Lorentzian polynomials
have since seen myriad applications across mathematics 
\cite{BH,EH20,MS21,HMMS,BL23,BLP23,Ros23,ALOGV24b,BES24,HMV24,MMS24}.

\begin{definition}[{\cite[pp.~822--823]{BH}}]
A homogeneous polynomial $h\in\mathbb{R}[x_1,\ldots,x_m]$ of degree $d$
is called \emph{Lorentzian} if the following conditions hold:
\begin{enumerate}
	\item The coefficients of $h$ are nonnegative;
	\item The support of $h$ is M-convex.\footnote{A subset
	$J$ of $\mathbb{N}^m$ is \textit{M-convex} if for $\alpha \neq
	\beta \in J$ and any $i \in [m]$ with $\alpha_i > \beta_i$, there
	is an index $j$ with $\alpha_j < \beta_j$ and $\alpha -
	\varepsilon_i + \varepsilon_j \in J$.}
	\item For any $i_1,\ldots,i_{d-2}\in [m]$, the quadratic form
	$\frac{\partial}{\partial x_{i_1}}\frac{\partial}{\partial
	x_{i_2}}\cdots \frac{\partial}{\partial x_{i_{d-2}} }h$ has at
	most one positive eigenvalue.
\end{enumerate}		
We say $h$ is \emph{denormalized Lorentzian} if $N(h)$
(see~\eqref{Enormal}) is Lorentzian.
\end{definition}

We now collect together the key properties of Lorentzian
polynomials that are used below.

\begin{theorem}\label{thm:BHdoubleLC}
Suppose $h(x) = \sum_{\mu\in\mathbb{N}^m} c_\mu x^{\mu}$ is denormalized
Lorentzian and nonzero. Then:
\begin{enumerate}
\item \cite[Theorem~2.30]{BH}
$N(h)$ is continuously log-concave.\footnote{In fact, the continuous
log-concavity of all derivatives of $N(h)$ was introduced by Gurvits
\cite{Gurvits} under the name \textit{strong log-concavity}, and in
\textit{loc.\ cit.}\ Br\"and\'en--Huh showed that this is equivalent to
the Lorentzianity of $N(h)$.}

\item \cite[Proposition~4.4]{BH}
$h$ is discretely log-concave.

\item \cite[Corollary 3.8]{BH}
If moreover $g(x)$ is also denormalized Lorentzian, then so is $gh$.
\end{enumerate}
\end{theorem}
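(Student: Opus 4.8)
The plan is to derive each of the three parts from the indicated result of Br\"and\'en--Huh \cite{BH}, the only genuine content being to unwind the phrase ``denormalized Lorentzian.''

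First, for part~(1): by hypothesis $N(h)$ is a nonzero Lorentzian polynomial, so it has nonnegative coefficients, whence $N(h)>0$ everywhere on $\mathbb{R}^m_{>0}$ (each monomial is positive there and at least one coefficient is). I would then invoke \cite[Theorem~2.30]{BH}, by which a Lorentzian polynomial is (strongly) log-concave, so in particular $\log N(h)$ is concave on the positive orthant. Combined with the positivity just noted, this is exactly the statement that $N(h)$ is continuously log-concave; the passage from $h$ to $N(h)$ causes no difficulty, since $N$ preserves homogeneity and nonvanishing, and $N(h)$ is Lorentzian by assumption.

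Next, for part~(2): writing $N(h)=\sum_{\mu}(c_\mu/\mu!)\,x^\mu$, which is Lorentzian, I would apply \cite[Proposition~4.4]{BH}, the log-concavity along root directions of the coefficients of a Lorentzian polynomial. A short computation --- in which the factors coming from $\partial^\alpha x^\mu = (\mu!/(\mu-\alpha)!)\,x^{\mu-\alpha}$ cancel exactly against the $1/\mu!$ --- shows that this statement, applied to $N(h)$, is precisely $c_\mu^2 \geq c_{\mu+\varepsilon_i-\varepsilon_j}\,c_{\mu-\varepsilon_i+\varepsilon_j}$ for all $\mu\in\mathbb{N}^m$ and $i,j\in[m]$ (with the convention $c_\nu := 0$ for $\nu\notin\mathbb{N}^m$), which is the definition of discrete log-concavity of $h$. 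So here the only work is routine factorial bookkeeping.

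Finally, for part~(3), which is the one step that is not a formal consequence of the definitions. The naive idea --- use ``a product of Lorentzian polynomials is Lorentzian'' on $N(g)N(h)$ --- does not help, because $N$ is not multiplicative and $N(gh)\neq N(g)N(h)$ in general, so Lorentzianity of $N(g)N(h)$ says nothing about $N(gh)$. Instead one invokes \cite[Corollary~3.8]{BH} directly: it is exactly the closure of the class of denormalized Lorentzian polynomials under multiplication, established there by passing through full polarization (which sends a denormalized Lorentzian polynomial to a multiaffine Lorentzian one, is compatible with products after symmetrization, and preserves Lorentzianity). This is the main obstacle --- the only part that genuinely requires a theorem rather than an unwinding --- but since we may cite \cite{BH}, it costs nothing here; I would only remark that $gh\neq 0$ because $\mathbb{R}[x_1,\dots,x_m]$ is an integral domain, so that the conclusion of part~(3) is not vacuous.
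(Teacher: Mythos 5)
Your proposal is correct and matches the paper's treatment: the paper gives no independent proof of Theorem~\ref{thm:BHdoubleLC}, stating it purely as a collection of cited facts from Br\"and\'en--Huh~\cite{BH}, and your argument simply unwinds those same citations (with the helpful added observation in part~(3) that $N$ is not multiplicative, so one must invoke \cite[Corollary~3.8]{BH} directly rather than multiplying $N(g)$ and $N(h)$).
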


\subsection{Main results}

\begin{theorem}\label{T1}
For any integer $n>0$ and parabolic Verma module $M(\lambda,J)$ over
$\mathfrak{sl}_{n+1}(\mathbb{C})$, and all $\delta \in \mathbb{N}^{n+1}$,
the normalization $N(x^\delta \cdot \ch M(\lambda, J))$ is Lorentzian.
Consequently, one has both a continuous and discrete version of
log-concavity:
\begin{enumerate}
\item $N(x^\delta \cdot \ch M(\lambda, J))$ is either identically zero or
log-concave as a function on the positive orthant
$\mathbb{R}_{>0}^{n+1}$, and 
\item if $\mu(ij) := \mu +
\varepsilon_i - \varepsilon_j$ for $i,j \in [n+1]$, then
\begin{equation}\label{EdiscLC}
(\dim M(\lambda,J)_\mu)^2 \geq \dim M(\lambda,J)_{\mu(ij)} \cdot
\dim M(\lambda,J)_{\mu(ji)}, \quad \forall \mu \in \mathfrak{h}^*, \ i,j
\in [n+1].
\end{equation}
\end{enumerate}
\end{theorem}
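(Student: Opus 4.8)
The plan is to reduce the Lorentzian statement for $\ch M(\lambda, J)$ to a statement purely about Kostant partition functions attached to certain graphs, then to prove that graph statement directly. The first step is to recall the BGG-type resolution (or rather the Weyl–Kac–type character formula) for a parabolic Verma module: if $\mathfrak{p} = \mathfrak{p}_J$ is the parabolic subalgebra with Levi factor $\mathfrak{l}_J$, then $M(\lambda, J)$ is induced from the finite-dimensional simple $\mathfrak{l}_J$-module of highest weight $\lambda$, so $\ch M(\lambda, J) = \ch L_{\mathfrak{l}_J}(\lambda) \cdot \prod_{\alpha \in \Delta^+ \setminus \Delta_J^+} (1 - e^{-\alpha})^{-1}$, where the product runs over the positive roots \emph{not} in the Levi. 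Writing everything in the $\varepsilon_i$ coordinates, $\prod_{\alpha}(1-e^{-\alpha})^{-1}$ is exactly the generating function of the Kostant partition function restricted to the submonoid spanned by the roots $\varepsilon_i - \varepsilon_j$ with $(i,j)$ ranging over a certain set of edges — i.e.\ the partition function of a (loopless) graph on $[n+1]$. Meanwhile $\ch L_{\mathfrak{l}_J}(\lambda)$ is a product of Schur polynomials (one per block of the Levi), which are denormalized Lorentzian by the Huh–Matherne–Mészáros–St.~Dizier result quoted in the excerpt. By part (3) of Theorem~\ref{thm:BHdoubleLC}, products of denormalized Lorentzian polynomials (after suitable shifting, which only multiplies by a monomial and is harmless for $N$) are denormalized Lorentzian, so the whole problem collapses to: \emph{the shifted-and-normalized character of the Kostant partition function of a loopless multigraph on $[n+1]$ is Lorentzian}. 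This is precisely the strengthening the abstract advertises.

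For that graph statement, the natural route is induction on the number of edges, using the two stability operations for Lorentzian polynomials. If $G$ has edge $e = \{i,j\}$, the generating function $K_G$ of its Kostant partition function satisfies a deletion–contraction-type recursion $K_G = K_{G - e} \cdot (1 - x_i x_j^{-1})^{-1}$ in the relevant localization — equivalently $K_G$ is obtained from $K_{G-e}$ by the substitution encoding "allow arbitrarily many copies of the root $\varepsilon_i - \varepsilon_j$," which on the level of normalized generating functions corresponds to an exponential-type operator. Rather than fight convergence issues, I would truncate: fix a target degree $\delta$ and work with the (finite) polynomial $x^\delta \cdot \ch M(\lambda,J)$ truncated to monomials of nonnegative degree, and show this polynomial family is stable under the operations $h \mapsto h|_{x_i \to x_i, x_j \to x_j}$ composed with multiplication by the truncated geometric series $\sum_{k\ge 0} x_i^k x_j^{-k}$. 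The key lemma will be that multiplying a Lorentzian polynomial by such a one-parameter family and renormalizing preserves Lorentzianity; this should follow from the known closure of Lorentzian polynomials under (i) products with linear forms $x_i + x_j$ in two variables and (ii) the "lift/project" operations of Brändén–Huh, together with the observation that $(1 - x_i/x_j)^{-1}$, once homogenized by an extra power of $x_j$, is a limit of products of such factors.

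The main obstacle I expect is handling the \emph{shift} $\delta$ and the \emph{truncation/normalization} bookkeeping uniformly. Lorentzianity is a property of honest homogeneous polynomials, but $\ch M(\lambda, J)$ is a genuine power series; multiplying by $x^\delta$ and applying $N$ only makes sense after one pins down which finitely many monomials survive, and one must check that M-convexity of the support (condition (2) in the definition of Lorentzian) is preserved at every stage — the Kostant partition function's support is a translated submonoid, and one needs that truncating it to a coordinate-wise box keeps it M-convex, which is true but requires an argument about sections of M-convex sets by boxes. A secondary subtlety is that the Levi factor $\mathfrak{l}_J$ corresponds to a \emph{product} of $\mathfrak{sl}$'s acting on disjoint blocks of coordinates, so $\ch L_{\mathfrak{l}_J}(\lambda)$ is a product of Schur polynomials in \emph{disjoint} variable sets times a monomial in the remaining variables; verifying that this product, in the full variable set $x_1, \dots, x_{n+1}$, is still denormalized Lorentzian needs part (3) of Theorem~\ref{thm:BHdoubleLC} plus the trivial fact that a monomial is denormalized Lorentzian. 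Once these bookkeeping points are nailed down, the induction on edges should go through cleanly, and parts (1) and (2) of the conclusion are then immediate from Theorem~\ref{thm:BHdoubleLC}(1)–(2).
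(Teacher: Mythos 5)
Your opening two paragraphs match the paper's strategy exactly: factor $\ch M(\lambda,J)$ via parabolic induction into a product of Schur polynomials (the Levi factor character) and the generating function of a restricted Kostant partition function attached to the graph $G_J$ on $[n+1]$, then invoke the HMMS result for the Schur factors. This is precisely how the paper sets up the proof.

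The divergence is in how the infinite generating series $\chG$ is handled, and here your proposal has a genuine gap. The paper's trick is entirely finite and avoids any inductive or limiting argument: given $\delta$, one chooses integers $n_{ij}$ large enough that $\beta_i := \sum_{j>i} n_{ij} m_{ij} \geq \delta_i$, so that
\[
x^\delta \cdot \ch M(\lambda,J) \ \equiv \ p(x)\, x^{-\beta} \pmod{\ker N},
\]
where $p(x) := x^\delta \prod_{j>i}\bigl(x_j^{n_{ij}} + x_i x_j^{n_{ij}-1} + \cdots + x_i^{n_{ij}}\bigr)^{m_{ij}} \cdot \prod_r \ch V_{J_r}(\lambda_r)$ is an honest homogeneous \emph{polynomial}, each factor of which is denormalized Lorentzian. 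The closure under products then makes $N(p)$ Lorentzian, and the derivative identity $\partial^\beta N(p) = N(p\,x^{-\beta})$ (which is just~\eqref{Enormal2}) together with the closure of Lorentzian polynomials under partial derivatives finishes the proof. You correctly identify the truncated geometric series as the right object, but you do not identify the derivative identity that makes the shift $x^{-\beta}$ harmless; instead you propose a ``key lemma'' about multiplying by one-parameter families of geometric factors and renormalizing, to be established via limits and lift/project operations, which is not established and whose formulation is unclear.

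A second, smaller issue: your claim that ``the whole problem collapses to'' the Lorentzianity of $N(x^\delta \chG)$ (i.e.\ Theorem~\ref{T2}) plus Theorem~\ref{thm:BHdoubleLC}(3) is not a valid direct reduction. The closure under products in Theorem~\ref{thm:BHdoubleLC}(3) is a statement about polynomials, while $\chG$ is a power series; one cannot simply multiply a ``denormalized Lorentzian power series'' by a denormalized Lorentzian polynomial and conclude. The paper flags this explicitly (``as one cannot directly apply it'') and instead re-runs the truncation argument with both the KPF factors and the Schur factors bundled into the single polynomial $p(x)$ before applying $N$. If you want your modular structure (prove the graph statement first, then combine), you would need an additional lemma that handles the product of a power series and a polynomial through $N$, which your proposal does not supply.
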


This result specializes to \cite[Theorems~1--3]{HMMS} for
finite-dimensional simple modules/Schur polynomials, by setting $J=I$
and $\delta = 0$. Similarly, one recovers \cite[Propositions~11,
13]{HMMS} for Verma modules/the (usual) Kostant partition function, by
setting $J = \emptyset$. 

Here is a second theme that emerged during the course of proving
Theorem~\ref{T1}: we were naturally led to exploring connections between
parabolic Verma characters, the associated restricted Kostant partition
functions, and the theory of flow polytopes. 
In the flow polytope language, the novel ingredient in the proof
of~\eqref{EdiscLC} involves working with flow polytopes of directed
simple graphs with vertex set $[n+1]$ whose omitted edges comprise an
order-ideal in the root poset.
The following result shows more strongly that the restricted Kostant
partition function for an \textit{arbitrary} set of edges is discretely
log-concave -- and continuously so as well.

\begin{theorem}\label{T2}
Let $G$ be any loopless directed finite multigraph on $[n+1]$ with edges
directed $i \to j$ for $i<j$. Then for any $v\in \mathbb{Z}^{n+1}$ and
$i,j\in [n+1]$,
\[
K_G(v)^2 \geq 
K_G(v + \varepsilon_i - \varepsilon_j) 
K_G(v + \varepsilon_j - \varepsilon_i),
\]
where $K_G(\cdot)$ denotes the restricted Kostant partition function
of~$G$ (see Definition~\ref{Dkpf}).
More strongly, if $\chG$ denotes the generating function of
$K_G$, then $N(x^\delta \cdot \chG(x))$ is Lorentzian for all $\delta \in
\mathbb{N}^{n+1}$.
\end{theorem}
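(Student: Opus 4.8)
The plan is to prove the strong statement — that $h_\delta := N(x^\delta\cdot\chG(x))$ is Lorentzian for every $\delta\in\mathbb{N}^{n+1}$ — and to derive the two displayed inequalities from it. Writing $\chG(x)=\prod_{1\le i<j\le n+1}(1-x_ix_j^{-1})^{-m_{ij}}$ with $m_{ij}\in\mathbb{N}$ the multiplicity of the edge $i\to j$: since each $\varepsilon_i-\varepsilon_j$ has degree $0$, the series $x^\delta\chG(x)$ is homogeneous of degree $|\delta|$, so $h_\delta$ is an honest homogeneous polynomial, with the coefficient of $x^\mu/\mu!$ equal to the lattice-point count $K_G(\mu-\delta)$ of the flow polytope of $G$ with netflow $\mu-\delta$. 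Granting that $h_\delta$ is Lorentzian for all $\delta$, the continuous log-concavity of $N(x^\delta\chG)$ is Theorem~\ref{thm:BHdoubleLC}(1); for the quadratic inequality, given $v\in\mathbb{Z}^{n+1}$ and $i,j$, choose $\delta$ so large that $v+\delta$ and $v+\delta\pm(\varepsilon_i-\varepsilon_j)$ all lie in $\mathbb{N}^{n+1}$, and apply Theorem~\ref{thm:BHdoubleLC}(2) to $h_\delta$. So the whole content is the Lorentzianity of $h_\delta$, and I would check the three defining conditions in turn.

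Nonnegativity of the coefficients is immediate since $K_G\ge0$. For M-convexity of $\operatorname{supp}(h_\delta)=\{\mu\in\mathbb{N}^{n+1}:K_G(\mu-\delta)\ne0\}$ I would argue with flows: a point $\mu$ lies in this set exactly when $G$ admits a nonnegative integer edge-flow with netflow $\mu-\delta$ at every vertex. Given two such $\mu\ne\mu'$ and an index $i$ with $\mu_i>\mu'_i$, fix realizing flows $c,c'$; because $G$ is acyclic (all edges go $i\to j$ with $i<j$) the difference $c-c'$ decomposes into directed source-to-sink paths, and the netflow excess at $i$ forces one of them to run from $i$ to some $j$ with $\mu_j<\mu'_j$. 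Subtracting one unit of flow along that path turns $c$ into a nonnegative integer flow with netflow $\mu-\varepsilon_i+\varepsilon_j$, and $\mu-\varepsilon_i+\varepsilon_j\in\mathbb{N}^{n+1}$ because $\mu_i\ge1$ and $\mu_j\ge0$. (This is the flow-polytope input highlighted in the introduction; restricted to graphs whose omitted edges form an order ideal in the root poset, it is what feeds into Theorem~\ref{T1}.)

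The third condition — at most one positive eigenvalue for every iterated $(|\delta|-2)$-fold partial derivative of $h_\delta$ — is the crux, and I would prove it by induction on the number of edges $\sum_{i<j}m_{ij}$. The edgeless base case gives $h_\delta=x^\delta/\delta!$, a monomial, hence Lorentzian. For the step, fix an edge $a\to b$, let $G'$ be $G$ with one copy of it deleted, and combine the factorization $\chG=(1-x_ax_b^{-1})^{-1}\cdot(\text{analogous series for }G')$ with the elementary two-variable identity
\[
N\!\left(x_a^{p}x_b^{q}(1-x_ax_b^{-1})^{-m}\right)=\sum_{\max(0,-p)\le k\le q}\binom{k+m-1}{m-1}\frac{x_a^{\,p+k}x_b^{\,q-k}}{(p+k)!\,(q-k)!},
\]
whose coefficient sequence is log-concave by a routine factorial estimate, so the right-hand side is Lorentzian. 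Expanding $(1-x_ax_b^{-1})^{-1}=\sum_{k\ge0}x_a^kx_b^{-k}$ and regrouping exhibits $h_\delta$ as a sum — indexed by the monomials $x^\nu$ occurring in $x^\delta\cdot(\text{series for }G')$, with coefficients $K_{G'}(\nu-\delta)$ — of Lorentzian polynomials of the shape (constant)$\,\cdot\,$(a two-variable polynomial as above in $x_a,x_b$)$\,\cdot\,$(monomial in the remaining variables); one must then show this particular sum is again Lorentzian.

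I expect that last gluing step to be the main obstacle. The reason a naive ``product of single-edge Lorentzians'' argument fails is that $N$ does not commute with multiplication: $N(x^\alpha x^\beta)$ differs from $N(x^\alpha)N(x^\beta)$ by a factor $\alpha!\,\beta!/(\alpha+\beta)!$, so Theorem~\ref{thm:BHdoubleLC}(3) cannot be applied one factor at a time to $\chG=\prod(1-x_ix_j^{-1})^{-m_{ij}}$. To make the induction go through I would use the M-convexity established above together with the common Hessian structure of the two-variable building blocks — concretely, realize $h_\delta$ as the image of an honest product of such building blocks (times a monomial) under a substitution of the variables by nonnegative linear forms, and invoke closure of Lorentzian polynomials under such substitutions — so that the way the pieces fit together is dictated by the combinatorics of subdivisions of the flow polytope of $G$ into products of simpler flow polytopes.
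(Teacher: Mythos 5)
Your reduction of the two inequalities to the Lorentzianity of $h_\delta = N(x^\delta\cdot\chG(x))$ is correct, and your flow-based argument for M-convexity of the support is a reasonable sketch. But the heart of the theorem — the quadratic (Hessian) condition on all iterated derivatives — is not established. You acknowledge that the ``gluing step'' in your induction-on-edges scheme is the main obstacle, and the substitution-by-linear-forms idea you gesture at to close it is not spelled out; as stated, it is not a proof.

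Moreover, your stated reason for abandoning the product route misdiagnoses the difficulty. You write that Theorem~\ref{thm:BHdoubleLC}(3) cannot be applied one factor at a time because $N$ is not multiplicative. But that is precisely what Theorem~\ref{thm:BHdoubleLC}(3) is designed to overcome: it says that if $N(g)$ and $N(h)$ are Lorentzian then so is $N(gh)$, \emph{without} claiming $N(gh)=N(g)N(h)$. The genuine obstruction to applying it directly to $\chG=\prod_{j>i}(1-x_jx_i^{-1})^{-m_{ij}}$ is different: the individual factors are Laurent \emph{series}, not polynomials, so the Lorentzian machinery does not even apply to them. The paper resolves exactly this point with a truncation-and-derivative trick that you did not find. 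Choose integers $n_{ij}$ large enough (e.g.\ all equal to a common bound depending on $\delta$) so that truncating each geometric series at level $n_{ij}$ does not change $N(x^\delta\cdot\chG)$; then
\[
N(x^\delta\cdot\chG(x))
\;=\;
N\!\left(x^\delta\prod_{j>i}\bigl(x_j^{n_{ij}}+x_ix_j^{n_{ij}-1}+\cdots+x_i^{n_{ij}}\bigr)^{m_{ij}}\cdot x^{-\beta}\right),
\]
where $\beta_i=\sum_{j>i}n_{ij}m_{ij}$. The bracketed product $p(x)$ is now a genuine homogeneous polynomial; each truncated factor normalizes to $(x_i+x_j)^{n_{ij}}/n_{ij}!$, which is trivially Lorentzian, and $x^\delta$ normalizes to a monomial, so Theorem~\ref{thm:BHdoubleLC}(3) gives that $N(p(x))$ is Lorentzian. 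Finally the paper's identity $\partial^\beta N(p(x)) = N(p(x)\cdot x^{-\beta})$, together with closure of Lorentzian polynomials under partial derivatives, absorbs the $x^{-\beta}$ and yields the Lorentzianity of $h_\delta$. This is shorter and fully elementary, and it avoids any direct verification of the eigenvalue condition — precisely the point where your proposal has a gap.

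Two smaller remarks. First, your two-variable formula for $N(x_a^px_b^q(1-x_ax_b^{-1})^{-m})$ is the single-edge case of the above, but your plan expands $(1-x_ax_b^{-1})^{-1}$ back into an infinite sum and tries to Lorentzian-glue infinitely many pieces; the paper instead keeps everything finite by picking the truncation level up front. Second, your M-convexity argument (decomposing $c-c'$ into directed paths in the residual graph of the acyclic $G$) is fine as far as it goes, but the paper never needs to verify M-convexity separately — it comes for free once Lorentzianity of $h_\delta$ is established via closure under products and derivatives.
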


Note the discrete log-concavity assertion of Theorem \ref{T2} is
also proved in \cite[Corollary 5.2]{MS21} using Lorentzian projections of
the integer-point transforms of flow polytopes.

Our next result shows that the (discrete) log-concavity of parabolic
Verma modules $M(\lambda,J)$ is a ``tight'' improvement over the
results in~\cite{HMMS} for Vermas and finite-dimensional simples, from
the viewpoint of representation theory. The family of parabolic Verma
modules was shown in recent work~\cite{Teja} to be a part of the
\textit{higher order Verma modules}, which enjoy similar universal
properties to $M(\lambda,J)$. In this language, usual Verma modules are
of zeroth order, while parabolic Vermas are of first order -- and by
Theorem~\ref{T1}, all of their characters are log-concave.

\begin{theorem}\label{T4}
Let $m \geq 2$ and consider any $m$th order Verma
$\mathfrak{sl}_{n+1}(\mathbb{C})$-module $V$ that lacks singleton holes.
Then $\ch V$ is not (discretely) log-concave.
\end{theorem}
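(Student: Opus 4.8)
The plan is to exhibit, for every $m$th order Verma module $V$ (with $m\geq 2$) that lacks singleton holes, an explicit pair of weights $\mu, i, j$ violating the discrete log-concavity inequality $(\dim V_\mu)^2 \geq \dim V_{\mu(ij)}\cdot \dim V_{\mu(ji)}$. The natural place to look is near the top of the module. Recall that an $m$th order Verma module is built, roughly, as a quotient (or the relevant universal object) controlled by a collection of ``holes'' specifying which strings of root vectors act locally nilpotently; ``lacking singleton holes'' means no single simple root is a hole, so that in a neighbourhood of the highest weight $\lambda$ the module looks exactly like the ordinary Verma module $M(\lambda)$ along each simple root direction, but the second-order data forces some weight space strictly below that of $M(\lambda)$. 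The key combinatorial point is that the ordinary Kostant partition function is \emph{not} strictly log-concave in a strong enough sense: equality or near-equality holds at the boundary of the support, and truncating even a single weight multiplicity (which is what happens at the first ``higher-order'' relation) breaks the inequality at an adjacent triple of weights.

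Concretely, I would proceed as follows. First, fix a simple root $\alpha_k$ and consider weights of the form $\mu = \lambda - c(\alpha_{k}+\alpha_{k+1})$ or $\mu = \lambda - (\text{small combination involving two simple roots})$, chosen so that $\mu(ij)$ and $\mu(ji)$ (for a suitable $A$-type root direction $\varepsilon_i - \varepsilon_j$) land on weights where the higher-order relations of $V$ have already ``kicked in'' asymmetrically. Because $V$ lacks singleton holes, the first genuinely new relation (relative to $M(\lambda)$) occurs at depth $\geq 2$ and involves a \emph{non-simple} positive root or a product of two root vectors; this creates a single weight space whose dimension is strictly smaller than the corresponding Verma (equivalently, parabolic-Verma) multiplicity, while neighbouring spaces remain full. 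Second, I would compute the three relevant multiplicities $\dim V_\mu$, $\dim V_{\mu(ij)}$, $\dim V_{\mu(ji)}$ in terms of ordinary Kostant partition values minus explicit correction terms coming from the order-$m$ relations, using the combinatorial description of higher order Verma modules from~\cite{Teja}. Third, I would check that the correction at the ``middle'' weight $\mu$ is large enough (or the corrections at the two outer weights small enough) to reverse the Lorentzian inequality of Theorem~\ref{T2} for the full Kostant partition function --- precisely because Theorem~\ref{T2}'s inequality is typically an \emph{equality} or holds with small slack at the spots we are perturbing.

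The main obstacle will be the third step: turning ``a multiplicity drops by the higher-order relation'' into a \emph{quantitative} failure of log-concavity that is uniform over all such $V$ and all $n$. Two sub-issues arise. (a) One must locate a triple $(\mu, i, j)$ where the unperturbed Kostant counts already satisfy $c_\mu^2 = c_{\mu(ij)}c_{\mu(ji)}$ exactly (boundary of the M-convex support), so that \emph{any} strictly positive correction at $\mu$, or any configuration where the corrections do not exactly compensate, destroys the inequality; the combinatorics of flow polytopes / lattice points on the boundary of the support should supply such triples, and the ``no singleton holes'' hypothesis guarantees the perturbation is nonzero there. (b) One must ensure the chosen weights genuinely lie in the support of $V$ and are not themselves killed --- this is where the hypothesis ``lacks singleton holes'' is used a second time, since singleton holes would collapse entire root strings and could make the candidate weights vanish, trivializing the inequality rather than violating it. I expect the cleanest route is to reduce to the smallest case (a rank-$2$ or rank-$3$ sub-root-system and a second-order hole supported there, using that $V$ restricted to a Levi still has the relevant structure), produce a single explicit numerical counterexample there, and then embed it into general $n$ by padding with extra variables --- invoking that appending unused coordinates preserves the failure of the inequality.

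Finally, I would record the case $m > 2$ by noting that an $m$th order Verma with $m\geq 3$ (still lacking singleton holes) contains, among its defining data, the same kind of depth-$\geq 2$ relation, so the identical choice of triple $(\mu, i, j)$ near the top works verbatim; alternatively one observes that higher order Vermas surject onto lower ones in a way that lets the $m=2$ counterexample propagate. This reduces the whole theorem to the single well-chosen small computation described above.
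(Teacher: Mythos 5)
Your high-level strategy matches the paper's: find three weights near the top whose ordinary Kostant (Verma) multiplicities are all equal --- so that the unperturbed log-concavity inequality is tight --- and show that the correction coming from the higher-order relations is asymmetric in a way that breaks it. That part of the intuition is right. But there are several gaps that prevent the proposal from being a proof.

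First, the quantitative mechanism you describe is incorrect. You claim that ``any strictly positive correction at $\mu$ \ldots destroys the inequality,'' but a uniform correction at $\mu$, $\mu+\beta$, $\mu-\beta$ preserves equality (subtracting the same amount from all three keeps $(4c-a)^2=(4c-a)(4c-a)$). What actually happens is more specific: taking $H=\{i_1<i_2<\cdots\}$, $\beta=\alpha_{i_2}$, and a carefully chosen $\mu$, the Verma values are all $2^{i_2-i_1}=:4c$, while the corrections (from the Verma submodule generated by $\prod_r f_{i_r}^{\lambda(h_{i_r})+1}m_\lambda$) are $c$ at $\mu+\beta$ and $2c$ at both $\mu$ and $\mu-\beta$, giving the quotient multiplicities $3c,2c,2c$ with $(2c)^2 < 3c\cdot 2c$. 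Your proposal never identifies the triple, never pins down $\beta$, and never does the partition-function count that yields these numbers; it also mischaracterizes the effect of the relation as making ``a single weight space'' drop while ``neighbouring spaces remain full,'' whereas in fact two of the three weights drop and the third drops less.

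Second, the multiple-hole case is not addressed. When $\mathcal{H}^{\min}$ contains several holes $H_1,\ldots,H_l$, the module is not a tensor product, and the submodules from the different holes can a priori contribute at the chosen weights. The paper selects a specific hole $H_{j_0}$ by an explicit algorithm (maximize the smallest element, then minimize the second element) and proves, using the irredundancy of $\mathcal{H}^{\min}$, that the other holes' submodules have zero weight spaces at $\lambda+\mu_p$; this reduces the computation to the single-hole case. Your suggestion to ``reduce to a small Levi and pad with extra variables'' does not deal with this interaction. Likewise your reduction from $m\geq 3$ to $m=2$ via a surjection of higher-order Vermas is unnecessary and not how one would proceed: the single-hole counterexample above works verbatim for any $|H|\geq 2$, because only the two smallest elements $i_1,i_2$ of the hole enter the computation, with the remaining $i_r$ contributing trivially.

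In short, the proposal is a reasonable sketch of where to look, but the key quantitative claim is wrong as stated, no concrete triple or partition-function computation is supplied, and the non-trivial reduction from a general hole collection to a single hole is entirely missing.
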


Thus, parabolic Verma modules are the ``best possible'' among these
universal highest weight modules as far as log-concavity of their
character goes.

Our final result extends the results above -- and hence some of the
results in \cite{HMMS} -- from (parabolic) Verma modules over
$\mathfrak{sl}_{n+1}(\mathbb{C})$ to those over a larger family of
complex semisimple Lie algebras:

\begin{theorem}\label{T5}
Let $n_1, \dots, n_T$ be positive integers, and let $\mathfrak{g} =
\oplus_{t=1}^T \mathfrak{sl}_{n_t+1}(\mathbb{C})$ with positive roots
$\Delta$. Then for all $\delta \in \mathbb{N}^{d}$, where $d =
\sum_{t=1}^T (n_t+1)$, the normalized shifted character $N(x^\delta \cdot
\ch M(\lambda,J))$ of every parabolic Verma $\mathfrak{g}$-module is
Lorentzian -- and in particular, continuously and discretely (along all
root directions in $\Delta$) log-concave, as in Theorem~\ref{T1}.
\end{theorem}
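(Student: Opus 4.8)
The plan is to reduce Theorem~\ref{T5} to Theorem~\ref{T1} applied to each simple summand, using two facts: a parabolic Verma module over a direct sum of semisimple Lie algebras is an external tensor product of parabolic Vermas over the summands (so its character factors over disjoint blocks of variables), and products of denormalized Lorentzian polynomials are denormalized Lorentzian (Theorem~\ref{thm:BHdoubleLC}(3)). In detail, write $\mathfrak{g}_t := \mathfrak{sl}_{n_t+1}(\mathbb{C})$ with Cartan $\mathfrak{h}_t$ and simple roots $I_t$, so $\mathfrak{h}^* = \bigoplus_t \mathfrak{h}_t^*$ and $I = \bigsqcup_t I_t$; a weight splits as $\lambda = (\lambda_1,\dots,\lambda_T)$, a subset of simple roots as $J = \bigsqcup_t J_t$, and $\lambda$ is $J$-dominant integral iff each $\lambda_t$ is $J_t$-dominant integral. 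Since $U(\mathfrak{g}) \cong \bigotimes_t U(\mathfrak{g}_t)$, one has $\mathfrak{p}_J = \bigoplus_t \mathfrak{p}_{J_t}$, $\mathfrak{l}_J = \bigoplus_t \mathfrak{l}_{J_t}$, the inducing finite-dimensional simple $\mathfrak{l}_J$-module of highest weight $\lambda$ is the external tensor product of the corresponding $\mathfrak{l}_{J_t}$-modules, and parabolic induction commutes with external tensor products; hence $M(\lambda,J) \cong M(\lambda_1,J_1) \boxtimes \cdots \boxtimes M(\lambda_T,J_T)$ as $\mathfrak{g}$-modules. Realizing (as in Section~\ref{Spvm}) the weights of $\mathfrak{g}_t$ inside $\mathbb{Z}^{n_t+1}$, so that the $d = \sum_t(n_t+1)$ coordinates split into blocks $x^{(t)} = (x^{(t)}_1,\dots,x^{(t)}_{n_t+1})$, this gives at the level of formal characters
\[
x^\delta \cdot \ch M(\lambda,J) \;=\; \prod_{t=1}^{T} \Big( (x^{(t)})^{\delta^{(t)}} \cdot \ch M(\lambda_t,J_t) \Big), \qquad \delta = (\delta^{(1)},\dots,\delta^{(T)}) \in \mathbb{N}^d,
\]
a product of Laurent series in pairwise disjoint blocks of variables. (Alternatively one can factor the parabolic character formula $\ch M(\lambda,J) = \sum_{w\in W_J}(-1)^{\ell(w)}\ch M(w\cdot\lambda)$ blockwise, using $W_J=\prod_t W_{J_t}$, $\rho=\sum_t\rho_t$, and $\ch M(\mu)=\prod_t\ch M(\mu_t)$ for ordinary Vermas.)

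Next, each factor $g^{(t)} := (x^{(t)})^{\delta^{(t)}}\cdot\ch M(\lambda_t,J_t)$, regarded as an element of the Laurent-series ring in all $d$ variables, is denormalized Lorentzian: it involves only the variables $x^{(t)}$, so $N$ applied to it equals its normalization in the variables $x^{(t)}$ alone (the other coordinates contribute factors $x_k^0/0!=1$), which is Lorentzian by Theorem~\ref{T1} for $\mathfrak{sl}_{n_t+1}(\mathbb{C})$ and remains Lorentzian after adjoining the unused variables. Since each $g^{(t)}$ is supported on finitely many monomials of a fixed degree within its block (the type-$A$ roots in block $t$ preserve the coordinate sum there), the product $x^\delta\cdot\ch M(\lambda,J)$ is an honest polynomial, homogeneous of degree $\sum_i(\delta+\lambda)_i$. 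Applying Theorem~\ref{thm:BHdoubleLC}(3) $T-1$ times to $g^{(1)},\dots,g^{(T)}$ shows the product $x^\delta\cdot\ch M(\lambda,J)$ is denormalized Lorentzian, i.e.\ $N(x^\delta\cdot\ch M(\lambda,J))$ is Lorentzian; Theorem~\ref{thm:BHdoubleLC}(1),(2) then give, respectively, continuous log-concavity on $\mathbb{R}^d_{>0}$ and the inequalities $c_\mu^2 \geq c_{\mu+\varepsilon_i-\varepsilon_j}c_{\mu-\varepsilon_i+\varepsilon_j}$ for all $i,j\in[d]$ — in particular along every direction $\varepsilon_i-\varepsilon_j\in\Delta$, which are exactly the pairs $i,j$ in a common block.

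The analytic substance is entirely contained in Theorem~\ref{T1}; the only thing to get right here is bookkeeping, so I do not expect a genuine obstacle. The point that most deserves care is that the shift $x^\delta$ and the normalization operator $N$ both respect the partition of the $d$ variables into blocks — concretely, $\mu!=\prod_t(\mu^{(t)})!$ and $\mathbb{N}^d=\prod_t\mathbb{N}^{n_t+1}$, so $N(f_1\cdots f_T)=N(f_1)\cdots N(f_T)$ when $f_t$ depends only on $x^{(t)}$ — which is what lets the product structure survive normalization and lets Theorem~\ref{T1} be invoked factor-by-factor. (As noted, Theorem~\ref{thm:BHdoubleLC}(3) already packages exactly this, so this identity is used only for clarity.) The remaining items — the external tensor decomposition of $M(\lambda,J)$ and the finiteness/homogeneity that make ``Lorentzian'' meaningful — are standard and require no new ideas, which is consistent with Theorem~\ref{T5} being the omnibus generalization of the preceding results.
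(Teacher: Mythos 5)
Your proof is correct and follows essentially the same route as the paper's proof (the first-order case of Theorem~\ref{Thvm}): decompose $M(\lambda,J)$ over $\mathfrak{g}=\oplus_t\mathfrak{g}_t$ as an external tensor product of parabolic Vermas over the $\mathfrak{g}_t$, factor the normalized shifted character over the disjoint variable blocks, and invoke Theorem~\ref{T1} on each factor together with Theorem~\ref{thm:BHdoubleLC}(3) to pass to the product. The only difference is one of detail: you make explicit the multiplicativity of $N$ on disjoint variable blocks and the fact that a Lorentzian polynomial stays Lorentzian after adjoining unused variables, which the paper treats as implicit.
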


In particular, the (normalized shifted) character of every Verma module
and every finite-dimensional simple module over such $\mathfrak{g}$ is
continuously and discretely log-concave. Moreover, in Theorem~\ref{Thvm}
we characterize when higher order Verma modules over these $\mathfrak{g}$
fail to have discretely log-concave characters.

We end by extending \cite[Conjecture~12]{HMMS} in two ways:
(i)~to all weights $\lambda$ (possibly non-integral); and
(ii)~to the setting of Theorem~\ref{T5}:

\begin{conj}\label{Conj}
Let $\mathfrak{g} = \oplus_{t=1}^T \mathfrak{sl}_{n_t+1}(\mathbb{C})$ as
above. For an arbitrary weight $\lambda \in
\mathfrak{h}^*$, let $M(\lambda) \twoheadrightarrow V(\lambda)$ be the
unique simple highest weight module with highest weight $\lambda$. Then
for all $\delta \in \mathbb{N}^{d}$, where $d = \sum_{t=1}^T (n_t+1)$,
the normalized shifted character $N(x^\delta \cdot \ch V(\lambda))$ is
Lorentzian.
\end{conj}

It is clear that by setting $T=1$ in Conjecture~\ref{Conj}, and
restricting to integral highest weights $\lambda$, one recovers
\cite[Conjecture~12]{HMMS}. As we explain in Appendix~\ref{Sconj},
(a)~one can partly go the other way: if \cite[Conjecture~12]{HMMS} holds
then one can prove Conjecture~\ref{Conj} -- only for \textit{integral}
highest weights. Moreover,
(b)~Theorems~\ref{T1} and~\ref{T5} make partial (positive) progress in
verifying these conjectures, via Jantzen's simplicity criterion
\cite[Satz~4]{Jantzen}, which classifies the pairs $(\lambda, J)$ for
which $M(\lambda, J)$ is simple -- and hence its $x^\delta$-shifted
character is denormalized Lorentzian by Theorem~\ref{T5}. For instance,
for generic $\lambda$ in the space $\mathfrak{h}^*$ of (highest) weights
that avoid a countable collection of hyperplanes $H_{\alpha,n}$ (indexed
by a root $\alpha$ and an integer $n$), \cite[Conjecture~12]{HMMS}
follows from results in \cite{HMMS} and a fact on Verma modules. However,
these do not suffice to show the conjecture for the generic $\lambda$
among the remaining weights, i.e.\ the $\lambda$ that lie in exactly one
of these hyperplanes $H_{\alpha,n}$. We show that if $\alpha$ is simple
then our results confirm both conjectures -- for not necessarily integral
weights~$\lambda$.

\subsection*{Organization}

In Section~\ref{Spvm}, we introduce parabolic Verma modules and provide
background results for them.
In Section~\ref{Slorentzian}, we explain how parabolic Vermas connect to
restricted Kostant partition functions, and then show Theorems~\ref{T1}
and~\ref{T2}.
Next, in Section~\ref{Sflow} we recall the Lidskii volume formula and the
Alexandrov--Fenchel inequality, and use them to give an alternative proof of the
discrete log-concavity in Theorem~\ref{T2} -- but not in
Theorem~\ref{T1}, since discrete log-concavity fails to be preserved
under products (we provide counterexamples).

Section~\ref{Shigherorder} discusses the failure of discrete and hence
continuous log-concavity for normalized characters -- both for
higher-order Verma modules over $\mathfrak{sl}_{n+1}(\mathbb{C})$ (i.e.,
we show Theorem~\ref{T4}), as well as
for Verma modules outside type $A$. (Recall from~\cite[Figure 2]{HMMS}
that not all finite-dimensional simple modules in type $C_2$ have
log-concave characters.)
Moreover, we will also record here that certain important one- or
two-parameter generalizations of Schur polynomials -- namely,
Hall--Littlewood, Jack, and Macdonald polynomials -- do not possess
log-concave sequences of coefficients for all admissible values of the
parameters. Thus, parabolic (i.e.\ first-order) Verma modules of type $A$ are a ``maximal'' family of ``universal'' highest weight
modules that possess log-concave characters.
However, this is not true for higher order Verma modules, nor for Vermas or finite-dimensional simples across all Lie types,
and also not for Jack or Macdonald generalizations of the
characters.
Thus our main Theorem~\ref{T1} is ``tight'' along multiple fronts.

Next, in Section~\ref{Sfinal} we work over a direct sum of
$\mathfrak{sl}_{n+1}$'s and show Theorem~\ref{T5} (or its more precise
formulation in Theorem~\ref{Thvm}). Finally, we explain in detail in
Appendix~\ref{Sconj}, how Theorems~\ref{T1} and~\ref{T5} partly resolve
Conjecture~\ref{Conj} (hence \cite[Conjecture~12]{HMMS}).

\section{Background on parabolic Verma modules}\label{Spvm}

As the above account suggests, the results and proofs in this work
involve tools and ideas from several different subfields:
(a)~representation theory of Lie algebras;
(b)~flow polytopes and vector partition functions (from algebraic
combinatorics); and
(c)~log-concave/Lorentzian polynomials (in combinatorics/analysis).
Thus, a secondary goal of this work is to provide brief introductions to
these topics, as well as relatively detailed proofs, in the interest of
making this work as self-contained as possible for the readers from
various backgrounds/communities who might not be well-versed with a
subset of these topics. The familiar reader should feel free to skim
through (or even skip) these accounts, while taking with them the
notation that is set below.

\subsection{Notation for semisimple Lie algebras}

This subsection and the next two discuss semisimple Lie algebras -- e.g.\
$\mathfrak{sl}_{n+1}(\mathbb{C})$ -- and parabolic Verma modules over
them. This includes explaining why these are the ``natural'' class of
modules that unify/subsume both Verma modules and finite-dimensional
simple modules. See \cite{Hum} for a more detailed account of these
topics.

Let $\mathfrak{g}$ be any complex semisimple Lie algebra (for our
results, we work with $\mathfrak{g} = \mathfrak{sl}_{n+1}(\mathbb{C})$
with $n > 0$).
Let $\mathfrak{h}$ denote the Cartan subalgebra (correspondingly for us,
the space of traceless diagonal matrices), and fix a base of simple roots
$\{ \alpha_i : i \in I \}$ in $\mathfrak{h}^*$ (for us, $I = [n]$ and the
simple root $\alpha_i := \varepsilon_i - \varepsilon_{i+1}$ sends a
diagonal matrix $h$ to the difference $h_{ii} - h_{i+1,i+1}$ of diagonal
entries, for $i \in I$).
Then $\mathfrak{g}$ is generated as a Lie algebra by Chevalley
generators:
\begin{itemize}
\item the simple raising operators $e_i, i \in I$ (for us, the elementary
matrices $E_{i,i+1}$),
\item the simple lowering operators $f_i, i \in I$ (for us, the elementary
matrices $E_{i+1,i}$),
\item and their commutators $h_i = [e_i, f_i] \in \mathfrak{h}$ (for us,
the diagonal matrices $E_{ii} - E_{i+1,i+1}$).
The elements $h_i, i \in I$ form a basis of $\mathfrak{h}$, and
correspondingly, the simple roots $\{ \alpha_i : i \in I \}$ form a basis
of $\mathfrak{h}^*$.
\end{itemize}
The simple root vectors $e_i$ and $f_i$ generate ``opposite'' nilpotent
Lie subalgebras of $\mathfrak{g}$, denoted by $\mathfrak{n}^+$ and
$\mathfrak{n}^-$ respectively. (In our case, these are the strictly upper
and strictly lower triangular matrices, generated by $\{ e_i, f_i :
i \in [n] \}$ via the commutator bracket $[X,Y] := XY-YX$.)
Moreover, each $e_i$ is a simultaneous eigenvector for the adjoint action
of all of $\mathfrak{h}$. For instance in
$\mathfrak{sl}_{n+1}(\mathbb{C})$, we have
\[
[h, e_i] = [{\rm diag}(h_{jj})_j, E_{i,i+1}] = (h_{ii} - h_{i+1,i+1})
E_{i,i+1} = \alpha_i(h) e_i, \quad \forall h \in \mathfrak{h}.
\]
In addition, for any $i \neq j \in [n+1]$ we have $[h, E_{ij}] =
(\varepsilon_i - \varepsilon_j)(h) E_{ij}$. These nonzero functionals
$\varepsilon_i - \varepsilon_j,\ i \neq j$ are called the roots, and they
are nonnegative/nonpositive integer linear combinations of the simple
roots $\alpha_i$; e.g.\ if $i<j$ then $\varepsilon_i - \varepsilon_j =
\alpha_i + \alpha_{i+1} + \cdots + \alpha_{j-1}$.
Thus, $\mathfrak{n}^\pm$ are direct sums of one-dimensional root spaces
$\mathbb{C} E_{ij}$ with pairwise distinct roots (this holds for all
semisimple $\mathfrak{g}$).
We also write $\Delta = \{ \varepsilon_i - \varepsilon_j : i < j \in
[n+1] \}$ for the positive roots of $\mathfrak{g}$ -- note, this differs
from the Lie theory convention where $\Delta$ denotes all roots.

\subsection{Verma and finite-dimensional modules}

Denote the universal enveloping algebra of $\mathfrak{g}$ by
\[
U \mathfrak{g} := T(\mathfrak{g}) / \langle x \otimes y - y \otimes x -
[x,y] : x,y \in \mathfrak{g} \rangle.
\]
Recall, this is a unital associative $\mathbb{C}$-algebra and 
$\mathfrak{g} \hookrightarrow U \mathfrak{g}$. Moreover, the
multiplication map ${\rm mult} : U \mathfrak{n}^- \otimes U \mathfrak{h}
\otimes U \mathfrak{n}^+ \to U \mathfrak{g}$ is a $\mathbb{C}$-vector
space isomorphism.

Representations of
$\mathfrak{g}$ are precisely (left) $U \mathfrak{g}$-modules. An
important class of these consists of the \textit{Verma modules}
$M(\lambda)$ for all \textit{weights} $\lambda \in \mathfrak{h}^*$,
defined via
\[
M(\lambda) := \frac{U \mathfrak{g}}{U \mathfrak{g} \cdot \mathfrak{n}^+ +
\sum_{i \in I} \left( U \mathfrak{g} \cdot (h_i - \lambda(h_i)) \right)}.
\]
Thus from above, $M(\lambda) \cong U \mathfrak{n}^-$ as free rank-one $U
\mathfrak{n}^-$-modules, independent of $\lambda \in \mathfrak{h}^*$.
Moreover, the image of $1$ in $U \mathfrak{g}$, denoted by $m_\lambda$,
is a \textit{weight vector} (simultaneous eigenvector) for the
action of $\mathfrak{h}$ via $h \cdot m_\lambda = \lambda(h) m_\lambda$.
Thus the $\mathfrak{h}$-weight of e.g.\ $f_i^r m_\lambda$ is $\lambda - r
\alpha_i$, for $i \in I$ and $r \in \mathbb{N}$.

This brings us to the \textit{character} of a Verma module. Fix an
enumeration of the positive roots, say $\beta_1, \dots, \beta_k$; this
yields an ordered basis $(f_{\beta_1}, \dots, f_{\beta_k})$ of
$\mathfrak{n}^-$. Now by the above and the Poincar\'e--Birkhoff--Witt
(PBW) theorem, the words
\[
{\bf f}_\beta^{\bf m} := f_{\beta_1}^{m_1} \cdots f_{\beta_k}^{m_k},
\quad m_1, \dots, m_k \in \mathbb{N}
\]
form a $\mathbb{C}$-basis of $U \mathfrak{n}^-$. These words also
satisfy $[h, {\bf f}_\beta^{\bf m}] = -\sum_{r=1}^k m_r \beta_r(h) {\bf
f}_\beta^{\bf m}$ for all $h \in \mathfrak{h}$; i.e., ${\bf f}_\beta^{\bf
m}$ has $\mathfrak{h}$-weight $-\sum_{r=1}^k m_r \beta_r$. Similar
to above, the $\mathfrak{h}$-weight of ${\bf f}_\beta^{\bf m} m_\lambda$
equals $\lambda - \sum_{r=1}^k m_r \beta_r$. Thus via the isomorphism
$M(\lambda) \cong U(\mathfrak{n}^-)$, each weight space multiplicity
\[
\dim M(\lambda)_\mu = \dim U(\mathfrak{n}^-)_{\mu-\lambda} =:
K(\lambda-\mu)
\]
equals the number of ways in which to write $\lambda-\mu$ as a sum of
positive roots. (See e.g.\ Table~\ref{Ttable} below for some explicit
computations.) This map $K$ is the (usual) \textit{Kostant partition
function}. Thus we come to the character of $M(\lambda)$ as the
$e^\lambda$-shift of the generating function of $K$:
\begin{equation}\label{EKPF}
\ch M(\lambda) = \sum_{\beta \in \mathfrak{h}^*} K(\beta)
e^{\lambda-\beta} = \frac{e^\lambda}{ \displaystyle \prod_{r=1}^k \left(1 -
e^{-\beta_r}\right)}, \quad \lambda \in \mathfrak{h}^*.
\end{equation}

Having discussed (notation for) Verma modules, we turn to another
important class of $\mathfrak{g}$-representations: the finite-dimensional
modules. By Weyl's theorem, each of these is a direct sum of simple
modules, so it suffices to understand the latter. Recall that a weight
$\lambda \in \mathfrak{h}^*$ is said to be \textit{integral} if
$\lambda(h_i) \in \mathbb{Z}$ for all $i \in I$; these form a lattice
that is denoted in \cite{Hum} and in \cite{HMMS} by $\Lambda$. Within it
are the \textit{dominant integral weights}:
\begin{equation}\label{Eweightlattice}
\Lambda := \{ \lambda \in \mathfrak{h}^* :\lambda(h_i) \in
\mathbb{Z}\ \forall i \in I \} \quad \supset \quad \Lambda^+ := \{
\lambda \in \Lambda : \lambda(h_i) \geq 0\ \forall i \in I \}.
\end{equation}

Now the ``theorem of the highest weight'' says that simple
finite-dimensional $\mathfrak{g}$-modules are -- up to isomorphism -- in
bijection with dominant integral weights. More precisely, this bijection
sends $\lambda \in \Lambda^+$ to the quotient module
\[
V(\lambda) := M(\lambda) / \sum_{i \in I} U \mathfrak{g} \cdot
f_i^{\lambda(h_i) + 1} m_\lambda,
\]
and this is finite-dimensional and simple. Moreover, the celebrated
Weyl character formula says this module has character given by the
\textit{Schur polynomial} $s_\lambda$, and in the above notation it
equals
\begin{equation}\label{EWCF}
\ch V(\lambda) = \sum_{w \in W} (-1)^{\ell(w)} \frac{e^{w(\lambda+\rho) -
\rho}}{ \displaystyle \prod_{r=1}^k (1 - e^{-\beta_r})}
= \sum_{w \in W} (-1)^{\ell(w)} \ch M(w \bullet \lambda), \quad
\lambda \in \Lambda^+.
\end{equation}

Here, $\rho = \frac{1}{2} \sum_{r=1}^k \beta_r$ is the half-sum of the
positive roots, $w \bullet \lambda := w(\lambda + \rho) - \rho$, and $W$
is the Weyl group, which is the finite group of orthogonal
transformations of $\mathfrak{h}^*$ generated by the simple reflections
$s_{\alpha_i}$ -- with associated length function $\ell$.
E.g.\ for $\mathfrak{sl}_{n+1}(\mathbb{C})$, $W$ is the symmetric group
$S_{n+1}$, generated by the simple transpositions $(i\ i+1)$ for $1 \leq
i \leq n$; and a weight $\lambda = (\lambda_1, \dots, \lambda_n,
\lambda_{n+1})^T$ is dominant integral if and only if $\lambda(h_i) =
\lambda_i - \lambda_{i+1} \in \mathbb{N}$ for all $i \leq n$, i.e.\
$\lambda - \lambda_{n+1} (1,\dots,1)$ is a partition.

\subsection{Parabolic Verma modules}

Finally, we introduce the parabolic Verma modules, which are a natural
family of universal highest weight modules that interpolate between the
Verma modules $M(\lambda)$ for all $\lambda \in \mathfrak{h}^*$ and the
simple modules $V(\lambda)$ for $\lambda \in \Lambda^+$. The key fact
used here is that if $\lambda \in \mathfrak{h}^*$ and $i \in I$ are such
that $\lambda(h_i) \in \mathbb{N}$, then inside the Verma module
$M(\lambda)$, the vector $f_i^{\lambda(h_i)+1} m_\lambda$ is a highest
weight vector -- that is, it is killed by all of $\mathfrak{n}^+$ and has
$\mathfrak{h}$-weight $s_i \bullet \lambda := \lambda - (\lambda(h_i) + 1)
\alpha_i$. In particular, $U  \mathfrak{g} \cdot f_i^{\lambda(h_i) + 1}
m_\lambda \cong M(s_i \bullet \lambda)$.

Given a subset $J \subseteq I$ of (indices of) simple roots,
define the \textit{$J$-dominant integral weights} to be
\begin{equation}\label{ELambdaJplus}
\Lambda^+_J := \{ \lambda \in \mathfrak{h}^* : \lambda(h_i) \in
\mathbb{N}\ \forall i \in J \},
\end{equation}
and for each $\lambda \in \Lambda^+_J$ define the \textit{parabolic Verma
module}
\begin{equation}\label{Egvm}
M(\lambda,J) := M(\lambda) / \sum_{i \in J} U \mathfrak{g} \cdot
f_i^{\lambda(h_i) + 1} m_\lambda.
\end{equation}

Two ``extremal'' special cases of these modules come from the two
extremal values for $J$:
\begin{itemize}
\item If $J = \emptyset$, then $\Lambda^+_J = \mathfrak{h}^*$ and
$M(\lambda,J) = M(\lambda)$.
\item If $J = I$, then $\Lambda^+_J = \Lambda^+$ and $M(\lambda,J) =
V(\lambda)$.
\end{itemize}

Thus, parabolic Verma modules subsume both Verma modules and
finite-dimensional simple $\mathfrak{g}$-modules. In addition, the
Weyl character formula also extends to these modules:
\[
\ch M(\lambda,J) = \sum_{w \in W_J} (-1)^{\ell(w)} \frac{e^{w \bullet
\lambda}}{ \displaystyle \prod_{r=1}^k (1 - e^{-\beta_r})}
= \sum_{w \in W_J} (-1)^{\ell(w)} \ch M(w \bullet \lambda), \quad J
\subseteq I, \ \lambda \in \Lambda^+_J.
\]
(Here $W_J$ is the parabolic Weyl subgroup, generated by the simple
reflections $\{ s_{\alpha_i} : i \in J \}$.)
But even more is true: the Weyl character formula~\eqref{EWCF} is the
combinatorial shadow -- via taking the Euler characteristic -- of the BGG
resolution of $V(\lambda)$:
\[
0 \longrightarrow \bigoplus_{w \in W : \ell(w) = k} M(w \bullet \lambda)
\longrightarrow \ \cdots \ \longrightarrow \bigoplus_{w \in W : \ell(w) =
1} M(w \bullet \lambda) \longrightarrow M(\lambda) \longrightarrow
V(\lambda) \longrightarrow 0.
\]
(Trivially, the same 1-step resolution holds for every Verma module.) In fact such a resolution turns out to exist even more generally
-- for all parabolic Verma modules; see \cite{Hum} for details.\bigskip

Given these multiple ways in which parabolic Verma modules have the same
fundamental properties as Vermas and finite-dimensional simples, it is
natural to ask if their characters are always log-concave.   Indeed, the
extreme cases of $M(\lambda), \ \lambda \in \mathfrak{h}^*$ and
$V(\lambda), \ \lambda \in \Lambda^+$ were proven in \cite{HMMS}. The
motivating goal of this work is to answer this question affirmatively.

\section{Lorentzianity of normalized shifted characters of parabolic
Vermas}\label{Slorentzian}

In this section we show Theorems~\ref{T1} and~\ref{T2}. We first provide
the unfamiliar reader with a pathway to go from parabolic Vermas to
Kostant partition functions.

\subsection{From parabolic Verma modules to restricted Kostant partition
functions} 

An appropriate notion to study characters of parabolic Verma modules is
that of restricted Kostant partition functions (KPFs). This
extends~\eqref{EKPF} which rewrote all Verma module characters as shifts
of the generating function of the (usual) Kostant partition function.
Thus, we first explain how restricted KPFs naturally encode parabolic
Verma characters. As above, readers familiar with some but not all of the
material can skip the relevant subsections, only glancing at them for the
notation used later.

The \textbf{first step} in showing the discrete
log-concavity~\eqref{EdiscLC} of all $\ch M(\lambda,J)$ is to note that
every parabolic Verma module is obtained via parabolic induction from a
finite-dimensional simple module over a semisimple Lie subalgebra.
Namely, define $\mathfrak{g}_J$ to be the Lie subalgebra of
$\mathfrak{g}$ generated by the Chevalley generators $\{ e_i, f_i : i \in
J \}$. Then define the parabolic Lie subalgebra
\[
\mathfrak{p}_J := \mathfrak{g}_J + \mathfrak{h} + \mathfrak{n}^+.
\]
Notice that if $\lambda \in \Lambda^+_J$, then $\lambda(h_i) \in
\mathbb{N}$ for all $i \in J$; thus one forms the finite-dimensional
$\mathfrak{g}_J$-module $V_J(\lambda)$, generated by a highest weight
vector $v_\lambda$. This in fact has a $\mathfrak{p}^+_J$-module
structure via
\[
h \cdot v_\lambda = \lambda(h) v_\lambda\ \forall h \in \mathfrak{h};
\qquad \mathfrak{n}^+ \cdot v_\lambda = 0.
\]
Now it is known that the parabolic Verma module is the induction of this
$\mathfrak{g}_J$-integrable module:
\[
M(\lambda,J) \cong {\rm Ind}^{U \mathfrak{g}}_{U \mathfrak{p}_J}
V_J(\lambda).
\]

As above, let $\Delta$ denote the positive roots of $\mathfrak{g}$, i.e.\
the roots of $\mathfrak{n}^+$; and let $\Delta_J$ denote the (positive)
roots of $\mathfrak{n}^+_J$ -- these are also the roots of
$\mathfrak{n}^+$ that are $\mathbb{N}$-linear combinations of $\{
\alpha_i : i \in J \}$. Now define
\begin{equation}\label{Eparabolic}
\mathfrak{u}^-_J := \bigoplus_{\beta \in \Delta \setminus \Delta_J}
\mathfrak{n}^-_{-\beta};
\end{equation}
this is a Lie subalgebra of $\mathfrak{g}$ (in fact of $\mathfrak{n}^-$),
spanned by all root spaces $\mathfrak{g}_{-\beta} =
\mathfrak{n}^-_{-\beta}$ such that $\beta$ is an $\mathbb{N}$-linear
combination of simple roots $\alpha_i$ with at least one $i \not\in J$.
For example, in our case of $\mathfrak{g} =
\mathfrak{sl}_{n+1}(\mathbb{C})$,
\[
\mathfrak{u}^-_J = {\rm span}_{\mathbb{C}} \{ E_{ij} : i>j, \ \{ i-1,
\dots, j+1, j \} \nsubseteq J \}.
\]

The PBW theorem gives a vector space isomorphism:
\begin{equation}\label{Egvm-tensor}
M(\lambda, J) \cong_{\mathbb{C}} U (\mathfrak{u}^-_J)
\otimes_{\mathbb{C}} V_J(\lambda),
\end{equation}
and since characters are multiplicative across tensor products, this
yields
\[
\ch M(\lambda,J) = \ch U(\mathfrak{u}^-_J) \cdot \ch V_J(\lambda).
\]

The \textbf{second step} is to note that the latter factor is indeed
log-concave along type $A$ root directions. Indeed, partition the Dynkin
subdiagram $J \subseteq I = [n]$ into disjoint connected components $J =
J_1 \sqcup \cdots \sqcup J_l$; thus each $J_r$ is a contiguous
subinterval, and so $\mathfrak{g}_{J_r} \cong
\mathfrak{sl}_{|J_r|+1}(\mathbb{C})$ for all $r$. The following
decompositions into pairwise commuting summands/factors are now standard:
\[
\mathfrak{g}_J = \oplus_{r=1}^l \mathfrak{g}_{J_r}, \qquad
U (\mathfrak{g}_J) = \bigotimes_{r=1}^l U (\mathfrak{g}_{J_r}),
\]
and the respective Cartan subalgebras satisfy the same relations. Thus
$\lambda = \oplus_{r=1}^l \lambda|_{\mathfrak{h}_r} = (\lambda_1, \dots,
\lambda_l)$, say. Then we also have vector space isomorphisms, across
which $\ch(\cdot)$ is multiplicative:
\begin{equation}\label{Ecommute}
M_{\mathfrak{g}_J}(\lambda) \cong_{\mathbb{C}} \bigotimes_{r=1}^l 
M_{\mathfrak{g}_{J_r}}(\lambda_r), \qquad
V_J(\lambda) \cong_{\mathbb{C}} \bigotimes_{r=1}^l V_{J_r}(\lambda_r);
\end{equation}
moreover, the characters of the tensor factors in the second isomorphism
are polynomials in disjoint sets of variables, as is explained below.

The \textbf{third step} in this part is to compute $\ch
U(\mathfrak{u}^-_J)$. We claim more strongly that it is given by a
restricted Kostant partition function for all $J \subsetneq I$. (Note
that $\mathfrak{u}^-_I = 0$, so $U(\mathfrak{u}^-_I) = \mathbb{C}$.)
To show the claim, fix $J \subsetneq I$ and enumerate $\Delta
\setminus \Delta_J = \{ \beta_1, \dots, \beta_p \}$.
By~\eqref{Eparabolic} and the PBW theorem,
\begin{equation}\label{ErestKPF}
\ch U(\mathfrak{u}_J^-) = \frac{1}{\prod_{r=1}^p (1 - e^{-\beta_r})}.
\end{equation}

In other words, $\dim U(\mathfrak{u}_J^-)_\mu$ is the number of ways to
write $-\mu$ as an $\mathbb{N}$-linear combination of $\beta_1, \dots,
\beta_p$. This is precisely a restricted Kostant partition function
(KPF), as we now define.

\begin{definition}\label{Dkpf}
Let $G$ be a loopless multigraph on the vertices $[n+1]$ with edges
directed from smaller to larger vertices.
Denote by $K_G$ the \textit{restricted Kostant partition function (KPF)},
which takes a vector $v = (v_1, \dots, v_n, v_{n+1}) \in
\mathbb{Z}^{n+1}$ to the number $K_G(v)$ of ways to write $v$ as a sum
of the positive type $A$ roots $\varepsilon_i - \varepsilon_j \in
\mathbb{Z}^{n+1}$ corresponding to edges $(i,j)$ in $G$ (with
multiplicity). For instance if $G$ is the complete (directed simple)
graph, we get the usual/unrestricted KPF $K(v)$.
\end{definition}

Now the claim is shown as follows. Let $\mu = -\sum_{i=1}^n l_i
\alpha_i \in \mathfrak{h}^*$ for some $l_i \in \mathbb{C}$. Then the
space $U(\mathfrak{u}^-_J)_\mu = 0$ unless all $l_i \in \mathbb{N}$. More
strongly, if we define the graph $G_J$ on $[n+1]$ by only including those
edges $i \to j$ for which $i < j$ and $\{ i, i+1, \dots, j-1 \}
\nsubseteq J$, then the discussion around~\eqref{ErestKPF} implies that
(a)~there are exactly $p$ such edges (and no multi-edges) in $G_J$,
say $i_r \to j_r$ for $r \in [p]$;
(b)~up to relabelling, $\beta_r = \varepsilon_{i_r} - \varepsilon_{j_r}$
for all $r$;
and (c)~we have that
\begin{align*}
-\mu = \sum_{i=1}^n l_i \alpha_i = &\ l_1 \varepsilon_1 + (l_2 - l_1)
\varepsilon_2 + \cdots + (l_n - l_{n-1}) \varepsilon_n - l_n
\varepsilon_{n+1}\\
\implies \dim U(\mathfrak{u}^-_J)_\mu = &\ K_{G_J}(l_1, l_2-l_1, \dots,
l_n-l_{n-1},-l_n).
\end{align*}

\subsection{Completing the proof}

We now prove Theorems~\ref{T1} and~\ref{T2}; we follow the approach
in \cite[Proposition~13]{HMMS}. Recall the normalization operator
in~\eqref{Enormal}; and given a tuple $\beta \in \mathbb{N}^m$, let
$\partial^\beta$ denote the $\beta$th partial derivative of polynomials
or power series $p(x) \in \mathbb{R}[[x_1, \dots, x_m]]$, sending a
monomial $x^\mu$ to $\frac{\mu!}{(\mu-\beta)!} x^{\mu-\beta}$. Thus if
$p(x) := \sum_{\mu \geq {\bf 0}} c_\mu x^\mu$, then we have
\begin{equation}\label{Enormal2}
\partial^\beta N(p(x)) = \sum_{\mu \geq \beta} \frac{c_\mu}{\mu!}
\frac{\mu!}{(\mu-\beta)!} x^{\mu-\beta} = N(p(x) \cdot x^{-\beta}).
\end{equation}


\begin{proof}[Proof of Theorem~\ref{T2}]
In \cite{HMMS}, to show Lorentzianity the authors worked with
characters of Verma modules over $\mathfrak{sl}_{n+1}(\mathbb{C})$, and
then translated this into flow polytopes over the complete simple graph
on $[n+1]$. As we now work with arbitrary multigraphs, we adjust the
argument.

Suppose $G$ contains $m_{ij} \geq 0$ edges $i \to j$, for each pair $i<j$
in $[n+1]$. Then the generating function of $K_G(\cdot)$ is
\[
\chG(x_1, \dots, x_{n+1}) = \prod_{j>i} (1 + x_j x_i^{-1} + x_j^2
x_i^{-2} + \cdots)^{m_{ij}};
\]
this is well-defined in the power series ring $\mathbb{R}[[
\frac{x_2}{x_1}, \dots, \frac{x_{n+1}}{x_n} ]]$.

We now show that the expression $N(x^\delta \cdot \chG(x))$ is Lorentzian
for all $\delta \in \mathbb{N}^{n+1}$. Note that only the terms $x^\mu$
with $\mu \geq -\delta$ (coordinatewise) contribute to this expression.
Now choose any positive integers $n_{ij}$ for $i>j$ such that $\delta_i
\leq \sum_{j>i} n_{ij} m_{ij} =: \beta_i$ for all $i \in [n]$, and
compute
\begin{equation}\label{Enormal3}
N(x^\delta \cdot \chG(x)) = N \left( x^\delta \prod_{j>i}
(x_j^{n_{ij}} + x_i x_j^{n_{ij}-1} + \cdots + x_i^{n_{ij}})^{m_{ij}}
\cdot x^{-\beta} \right).
\end{equation}
Define the homogeneous polynomial
\[
p(x) := x^\delta \prod_{j>i} (x_j^{n_{ij}} + x_i x_j^{n_{ij}-1} + \cdots
+ x_i^{n_{ij}})^{m_{ij}}.
\]
Since each polynomial factor in this product (without the exponent of
$m_{ij}$) as well as $x^\delta$ has a Lorentzian normalization, $N(p(x))$
is Lorentzian by Theorem~\ref{thm:BHdoubleLC} (3). As taking partial
derivatives preserves the Lorentzian property, \eqref{Enormal2} yields
that $N(x^\delta \cdot \chG(x))$ is also Lorentzian,
via~\eqref{Enormal3}, 
Again using Theorem~\ref{thm:BHdoubleLC}, we obtain both the continuous
log-concavity of $N(x^\delta \cdot \chG(x))$ and the discrete
log-concavity of $\chG(x)$.
\end{proof}

\begin{proof}[Proof of Theorem~\ref{T1}]
From~\eqref{Egvm-tensor} and~\eqref{Ecommute} we know that
\[
\ch M(\lambda, J) = \ch U(\mathfrak{u}^-_J) \prod_{r=1}^l \ch
V_{J_r}(\lambda_r),
\]
and as discussed above, $\ch U(\mathfrak{u}^-_J)$ is the generating
function of the restricted KPF $K_{G_J}$ (with $G_J$ introduced after
Definition~\ref{Dkpf}). We now follow the proof of Theorem~\ref{T2} (as
one cannot directly apply it). Let $G_J$ contain $m_{ij} \in \{ 0, 1 \}$
edges $i \to j$ for $i<j$ in $[n+1]$. Given $\delta \in
\mathbb{N}^{n+1}$, choose $n_{ij}$ and define $\beta_i$ as in the
preceding proof, and compute as in~\eqref{Enormal3}:
\[
N(x^\delta \cdot \ch M(\lambda,J)) = N \left( x^\delta \prod_{j>i}
(x_j^{n_{ij}} + x_i x_j^{n_{ij}-1} + \cdots + x_i^{n_{ij}})^{m_{ij}}
\cdot \prod_{r=1}^l \ch V_{J_r}(\lambda_r) \cdot x^{-\beta} \right).
\]
The factors in the second product have Lorentzian normalizations by
\cite[Theorem~3]{HMMS}, as do the factors in the first product as well as
$x^\delta$. As in the proof of Theorem~\ref{T2}, it follows
using~\eqref{Enormal2} and Theorem~\ref{thm:BHdoubleLC} (3) that
$N(x^\delta \cdot \ch M(\lambda,J))$ is also Lorentzian. In turn, this
yields both the continuous log-concavity of $N(x^\delta \cdot \ch
M(\lambda,J))$ and the discrete log-concavity of $\ch M(\lambda,J)$
 by Theorem~\ref{thm:BHdoubleLC}.
\end{proof}

\section{Alternative approach to discrete log-concavity, via flow polytopes}\label{Sflow}

We now explain an alternative way of proving the discrete
log-concavity in Theorem~\ref{T2}: using flow polytopes. As above, we
start with a quick introduction to flow polytopes; the interested reader
may see \cite{vol_ehr_flow_polys} for a more thorough and general
treatment.

\subsection{Flow polytopes and Kostant partition functions}

By convention, we will use \emph{graph} to mean a loopless directed
finite multigraph on a labeled vertex set $[n+1]$ with edges directed from
$i$ to $j$ when $i<j$ (hence acyclic).

Let $G$ be a graph on vertex set $[n+1]$. For $a=(a_1,\ldots,a_n)\in
\mathbb{R}^n$, an \emph{$a$-flow} on $G$ is a function
$f\colon E(G) \to \mathbb{R}_{\geq 0}$ such that the
\emph{flow conservation} condition
\[
\sum_{e=(i',i)\in E(G)}f(e) + a_i = \sum_{e=(i,i')\in E(G)} f(e)
\]
holds for each $i\in [n]$. Note that summing these $n$ equations and
simplifying gives
\[
\sum_{e=(i,n+1)\in E(G)}f(e) = -\sum_{i=1}^n a_i.
\]
In other words, flow conservation at $i=n+1$ is implied,
when one completes the flow vector $a$ to include an additional coordinate
such that the $n+1$ coordinates sum to zero.

\begin{definition}
For any $a \in \mathbb{R}^n$, the \emph{flow polytope} $\mathcal{F}_G(a)$
of $G$ is the set of $a$-flows on $G$. 
\end{definition}

We denote by $k$ the number of edges of $G$ (with multiplicity).
By fixing an integral equivalence between
the affine span of $\mathcal{F}_G(a)$ and
$\mathbb{R}^{k-n}$, we may view
$\mathcal{F}_G(a)$ as a full-dimensional polytope in
$\mathbb{R}^{k-n}$ instead of a polytope in $\mathbb{R}^{E(G)}$
when convenient.

We now explain the connection between Ehrhart theory of integral flow
polytopes and the restricted Kostant partition functions. Given $G$ as
above, let $A_G$ be the $(n+1)\times k$ matrix with a column
$\varepsilon_i-\varepsilon_j$ for each edge $e=(i,j)$ in $G$ (with
multiplicity). A straightforward check shows that for any completed flow
vector $\widetilde{a} = (a_1,\ldots,a_n,-\sum_{i=1}^n a_i)$,
\[
\mathcal{F}_G(a_1,\ldots,a_n) = \left\{f\in \mathbb{R}^k_{\geq 0} : 
A_G f = \widetilde{a}^T \right\}.
\]
In particular, the number of integer points in
$\mathcal{F}_G(a_1,\ldots,a_n)$ is exactly
$K_G\left(a_1,\dots,a_n,-\sum_{i=1}^na_i\right)$.

\begin{example}
If $G$ is the complete graph on vertices $[n+1]$, then $k=\binom{n+1}{2}$
is the number of type $A$ positive roots, and the number of integer
points in $\mathcal{F}_G(a_1, \dots, a_n)$ equals the usual Kostant
partition function $K(a_1, \dots, a_n, -\sum_{i=1}^n a_i)$.
\end{example}

Remarkably, volumes of flow polytopes are also given by Kostant partition
functions. The following formula was proved by Baldoni and Vergne in
\cite{BV} using residue techniques. It was subsequently reproved by
Postnikov and Stanley in unpublished work \cite{r_stan_slides}, and again
by M\'esz\'aros and Morales in \cite{vol_ehr_flow_polys} via an explicit
subdivision. We use the notation and formulation of
\cite{vol_ehr_flow_polys} below. 

Recall the dominance order (or weak majorization) on $\mathbb{R}^n$ is
given by: $a$ dominates $b$ if $a_1+\cdots+a_i \geq b_1+\cdots+b_i$ for
each $i\in[n]$.

\begin{theorem}[Lidskii volume formula {\cite[Theorem
38]{BV}}]\label{thm:lidskii}
Let $G$ be a graph on $[n+1]$ with $k$ edges (directed from smaller to
larger vertices). Suppose that each vertex $i\in [n]$ has at least one
outgoing edge. Then for any $a_1,\ldots,a_n\geq 0$,
\[
\mathrm{Vol}(\mathcal{F}_G(a_1,\ldots,a_n)) = \sum_r (k-n)! \,
K_{G}\left(r_1-o^G_1, \ldots, r_n -
o^G_n,0\right)\frac{a_1^{r_1}}{r_1!} \cdots \frac{a_n^{r_n}}{r_n!}
\]
where $o^G_i=\mathrm{outdeg}_G(i)-1$, and the sum is over weak
compositions $r=(r_1,r_2,\ldots,r_n)$ of $k-n$ that are $\geq$
$o^G := (o^G_1,\ldots,o^G_n)$ in dominance order.
\end{theorem}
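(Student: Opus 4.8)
I would first reduce the volume identity to a lattice-point computation. The matrix $A_G$ is a column-submatrix of the (totally unimodular) vertex--edge incidence matrix of the acyclic graph $G$, so $\mathcal{F}_G(a)$ is a lattice polytope whenever $a\in\mathbb{Z}^n$, and by the discussion above its number of lattice points equals $L_G(a):=K_G(a_1,\dots,a_n,-\sum_i a_i)$. Hence $L_G$ is an honest polynomial of degree $k-n$ in the $a_i$, and $\mathrm{Vol}(\mathcal{F}_G(a))$ -- the lattice-normalized volume, which is the source of the prefactor $(k-n)!$ -- equals $(k-n)!$ times the top-degree homogeneous part of $L_G$. So it suffices to show that, writing this top part as $\sum_r c_r\,\frac{a_1^{r_1}}{r_1!}\cdots\frac{a_n^{r_n}}{r_n!}$ over weak compositions $r$ of $k-n$, one has $c_r=K_G(r_1-o^G_1,\dots,r_n-o^G_n,0)$. (Note $K_G(r-o^G,0)$ vanishes unless $r\geq o^G$, since a nonzero value forces every prefix sum $\sum_{i\le t}(r_i-o^G_i)$ to be nonnegative; this is why the sum in the statement is restricted to $r\geq o^G$.)

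For this, my primary route would be generating functions. By Theorem~\ref{T2}, $K_G$ is read off the generating function $\chG(x)=\prod_{e=(i,j)\in E(G)}(1-x_i^{-1}x_j)^{-1}$, so $K_G(a_1,\dots,a_n,-\sum_i a_i)$ is a specific coefficient of $\chG$, which I would extract by iterated residues (contour integration) in the variables $x_n,x_{n-1},\dots,x_1$, in the order adapted to the acyclic orientation $1<\cdots<n+1$. Each step involves only the factors attached to edges incident to a single vertex $i$; since $i$ carries $\mathrm{outdeg}_G(i)$ outgoing edges, the $x_i$-residue contributes (at top degree in $a_i$) a factor $\frac{a_i^{r_i}}{r_i!}$ for each choice of a local exponent $r_i$, together with a shift of the surviving rational function by $o^G_i=\mathrm{outdeg}_G(i)-1$; after all $n$ steps the leftover function is again a $\chG$-type product whose relevant coefficient is $K_G(r-o^G,0)$, and the iterated residue is nonzero precisely when $r$ dominates $o^G$.

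An alternative route, cleaner to state but with more geometry to verify, is a polyhedral subdivision. Fixing $a$ in the interior of a chamber, I would build the Danilov--Karzanov--Koshevoy ``framed'' subdivision of $\mathcal{F}_G(a)$: repeatedly apply the reduction move that replaces a length-two path $(i,j),(j,l)$ through an internal vertex $j$ by the three edges $(i,j),(j,l),(i,l)$, each move cutting the flow polytope into two pieces glued along $\{f(i,j)=f(j,l)\}$; when no reducible path remains one is left with a subdivision whose cells are, up to unimodular equivalence, products of simplices times residual ``constant-net-flow'' flow polytopes. Summing lattice-point counts cell by cell -- the cells indexed by the compositions $r\geq o^G$, with $K_G(r-o^G,0)$ counting the residual piece and the $\prod_i\frac{a_i^{r_i}}{r_i!}$-type contributions coming from the simplex factors -- then yields the formula.

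The main obstacle in Route 1 is the residue bookkeeping: one must show that the partial-fraction terms spawned along the way collapse to a single clean sum over $\{r\geq o^G\}$, with no spurious survivors and with exactly the right KPF argument and factorials; this identification is where the content sits, the reduction to lattice points and the top-degree extraction being routine. In Route 2 the corresponding difficulty is proving that the reduction moves genuinely assemble into a subdivision -- that the pieces tile $\mathcal{F}_G(a)$ without overlap and carry the expected lattice structure -- and that the resulting cell index set is exactly the set of compositions dominating $o^G$. Since this paper already manipulates $\chG$ and its shifts with ease, I would pursue Route 1 first.
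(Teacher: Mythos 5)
The paper does not prove this theorem; it is stated as a citation. The surrounding discussion attributes the result to Baldoni--Vergne \cite{BV} (proved via iterated-residue techniques), to unpublished work of Postnikov--Stanley \cite{r_stan_slides}, and to M\'esz\'aros--Morales \cite{vol_ehr_flow_polys} (proved via an explicit polyhedral subdivision). Your two proposed routes match these literature proofs exactly: Route~1 is the Baldoni--Vergne residue calculus, and Route~2 is the Danilov--Karzanov--Koshevoy-style reduction-tree subdivision used by M\'esz\'aros--Morales. So there is no internal argument to compare against, but your outline is faithful to how the result is actually proved.

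One step in your reduction is stated incorrectly and needs repair. You assert that $L_G(a):=K_G(a_1,\dots,a_n,-\sum_i a_i)$ ``is an honest polynomial of degree $k-n$ in the $a_i$.'' Total unimodularity of $A_G$ guarantees only that this vector partition function is \emph{piecewise} polynomial: it agrees with a polynomial on each chamber of the chamber complex of the cone generated by the columns of $A_G$, but is generally not a single polynomial across the nonnegative orthant. The reduction survives with more care: $\mathrm{Vol}(\mathcal{F}_G(a))$ genuinely is a polynomial in $a$ on $\mathbb{R}_{\geq 0}^n$ (by Proposition~\ref{prop:minksumdecomp} and the mixed-volume expansion), and on any one chamber it equals $(k-n)!$ times the top-degree homogeneous part of the chamber polynomial for $L_G$; since a polynomial is determined by its values on an open set, this identifies the volume polynomial. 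You should phrase the reduction that way rather than asserting polynomiality of $L_G$. Beyond that, as you candidly note, the residue bookkeeping (or, in Route~2, the verification that the reduction moves assemble into a genuine subdivision with the stated cell-index set) is where the entire content of the theorem lives, and your sketch leaves it open---that step is precisely what \cite[Theorem~38]{BV} and \cite{vol_ehr_flow_polys} carry out in detail.
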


From the flow conservation condition, one can observe that whenever $a$
does not dominate the zero vector $G$ admits no $a$-flows. In this case, 
clearly $K_G(a)=0$. Hence the condition that $r$ dominates $o^G$ above
can be dropped from the sum if desired. Also note the additional
requirement above that $a_1,\ldots,a_n\geq 0$ for this volume formula,
which is not required in the definition of flow polytopes.

We conclude this foray into flow polytopes with a useful property
required later: the flow polytopes considered in Theorem
\ref{thm:lidskii} admit a Minkowski sum decomposition into simpler flow
polytopes (see for instance \cite[Proposition 2.1]{vol_ehr_flow_polys}).

\begin{proposition}\label{prop:minksumdecomp}
For any graph $G$ on $[n+1]$ and any $a_1,\ldots,a_n\geq 0$,
\[
\mathcal{F}_G(a)=\sum_{i=1}^n a_i
\mathcal{F}_G(\varepsilon_i).
\]
\end{proposition}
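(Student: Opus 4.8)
The plan is to prove the set equality by establishing the two inclusions separately. The inclusion $\sum_{i=1}^n a_i \mathcal{F}_G(\varepsilon_i) \subseteq \mathcal{F}_G(a)$ is the routine one: in the setting of Theorem~\ref{thm:lidskii} every vertex of $G$ has an outgoing edge, so tracing outgoing edges from $i$ and using acyclicity of $G$ produces a directed path from $i$ to $n+1$; one unit of flow along it is an $\varepsilon_i$-flow, hence each $\mathcal{F}_G(\varepsilon_i)$ is nonempty. Given $g^{(i)} \in \mathcal{F}_G(\varepsilon_i)$, the combination $f := \sum_{i=1}^n a_i g^{(i)}$ satisfies $f \geq 0$ on $E(G)$ since $a_i \geq 0$, and in the incidence-matrix formulation $A_G f = \sum_{i} a_i A_G g^{(i)} = \sum_i a_i(\varepsilon_i - \varepsilon_{n+1})^T = \widetilde{a}^T$, so $f \in \mathcal{F}_G(a)$.

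For the reverse inclusion $\mathcal{F}_G(a) \subseteq \sum_{i=1}^n a_i \mathcal{F}_G(\varepsilon_i)$, I would decompose an arbitrary $a$-flow into path flows. Let $f \in \mathcal{F}_G(a)$, viewed as a nonnegative flow with net divergence $\widetilde{a} = (a_1,\dots,a_n,-\sum_i a_i)$. Since $G$ is acyclic and, crucially, $a_i \geq 0$ for all $i \in [n]$, the vertex $n+1$ is the only vertex of negative divergence, so the classical flow decomposition theorem writes $f = \sum_k c_k \chi_{P_k}$ with $c_k > 0$ and each $P_k$ a directed path from some source $s_k \in [n]$ (with $a_{s_k} > 0$) to $n+1$, satisfying $\sum_{k \,:\, s_k = i} c_k = a_i$ for every $i$. (One can instead argue this by induction on the number of edges carrying positive flow: pick $i$ with $a_i>0$, trace a positive-flow directed path $P$ from $i$ to $n+1$, subtract $\min\bigl(a_i,\min_{e \in P} f(e)\bigr)\cdot \chi_P$, and recurse, keeping all supplies nonnegative.) Now for $i$ with $a_i>0$ put $f^{(i)} := a_i^{-1}\sum_{k \,:\, s_k=i} c_k \chi_{P_k} \geq 0$; its divergence is $a_i^{-1}\bigl(\sum_{k:s_k=i}c_k\bigr)(\varepsilon_i - \varepsilon_{n+1}) = \varepsilon_i - \varepsilon_{n+1}$, so $f^{(i)} \in \mathcal{F}_G(\varepsilon_i)$, and for $i$ with $a_i = 0$ choose $f^{(i)} \in \mathcal{F}_G(\varepsilon_i)$ arbitrarily. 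Then $\sum_i a_i f^{(i)} = \sum_k c_k \chi_{P_k} = f$, exhibiting $f$ in the Minkowski sum.

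The main obstacle is precisely this reverse inclusion — producing a path decomposition of $f$ that respects the per-vertex supplies $a_i$ — and it is exactly here that the two standing hypotheses are used: acyclicity of $G$ guarantees no cyclic components appear in the decomposition, and the sign condition $a_1,\dots,a_n \geq 0$ forces $n+1$ to be the unique sink, so that every path in the decomposition originates at a single vertex $i$ and contributes to an $\varepsilon_i$-flow. Without $a_i \geq 0$ an internal vertex could act as a sink and the decomposition into $\varepsilon_i$-flows would break down. Everything else is bookkeeping with the matrix $A_G$ and the completed supply vector $\widetilde{a}$.
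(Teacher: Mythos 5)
Your proof is correct. Note first that the paper itself does not supply a proof of Proposition~\ref{prop:minksumdecomp}; it is attributed to \cite[Proposition~2.1]{vol_ehr_flow_polys}, so there is no in-paper argument to compare against. Your two-inclusion strategy is the standard (and, as far as one can tell, the intended) one. The easy inclusion is correctly reduced to linearity of $A_G$ together with nonnegativity of the $a_i$. For the nontrivial inclusion, the path decomposition of an $a$-flow works exactly as you describe: acyclicity rules out cyclic components; the sign condition $a_i\geq 0$ for $i\in[n]$ makes $n+1$ the unique sink, so each path originates at some $i$ with $a_i>0$; and the greedy recursion (pick $i$ with $a_i>0$, trace a positive-flow path $P$ to $n+1$ using acyclicity and $a_j\geq 0$ at intermediate vertices, subtract $\min\bigl(a_i,\min_{e\in P}f(e)\bigr)\chi_P$) strictly decreases the number of nonzero quantities among $\{a_1,\dots,a_n\}\cup\{f(e)\}_{e\in E(G)}$, hence terminates, at which point a zero-divergence nonnegative flow on an acyclic graph is identically zero. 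Grouping by source and rescaling then places each piece in $\mathcal{F}_G(\varepsilon_i)$, as you say.

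One small point worth making explicit: as stated, the Proposition claims the identity ``for any graph $G$,'' whereas your construction of $f^{(i)}\in\mathcal{F}_G(\varepsilon_i)$ when $a_i=0$ uses the hypothesis from Theorem~\ref{thm:lidskii} that every $i\in[n]$ has an outgoing edge. Without that hypothesis, $\mathcal{F}_G(\varepsilon_i)$ could be empty and the Minkowski sum would then be empty even when $\mathcal{F}_G(a)$ is not. You are right to invoke the hypothesis --- the Proposition is used only in the Lidskii setting --- but this should be flagged as a genuine additional assumption (or one should adopt the convention that scaling by zero yields $\{0\}$, under which the $a_i=0$ terms may simply be dropped).
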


\subsection{Mixed volumes of polytopes and the Alexandrov--Fenchel
inequality}

Let $P_1,\ldots, P_n$ be polytopes in $\mathbb{R}^k$ and fix real weights
$a_1, \ldots, a_n \geq 0$. Set $P$ to be the Minkowski sum
\[
P = a_1 P_1 + \cdots + a_n P_n.
\]

By classical results on convex sets (see for instance \cite[Theorem
5.2.39]{convexity_book}) the volume $\mathrm{Vol}(P)$ of $P$ is a
homogeneous polynomial of degree $k$ in $a_1, \ldots, a_n$:
\[
\mathrm{Vol}(P) = \sum_{s_1=1}^n \sum_{s_2=1}^n \cdots \sum_{s_k=1}^n
V(P_{s_1}, \dots, P_{s_k}) a_{s_1} \cdots a_{s_k}.
\]

The coefficients $V(P_{s_1}, \dots, P_{s_k})$ are uniquely determined by
requiring that they be symmetric up to permutations of arguments. The
number $V(P_{s_1}, \dots, P_{s_k})$ is called the \emph{mixed volume}
of $P_{s_1},\ldots,P_{s_k}$.

We will represent mixed volumes with the notation
\[
V(P_1^{r_1},\ldots,P_n^{r_n}) :=
V\left(\underbrace{P_1,\ldots,P_1}_{r_1}, \ \ldots ,\ 
\underbrace{P_n,\ldots,P_n}_{r_n}\right).
\]
Then
\begin{align*}
\mathrm{Vol}(P)&=\sum_{\substack{r_1,\ldots,r_n \geq
0\\r_1+\cdots+r_n=k}} \binom{k}{r_1,\ldots,r_n} V\left(P_1^{r_1}, \ldots,
P_n^{r_n}\right)a_1^{r_1}\cdots a_n^{r_n}\\
&=\sum_{\substack{r_1,\ldots,r_n\geq 0\\r_1+\cdots+r_n=k}}
k!\, V\left(P_1^{r_1},\ldots,P_n^{r_n}\right)\frac{a_1^{r_1}}{r_1!}\cdots
\frac{a_n^{r_n}}{r_n!}.
\end{align*}

We will derive log-concavity of the characters of parabolic Verma modules
from the Alexandrov--Fenchel inequalities, a fundamental result in convex
geometry proved independently by Alexandrov in \cite{alexandrov} and
Fenchel in \cite{fenchel1,fenchel2}. These inequalities state that mixed
volumes are discretely log-concave, and have been used to derive many
instances of discrete log-concavity in combinatorics (for a survey, see
\cite{stanley_lc_survey}).

\begin{theorem}[Alexandrov--Fenchel inequalities]\label{thm:afinequality}
Fix $i,j\in [n]$ with $i<j$. Then for any integers $r_1,\ldots,r_n \in
\mathbb{N}$ with $r_i, r_j \geq 1$,
\[
V(P_1^{r_1},\ldots, P_n^{r_n})^2\geq
V(P_1^{r_1},\ldots,P_{i}^{r_i+1},\ldots,P_{j}^{r_j-1},\ldots, P_n^{r_n})
V(P_1^{r_1},\ldots,P_{i}^{r_i-1},\ldots,P_{j}^{r_j+1},\ldots, P_n^{r_n}).
\]
\end{theorem}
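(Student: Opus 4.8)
The Alexandrov--Fenchel inequality is a classical theorem of convex geometry; for the polytopal setting of interest here, the plan is to follow the proof via toric varieties and the Hodge index theorem. First I would reduce to the case where all of $P_1,\ldots,P_n$ are lattice polytopes for a fixed lattice: rescaling every $P_s$ by a large positive integer multiplies both sides of the claimed inequality by the same positive constant, and an arbitrary polytope is a Hausdorff limit of rational ones, so by continuity of mixed volumes the lattice case implies the general one. Using the symmetry and multilinearity of $V(\cdot)$, I would then reduce further to the single inequality
\[
V(K,L,Q_1,\ldots,Q_{n-2})^2 \;\geq\; V(K,K,Q_1,\ldots,Q_{n-2})\,V(L,L,Q_1,\ldots,Q_{n-2}),
\]
where $K = P_i$, $L = P_j$, and $Q_1,\ldots,Q_{n-2}$ is the multiset $P_1^{r_1},\ldots,P_n^{r_n}$ with one copy each of $P_i$ and $P_j$ removed.

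Next I would pass to a smooth complete refinement $\Sigma$ of the normal fans of $K,L,Q_1,\ldots,Q_{n-2}$ and to its toric variety $X = X_\Sigma$, together with the nef, globally generated, torus-invariant Cartier divisor classes $D_K, D_L, D_{Q_1},\ldots,D_{Q_{n-2}}$ whose polytopes of sections are the given ones; under the toric dictionary, $V(R_1,\ldots,R_n) = \frac{1}{n!}(D_{R_1}\cdots D_{R_n})$ as an intersection number on $X$. Replacing each $D_{Q_t}$ by a large multiple and repeatedly applying Bertini's theorem in characteristic zero, I would cut $X$ down to a smooth projective surface $S$; the restrictions $d_K, d_L$ of $D_K, D_L$ to $S$ are nef, and $d_K\cdot d_L$, $d_K^2$, $d_L^2$ equal the three mixed volumes above up to a common positive factor. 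The Hodge index theorem then says that the intersection form on $\mathrm{NS}(S)\otimes\mathbb{R}$ has exactly one positive eigenvalue; since $d_K, d_L$ are nef one has $d_K^2 \geq 0$ and $d_L^2 \geq 0$, and either $d_K^2 = 0$ (whence the right side vanishes and the inequality is immediate from $(d_K\cdot d_L)^2 \geq 0$) or $d_K^2 > 0$, in which case the restriction of the form to $\mathrm{span}(d_K,d_L)$ has signature with one positive eigenvalue and hence the $2\times 2$ Gram determinant satisfies $d_K^2\,d_L^2 - (d_K\cdot d_L)^2 \leq 0$. Unwinding the dictionary gives the inequality for lattice polytopes, and density finishes the argument.

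The part I expect to be most delicate is the geometric reduction to a surface: one must arrange the refinement $\Sigma$ and the multiples of the $D_{Q_t}$ so that every divisor involved is genuinely base-point-free (which is available, since nef equals globally generated on a complete toric variety), and then run Bertini repeatedly so that each successive hyperplane section is smooth and irreducible, while checking that the intersection numbers on the final surface reproduce the intended mixed volumes with no spurious contributions. An alternative that avoids algebraic geometry is Alexandrov's original analytic proof: approximate by $C^\infty$ strictly convex bodies, express mixed volumes through support functions and the mixed discriminant of their Hessians, and induct on the dimension using the Alexandrov--Fenchel inequality for mixed discriminants of positive-definite symmetric matrices together with a continuity-of-eigenvalues argument -- there the inductive step and the return from smooth bodies to polytopes are the subtle points. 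A third option is simply to cite the recent self-contained convex-geometric proof of Shenfeld--van Handel.
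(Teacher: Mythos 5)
The paper does not prove this statement: it is cited as a classical background result, attributed to Alexandrov and Fenchel, and used as a black box in the proof of Proposition~\ref{prop:mixedvolumekostant} and the subsequent argument. So there is no proof of the paper's to compare against; your task here is effectively to supply a proof sketch of a well-known external theorem.

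Your outline is the standard toric/Hodge-index proof, and it is correct as a sketch. The reduction of the general $r$-indexed inequality to the three-term case $V(K,L,Q_1,\ldots,Q_{n-2})^2\geq V(K,K,Q_\bullet)\,V(L,L,Q_\bullet)$ is exactly right (take $K=P_i$, $L=P_j$, and $Q_\bullet$ the remaining multiset), and the passage to lattice polytopes by rescaling and density is legitimate because mixed volumes are continuous in the Hausdorff metric and homogeneous of degree one in each slot. The toric dictionary, the cut-down to a surface by Bertini after replacing the $D_{Q_t}$ by suitable multiples, and the use of the Hodge index theorem (the restriction of a form of signature $(1,\rho-1)$ to any subspace has at most one positive eigenvalue, so the $2\times 2$ Gram determinant of two nef classes is $\leq 0$ once one of them has positive self-intersection) are all sound. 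Two technical points you should be aware of if you were to write this out in full: (i)~some $P_s$ may be lower-dimensional, so their ``normal fans'' are not complete; the usual fix is to fatten by a small simplex or ball, or to build the fan from the Minkowski sum, and your density argument also covers this; (ii)~one must check at each Bertini step that the general section is smooth, connected, and that the intersection numbers restrict as intended, which holds because the $D_{Q_t}$ are globally generated (nef equals globally generated on a complete toric variety) and one works in characteristic zero. You also correctly flag Alexandrov's analytic proof and the Shenfeld--van~Handel proof as alternatives; the latter is indeed the cleanest route if one wants a self-contained convex-geometric treatment. In short, your sketch is accurate; the paper itself simply cites the result.
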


The equality conditions of Theorem \ref{thm:afinequality} remain a major open problem, with recent advancements made in \cite{geom_af_inequalities,af_equality_complexity}.

\subsection{Discrete log-concavity of restricted Kostant partition
functions}

We can finally finish the alternative proof of the first part of
Theorem~\ref{T2}. We need one last intermediate result. For a graph $G$,
recall the numbers $o^G_i=\mathrm{outdeg}_G(i)-1$. The following result
is an easy consequence of the Lidskii volume formula.

\begin{proposition}\label{prop:mixedvolumekostant}
Let $G$ be a graph on vertices $[n+1]$ with $k$ edges and at least one
outgoing edge from each vertex $i\in [n]$. Then for any weak composition
$r=(r_1,\ldots,r_n)$ of $k-n$,
\[
V \left(\mathcal{F}_G(\varepsilon_1)^{r_1}, \ldots,
\mathcal{F}_G(\varepsilon_n)^{r_n} \right) = K_{G} \left( r_1-o^G_1,
\ldots, r_n - o^G_n, 0 \right).
\]
\end{proposition}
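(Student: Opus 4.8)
The plan is to read off Proposition~\ref{prop:mixedvolumekostant} directly by comparing two expansions of the same polynomial in $a_1,\ldots,a_n$: the Lidskii volume formula (Theorem~\ref{thm:lidskii}) and the Minkowski-sum / mixed-volume expansion. First I would invoke Proposition~\ref{prop:minksumdecomp} to write $\mathcal{F}_G(a) = \sum_{i=1}^n a_i\,\mathcal{F}_G(\varepsilon_i)$ for $a_1,\ldots,a_n \geq 0$, so that $\mathrm{Vol}(\mathcal{F}_G(a))$ is the volume of a Minkowski combination of the fixed polytopes $P_i := \mathcal{F}_G(\varepsilon_i)$. By the classical mixed-volume expansion recalled just above Theorem~\ref{thm:afinequality}, this volume equals
\[
\sum_{\substack{r_1,\ldots,r_n \geq 0\\ r_1+\cdots+r_n = k-n}}
(k-n)!\, V\!\left(\mathcal{F}_G(\varepsilon_1)^{r_1},\ldots,\mathcal{F}_G(\varepsilon_n)^{r_n}\right)
\frac{a_1^{r_1}}{r_1!}\cdots\frac{a_n^{r_n}}{r_n!},
\]
where $k-n = \dim \mathcal{F}_G(a)$ is the degree; note the hypothesis that each vertex $i \in [n]$ has an outgoing edge guarantees $\mathcal{F}_G(\varepsilon_i)$ is nonempty of the expected dimension, so $k-n$ is indeed the right degree.

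Next I would write down the Lidskii formula from Theorem~\ref{thm:lidskii} for the same quantity:
\[
\mathrm{Vol}(\mathcal{F}_G(a)) = \sum_{r}(k-n)!\,
K_G\!\left(r_1 - o^G_1,\ldots,r_n - o^G_n,0\right)
\frac{a_1^{r_1}}{r_1!}\cdots\frac{a_n^{r_n}}{r_n!},
\]
the sum over weak compositions $r$ of $k-n$. Since both are polynomial identities in the independent variables $a_1,\ldots,a_n$ valid on the full orthant $a_i \geq 0$ (hence as polynomials), I would equate coefficients of $\frac{a_1^{r_1}}{r_1!}\cdots\frac{a_n^{r_n}}{r_n!}$ for each weak composition $r$ of $k-n$. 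Cancelling the common factor $(k-n)!$ yields exactly
\[
V\!\left(\mathcal{F}_G(\varepsilon_1)^{r_1},\ldots,\mathcal{F}_G(\varepsilon_n)^{r_n}\right)
= K_G\!\left(r_1 - o^G_1,\ldots,r_n - o^G_n,0\right),
\]
which is the claim. For weak compositions $r$ not dominating $o^G$ the right-hand side is zero (as noted after Theorem~\ref{thm:lidskii}, since such a completed vector does not dominate $\mathbf{0}$), matching the fact that the Lidskii sum's support is restricted to $r \geq o^G$; the mixed volume on the left is automatically zero there too, so the identity holds for all weak compositions $r$ as stated.

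The only real subtlety — and the step I would be most careful about — is making sure the degree bookkeeping and the normalization constants line up: that the polynomial degree in Theorem~\ref{thm:lidskii} is $k-n$ (the number of edges minus $n$, i.e.\ $\dim\mathcal{F}_G$), that the multinomial coefficients in the mixed-volume expansion are packaged precisely as $(k-n)!/(r_1!\cdots r_n!)$ matching the $\frac{a_i^{r_i}}{r_i!}$ form used in the statement of Theorem~\ref{thm:lidskii}, and that Proposition~\ref{prop:minksumdecomp} applies verbatim (it does, with the same hypothesis $a_i \geq 0$). Once these match, the proof is a one-line coefficient comparison; there is no analytic obstacle, since both formulas are already quoted. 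I would also remark that this Proposition, combined with the Alexandrov--Fenchel inequalities (Theorem~\ref{thm:afinequality}) applied to $P_i = \mathcal{F}_G(\varepsilon_i)$, immediately gives the discrete log-concavity assertion of Theorem~\ref{T2}: the inequality $V(P_1^{r_1},\ldots,P_n^{r_n})^2 \geq V(\cdots P_i^{r_i+1}\cdots P_j^{r_j-1}\cdots)V(\cdots P_i^{r_i-1}\cdots P_j^{r_j+1}\cdots)$ translates directly into $K_G(v)^2 \geq K_G(v+\varepsilon_i-\varepsilon_j)K_G(v+\varepsilon_j-\varepsilon_i)$ after the substitution $v = (r_1-o^G_1,\ldots,r_n-o^G_n,0)$ and a reduction (via adding a dummy sink vertex, or translating $v$) to the case where every vertex has an outgoing edge.
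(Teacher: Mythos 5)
Your proposal is correct and follows essentially the same route as the paper's proof: identify $P_i = \mathcal{F}_G(\varepsilon_i)$, apply Proposition~\ref{prop:minksumdecomp} to get the Minkowski decomposition, expand the volume two ways (mixed-volume expansion and the Lidskii formula), and equate coefficients. Your justification for equating coefficients (a polynomial identity on the full orthant) is the same as the paper's invocation of Zariski density, just phrased differently.
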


\begin{proof}
For each $i\in [n]$, set $P_i = \mathcal{F}_G(\varepsilon_i)$ viewed as a
polytope in $\mathbb{R}^{k-n}$. For any $a_1,\ldots,a_n\geq 0$,
Proposition \ref{prop:minksumdecomp} implies
\begin{align*}
\mathrm{Vol}(\mathcal{F}_G(a_1,\ldots,a_n)) &=
\mathrm{Vol}(a_1P_1+\cdots+a_nP_n)\\
&=\sum_{\substack{r_1,\ldots,r_n\geq 0\\r_1+\cdots+r_n=k-n}}
(k-n)! \, V\left(P_1^{r_1},\ldots,P_n^{r_n}\right)\frac{a_1^{r_1}}{r_1!}
\cdots \frac{a_n^{r_n}}{r_n!}.
\end{align*}

From Theorem \ref{thm:lidskii} and the remark thereafter, we obtain
\begin{align*}
\mathrm{Vol}(\mathcal{F}_G(a_1,\ldots,a_n))
&=\sum_{\substack{r_1,\ldots,r_n\geq 0\\r_1+\cdots+r_n=k-n}}
(k-n)! \, K_{G}\left(r_1-o^G_1, \ldots, r_n -
o^G_n,0\right)\frac{a_1^{r_1}}{r_1!}\cdots \frac{a_n^{r_n}}{r_n!}.
\end{align*}

By Zariski density, comparing these two volume formulas yields
\[
V\left(P_1^{r_1},\ldots,P_n^{r_n}\right)=K_{G}\left(r_1-o^G_1, \ldots,
r_n - o^G_n,0\right).\qedhere
\]
\end{proof}

\noindent With the above analysis at hand, we can now show:

\begin{proof}[Proof of the discrete log-concavity in Theorem~\ref{T2}]
Fix any $v\in \mathbb{Z}^{n+1}$. First note that if $v_{n+1}\neq
-(v_1+\cdots+v_n)$, then both sides of the inequality are zero and
there is nothing to prove. We assume that $v_1+\cdots+v_{n+1}=0$.
Choose an integer $B>|v_1|+\cdots+|v_n|+|v_{n+1}|+n+1$. Let
$\widetilde{v}\in \mathbb{Z}^{B+1}$ denote $v$ with $B-n$ trailing zeros
appended. Set $H$ to be the graph on $[B+1]$ obtained by starting with
$G$ and connecting each new vertex $i>n+1$ to all smaller vertices.
Direct all edges from smaller to larger vertices as usual.

Observe that
\begin{align*}
K_H(\widetilde{v}) &= K_G(v),\\
K_H(\widetilde{v} + \widetilde{\varepsilon_i} - \widetilde{\varepsilon_j}) &= 
K_G(v + \varepsilon_i - \varepsilon_j),\mbox{ and }\\
K_H(\widetilde{v} - \widetilde{\varepsilon_i} + \widetilde{\varepsilon_j}) &= 
K_G(v - \varepsilon_i + \varepsilon_j)\mbox{ for all distinct } i,j\in [n+1].
\end{align*}	

For each $b \in [B]$, set $P_b=\mathcal{F}_H(\widetilde{\varepsilon_b})$
and $r_b=\widetilde{v}_b+o^H_b$. Note that the choice of $B$ implies $o^H_b\geq 1$ and $r_b\geq 0$ for each $b\in [B]$.
Hence the assumptions of Proposition \ref{prop:mixedvolumekostant} are met by $H$ and $r$, with its application yielding
\[
V\left(P_1^{r_1},\ldots,P_B^{r_B}\right) =
K_H\left(r_1-o^H_1,\ldots,r_B-o^H_B,0\right)
=K_H\left(\widetilde{v}\right)
=K_G\left(v\right).
\]
Applying the Alexandrov--Fenchel inequalities
(Theorem~\ref{thm:afinequality}) completes the proof.
\end{proof}

\subsection{Flow polytopes for parabolic Verma characters; products
of discretely log-concave polynomials}\label{Sadlc}

Given the preceding proof, it is natural to ask if this approach would
also help prove the Discrete Log-Concavity along type $A$ root directions
of $\ch M(\lambda,J)$ in Theorem~\ref{T1}. (For convenience, we refer to
this property as \textbf{ADLC} throughout this subsection.) Such an
alternative approach was indeed undertaken for the special case of Verma
modules in \cite{HMMS}.

In order for this approach to work for parabolic Vermas, a key step would
require proving that since the character of each tensor factor
in~\eqref{Egvm-tensor} satisfies ADLC, hence so does their product.
Stripping away the representation theory, the question becomes:

\textit{Is the set of multivariate homogeneous ADLC polynomials with
nonnegative coefficients closed under multiplication?}

One can further weaken this question, to assume that
\begin{itemize}
	\item[(a)] the coefficients are all nonnegative integers;
	\item[(b)] one of the two polynomials is a geometric series
	$x_i^k + x_i^{k-1} x_j + \cdots + x_j^k$ (hence trivially ADLC);
	\item[(c)] the exponents occurring in the other ADLC polynomial
	form an M-convex set;
\end{itemize}
and then ask if the two polynomials multiply to an ADLC output.

Unfortunately, this question is far from having a positive answer --
whence it is unclear how to proceed via Alexandrov--Fenchel in proving
the discrete log-concavity of parabolic Verma characters. We provide two
families of counterexamples here.

\begin{example}
(This example does not have the weakening~(b) above.)
June Huh communicated to us: let $p(x,y,z) = x^2 + 100 y^2 + z^2 + 10 xy
+ 10 yz + 10 xz$ and $q(x,y,z) = x+y+z$. Then $p,q$ have M-convex
supports and are ADLC, but $p \cdot q$ is not ADLC. One can also use $x^k
q(x,y,z)$ for $k \geq 1$ if polynomials of ``higher'' degree are desired.
\qed
\end{example}

These observations extend to the following result.

\begin{proposition}
Fix an integer $n \geq 2$ and a scalar $b \geq 13/2$, and let
\[
p(x_0, x_1, \dots, x_n) := b^2 x_0^2 + \sum_{i=1}^n x_i^2 + b \sum_{0
\leq i<j \leq n} x_i x_j.
\]
Then for every integer $k \geq 2$, and every finite M-convex subset $S
\subset \mathbb{N}^{n+1}$ with all elements having $\ell^1$-norm $k$ and
containing the points
\[
(k, {\bf 0}_n), \ (k-1,1, {\bf 0}_{n-1}), \
(k-1, 0, 1, {\bf 0}_{n-2}), \ (k-2,2, {\bf 0}_{n-1}), \
(k-2,1,1, {\bf 0}_{n-2}), \ (k-2,0,2, {\bf 0}_{n-2}),
\]
the polynomial $p \cdot q_S$ is not ADLC, even though $p,q_S$ are ADLC
with M-convex supports. Here, the homogeneous polynomial
$q_S(x_0, \dots, x_n) := \sum_{\mu \in S} x^\mu$.
\end{proposition}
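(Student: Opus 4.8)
The plan is to reduce to three variables and then, for each admissible $S$, produce an explicit monomial and root direction at which discrete log-concavity fails. The ``even though'' clause is easy: $\operatorname{supp}(p)$ is the full set of degree-$2$ exponent vectors in $\mathbb{N}^{n+1}$ (M-convex), and the defining ADLC inequalities for $p$ are readily checked; and $q_S$, being a $0/1$-polynomial supported on the M-convex set $S$, is automatically ADLC (if a ``middle'' coefficient vanishes, M-convexity prevents the two outer ones from both equalling $1$). For the reduction: specializing $x_3 = \cdots = x_n = 0$ preserves ADLC and commutes with the product, so if $p\cdot q_S$ were ADLC then so would be $\bar p\cdot q_{\bar S}$, where $\bar p = b^2 x_0^2 + x_1^2 + x_2^2 + b(x_0 x_1 + x_0 x_2 + x_1 x_2)$ and $\bar S := \{\mu \in S : \mu_3 = \cdots = \mu_n = 0\}$. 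Restricting an M-convex set to a coordinate hyperplane keeps it M-convex, and $\bar S$ still contains the six displayed points (they lie in this hyperplane); so it suffices to show $\bar p\cdot q_{\bar S}$ is not ADLC for every M-convex $\bar S \subseteq \mathbb{Z}_{\geq 0}^3$ with all elements of $\ell^1$-norm $k$ and containing those six points.

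Next I would describe $\bar S$ as the lattice-point set of a generalized permutohedron $P$ with $T \subseteq P \subseteq \Delta_k$, where $T$ is the convex hull of the six points and $\Delta_k = \{x \geq 0 : \sum x_i = k\}$. Since $\Delta_k \cap \{x_0 \geq k-2\} = T$, the inclusion $T \subseteq P \subseteq \Delta_k$ forces $P \cap \{x_0 \geq k-2\} = T$, so the elements of $\bar S$ with $x_0$-coordinate $\geq k-2$ are exactly the six points, i.e.\ the monomials of $x_0^{k-2} h_2(x_0,x_1,x_2)$. Now split into two cases. If $\bar S$ equals just those six points, then $\bar p\, q_{\bar S} = x_0^{k-2}\cdot(\bar p\, h_2)$; since multiplication by $x_0$ preserves non-ADLC, it suffices that $\bar p\, h_2$ is not ADLC, and indeed $[x_0 x_1^2 x_2](\bar p h_2) = 1+3b$, $[x_0^2 x_1 x_2](\bar p h_2) = b^2+3b$, $[x_1^3 x_2](\bar p h_2) = 1+b$, with $(1+3b)^2 < (b^2+3b)(1+b)$ exactly when $b^3 - 5b^2 - 3b - 1 > 0$, which holds for $b \geq 13/2$. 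If instead $\bar S$ strictly contains the six points, then (analyzing successive $x_0$-layers of the generalized permutohedron) $\bar S$ reaches down to a lowest $x_0$-level $c_0 \leq k-3$, and its layer there is a short lattice interval of one of finitely many shapes; for each shape I would exhibit a monomial $x_0^{\,c_0+1} x_1^{j} x_2^{l}$ of $\bar p\, q_{\bar S}$ together with a direction $\varepsilon_0 - \varepsilon_1$ or $\varepsilon_0 - \varepsilon_2$ along which its coefficient drops because the relevant neighbour leaves $\bar S$, and the inequality again collapses to a cubic in $b$ of the same flavour (e.g.\ for $k=3$, with that bottom layer the interval $x_1 \in \{1,2\}$, the failure is at $x_0 x_1^2 x_2^2$ in direction $\varepsilon_0 - \varepsilon_1$, threshold $b^3 - 5b^2 - 8b - 3 > 0$).

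The main obstacle is the second case: organizing the analysis by the local shape of $P$ at $(k,0,0)$ (equivalently, by the bottom layers of $\bar S$), computing the handful of relevant coefficients of $\bar p\, q_{\bar S}$ (each a small explicit expression $u + vb$ or $b^2 + vb + u$ whose integer constants depend only on which of a few nearby lattice points lie in $\bar S$), and checking that $13/2$ is large enough to make every one of the finitely many cubic thresholds that arise fail the ADLC inequality. I would first carry out $k = 3$ completely as a template — it already displays all the phenomena — and then do general $k$; the key point that keeps the case list finite and the thresholds below $13/2$ is that M-convexity is restrictive, so not every superset of the six points is an admissible $\bar S$, and the admissible ones are constrained layer-by-layer as above.
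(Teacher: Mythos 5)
Your reduction to three variables is valid, and Case~1 is correct: when $\bar S$ is exactly the six points, $\bar p\,q_{\bar S}=x_0^{k-2}\bar p\,h_2$, and the coefficients at $x_0^2x_1x_2,\ x_0x_1^2x_2,\ x_1^3x_2$ (namely $b^2+3b$, $3b+1$, $b+1$) violate ADLC for $b\geq 13/2$. But Case~2 is a genuine gap, and moreover your chosen monomial triple near $(k,2,1)$ does not survive it. Once $\bar S$ may contain a point with $x_0\leq k-3$, the middle coefficient $[x_0^{k-1}x_1^2x_2]$ becomes $3b+1$ plus $b^2$ if $(k-3,2,1)\in\bar S$, and the lower coefficient $[x_0^{k-2}x_1^3x_2]$ acquires terms governed by $(k-3,3,0)$, $(k-3,2,1)$, $(k-4,3,1)$. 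In the realizable sub-case $(k-3,2,1)\in\bar S$ with $(k-4,3,1)\notin\bar S$ (automatic already at $k=3$), the middle equals $b^2+3b+1$ while the lower is at most $b^2+3b+1$, and $[x_0^kx_1x_2]=b^2+3b$ always, so $(\text{middle})^2>(\text{upper})(\text{lower})$: the ADLC inequality \emph{holds} along your triple. A different triple is then required, and it is not clear your layer-by-layer plan yields a finite case list with all thresholds below $13/2$.

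The paper sidesteps the case analysis entirely by probing at monomials whose exponent in one coordinate is exactly $k$, the common degree of every element of $S$; concretely it uses $x_1^kx_0^2$, $x_1^kx_0x_2$, $x_1^kx_2^2$. Since $q_S$ is $0/1$ and every $\mu\in S$ has $\|\mu\|_1=k$, any $\mu$ contributing to $[x_1^kx_0^ax_2^c]$ with $a+c=2$ must satisfy $\mu_0\leq a$, $\mu_2\leq c$, $\mu_i=0$ for $i\geq 3$, hence $\mu_1\geq k-2$; this forces $\mu$ into the hypothesized set of six, regardless of what else $S$ contains, so the three coefficients are \emph{exactly} $b^2+b+1$, $3b+1$, $b+2$ for every admissible $S$ and the single inequality $(3b+1)^2<(b^2+b+1)(b+2)$, i.e.\ $b^3-6b^2-3b+1>0$, finishes; this is the threshold dictating $b\geq 13/2$. (For these values to arise, the six hypothesized points must be the ones with $\mu_1\geq k-2$ rather than $\mu_0\geq k-2$; the Proposition as printed and its proof differ by a transposition of the $0$ and $1$ coordinates, but the idea is as just described.) The key idea you were missing is to saturate the degree constraint $\|\mu\|_1=k$ in one coordinate of the probe monomial, which collapses the contributing lattice points to the hypothesized ones and eliminates your Case~2 outright.
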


\begin{proof}
The coefficients and exponents of $p$ may be graphically arranged in a
multi-dimensional array -- we depict it here for $n=2$:

\begin{center}
	\begin{tabular}{c c c}
		1 & $b$ & 1 \\
		$b$ & $b$ & \\
		$b^2$ & &
	\end{tabular}
\end{center}

\noindent It is easy to see that this 2-dimensional array is ADLC; similarly,
$p$ is ADLC for all $n \geq 2$. Also verify by inspection that $p$ has
M-convex support. Moreover, $q_S$ has M-convex support, hence
by~\cite{MTY19} has the SNP (saturated Newton polytope) property, meaning
that if one considers the lattice points that are the exponents in its
monomials, there are no ``internal gaps''. Hence all arithmetic
progressions in $S$ have corresponding coefficients in $q_S$:
\[
\dots, 0, 0;\ \ 1, 1, \dots, 1, 1;\ \ 0, 0, \dots
\]
and this is clearly log-concave. Thus $q_S$ is also ADLC.

However, one can compute the following monomials and their coefficients
in $p \cdot q_S$:
\[
x_1^k x_0^2 \mapsto b^2 + b + 1; \qquad
x_1^k x_0 x_2 \mapsto 3b+1; \qquad
x_1^k x_2^2 \mapsto b+2,
\]
and now we compute using that $b \geq 13/2$:
\[
(b+2)(b^2 + b + 1) - (3b+1)^2 = b \cdot b \cdot (b-6) - 3b + 1 \geq b
\cdot \frac{13}{2} \cdot \frac{1}{2} - 3b + 1 \geq \frac{b}{4} + 1 > 0.
\]
Hence $p \cdot q_S$ is not ADLC.
\end{proof}

The above example and result motivate one to ask just how strong (or
weak) hypotheses are required to preserve the ADLC or related properties
for homogeneous polynomials, with or without the weakening~(b) above. We
begin with three classical, interrelated, positive, ``univariate''
results.
The notion of log-concavity is also known in the theory of total
positivity as the $TN_2$ property (``\textit{totally nonnegative of order
2}''). Namely, given a real sequence $(c_n)_{n \in \mathbb{Z}}$, define
the semi-infinite Toeplitz matrix $T_{\bf c} = (a_{i,j})_{i,j \geq 0}$
where $a_{ij} := c_{i-j}$ for all $i,j$. Then ${\bf c}$ or $T_{\bf c}$ is
said to be $TN_r$ for an integer $r \in [1,\infty]$ if all finite
submatrices of $T_{\bf c}$ of size at most $r \times r$ have nonnegative
determinant. The Cauchy--Binet formula gives that (semi-infinite
Toeplitz) $TN_r$ matrices are closed under multiplication.

Now let ${\bf c}$ be a finite positive sequence with no internal zeros,
padded by zeros:
\[
\dots, 0, 0;\ \ c_0, \dots, c_k;\ \ 0, 0, \dots
\]
with all $c_i > 0$ -- and let ${\bf d} = (d_0, \dots, d_l) \in
(0,\infty)^{l+1}$ be another. One can encode these by their generating
functions/polynomials $\Psi_{\bf c}(x) := c_0 + \cdots + c_k x^k$, and
similarly $\Psi_{\bf d}$. Then $T_{\bf c} T_{\bf d}$ corresponds to the
sequence obtained from $\Psi_{\bf c}(x) \Psi_{\bf d}(x)$, i.e.\ the
convolution product of ${\bf c}, {\bf d}$. Now we record the
aforementioned classical results:
\begin{itemize}
\item For $r=1$, the $TN_r$ property is just nonnegativity. Thus, the
Cauchy--Binet formula yields the (trivial) fact that convolving two
positive sequences yields a positive sequence.

\item For $r=2$, the $TN_r$ property is log-concavity. This yields the
classical fact (see e.g.\ \cite[Chapter~8, Theorem~1.2]{Karlin}) that
convolving two log-concave sequences with no internal zeros yields
another such.

\item For $r=\infty$, the $TN_r$ property is equivalent to the
real-rootedness of $\Psi_{\bf c}(x)$, by celebrated 1950s results of
Edrei~\cite{Edrei,Edrei2} and Aissen--Schoenberg--Whitney~\cite{ASW} --
and ${\bf c}$ is then termed a (finite) \textit{P\'olya frequency
sequence}. Translating modulo this deep equivalence, the convolution fact
is again trivial: the product of two real-rooted polynomials is
real-rooted.
\end{itemize}

The $r=2$ fact was ``upgraded'' in two ways by Br\"and\'en--Huh. The
first is \cite[Corollary 3.8]{BH}: denormalized Lorentzian polynomials
$p$ (i.e.\ $N(p)$ is Lorentzian) are closed under multiplication.
In another direction, the $r=2$ fact was first extended by
Liggett~\cite[Theorem~2]{Lig97} to the univariate statement that the
convolution of two ultra log-concave sequences with no internal zeros is
another such.
In turn, this was extended to the multivariate result
\cite[Corollary~2.32]{BH}, which moreover answers a question of Gurvits
(1990) by showing that the product of strongly log-concave homogeneous
(multivariate) polynomials is strongly log-concave.

Given this multitude of positive results, it is natural to ask if there
is a ``naive'' multivariate generalization of the $r=2$ fact. The
multivariate generalization of log-concavity is simply the ADLC property
(after first homogenizing). However, as the following counterexample
shows, preservation of ADLC under products fails even if one polynomial
is in 2 variables (or homogenized to 3 variables) and the other is a
univariate polynomial -- whose coefficients can even be taken to be
(ultra) log-concave. We write down the result for homogeneous
polynomials; the interested reader may reduce one variable in each by
dehomogenizing.

\begin{proposition}
Fix positive real scalars $a,b>0$ and define the family
\[
p_{b,t}(x,y,z)
:= b^2 \cdot x^2y^2 + b \cdot x^2yz + x^2z^2 + b^2 \cdot xy^2z + b^2
\cdot y^2z^2 + t \cdot xyz^2, \qquad t > 0.
\]
Then for any $t > 2b+a$ and any homogeneous polynomial $q(x,y) = x^k +
a x^{k-1}y + \cdots$ with nonnegative coefficient on $x^{k-2}y^2$,
$p_{b,t}$ is ADLC but $p_{b,t}q$ is not.
\end{proposition}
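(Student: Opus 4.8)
The plan is to check the two assertions separately, since this proposition is a verification of a counterexample rather than a structural statement.

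\emph{That $p_{b,t}$ is ADLC.} I would check the definition directly. The support of $p_{b,t}$ is $\{\mu\in\mathbb{N}^3 : |\mu|=4,\ \mu_i\leq 2 \text{ for all } i\} = \{(2,2,0),(2,0,2),(0,2,2),(2,1,1),(1,2,1),(1,1,2)\}$, which is readily checked to be M-convex, and all coefficients are nonnegative since $t>0$. It then remains to run through the finitely many inequalities $c_\mu^2\geq c_{\mu+\varepsilon_i-\varepsilon_j}\,c_{\mu-\varepsilon_i+\varepsilon_j}$. All but three of these are trivial, because at least one of the two neighbours lies outside the support; the three ``interacting'' collinear triples are $\{(2,2,0),(2,1,1),(2,0,2)\}$, $\{(2,2,0),(1,2,1),(0,2,2)\}$, and $\{(2,0,2),(1,1,2),(0,2,2)\}$, giving respectively $b^2\geq b^2\cdot 1$, $b^4\geq b^2\cdot b^2$, and $t^2\geq 1\cdot b^2$. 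The first two hold with equality, and the last holds because $t>2b+a>b$. Hence $p_{b,t}$ is ADLC.

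\emph{That $p_{b,t}q$ is not ADLC.} Write $q=x^k+a\,x^{k-1}y+a_2\,x^{k-2}y^2+\cdots$ with $a_2\geq 0$ (so $k\geq 2$). The idea is to exhibit one violated ADLC inequality, along the root direction $\varepsilon_1-\varepsilon_3$, using the three monomials $x^{k+2}y^2$, $x^{k+1}y^2 z$, and $x^{k}y^2 z^2$, which are collinear in that direction. Since $q$ involves no $z$, the entire $z$-degree of any contributing product comes from $p_{b,t}$, and the $y$-degree $2$ then pins down the relevant $q$-monomial in each case; a short bookkeeping gives the coefficients of these three monomials in $p_{b,t}q$ as $b^2$, $ba+b^2$, and $a_2+ta+b^2$, respectively. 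ADLC would force $(ba+b^2)^2\geq b^2\,(a_2+ta+b^2)$, equivalently $(a+b)^2\geq a_2+ta+b^2$; the difference of the two sides equals $a^2+2ab-a_2-ta = -a\bigl(t-(2b+a)\bigr)-a_2$, which is strictly negative whenever $t>2b+a$ (using $a_2\geq 0$). So this ADLC inequality fails, and $p_{b,t}q$ is not ADLC.

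I do not anticipate a real obstacle, but two points call for care. First, the coefficients of $p_{b,t}$ are engineered precisely so that its ``degenerate'' ADLC inequalities hold with equality rather than with strict inequality -- this is the role of attaching $b^2$ to $x^2y^2$, $xy^2z$, $y^2z^2$ and $b$ to $x^2yz$ -- which is what lets multiplication by $q$ tip one of them the wrong way; a less careful choice of $p_{b,t}$ would leave slack that the product could absorb. Second, one should confirm that the bookkeeping in the second step is exhaustive, i.e.\ that no other product of a monomial of $p_{b,t}$ with a monomial of $q$ lands on $x^{k+2}y^2$, $x^{k+1}y^2 z$, or $x^{k}y^2 z^2$; this is immediate from the $z$- and $y$-degree constraints just noted. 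Finally, although it is not required for the statement, one may remark that $q$ can in fact be taken ADLC, or even with ultra-log-concave coefficients, e.g.\ $q=(x+y)^k$ (so $a=k$, $a_2=\binom{k}{2}$), so that the proposition genuinely exhibits a failure of closure of ADLC polynomials under products.
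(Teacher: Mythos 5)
Your proof is correct and takes essentially the same approach as the paper: you exhibit the same failed ADLC inequality for $p_{b,t}q$ at the collinear triple of exponents $(k+2,2,0)$, $(k+1,2,1)$, $(k,2,2)$, computing the same coefficients $b^2$, $b^2+ab$, $b^2+at+a_2$ and verifying $(b^2+ab)^2 < b^2(b^2+at+a_2)$ when $t>2b+a$. Your verification that $p_{b,t}$ itself is ADLC is somewhat more explicit than the paper's (which simply displays the coefficient array and asserts it), but the argument is the same.
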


Note that $p_{b,t}$ has M-convex support and is ADLC:

\begin{center}
	\begin{tabular}{c c c}
		1 & $t$ & $b^2$ \\
		$b$ & $b^2$ & \\
		$b^2$ & &
	\end{tabular}
\end{center}

\noindent Moreover, one can choose $q$ to have all nonnegative
coefficients, even ones forming an ultra log-concave (hence ADLC)
sequence. And yet, $p_{b,t}q$ is not ADLC when $t > 2b+a$.

\begin{proof}
Let $c \geq 0$ be the coefficient of $x^{k-2} y^2$. Now compute the
coefficients of the following monomials in $p_{b,t} q$:
\[
x^{k+2} y^2   \mapsto b^2; \qquad
x^{k+1} y^2 z \mapsto b^2 + ab; \qquad
x^k y^2 z^2   \mapsto b^2 + at + c.
\]
Therefore $p_{b,t}q$ is not ADLC, since
\[
(b^2 + ab)^2 - b^2(b^2+at+c) = b^2 (2ba + a^2 - ta - c) \leq b^2a (2b+a -
t) < 0. \qedhere
\]
\end{proof}

\section{Three instances of failure of discrete log-concavity}\label{Shigherorder}

We now explain how our log-concavity
result is tight in a precise sense coming from representations of Lie
algebras and symmetric functions.

\subsection{Failure outside type $A$}\label{SG2}

Recall that we have unified log-concavity results for two prominent
families of highest weight representations over
$\mathfrak{sl}_{n+1}(\mathbb{C})$:
(a)~finite-dimensional simple modules; and
(b)~Verma modules.
It is natural to ask if these phenomena hold over general complex
semisimple Lie algebras.

Unfortunately, this was already disproved in~\cite[Figure 2]{HMMS} for the
family~(a) over $C_2 = \mathfrak{sp}_4(\mathbb{C})$. Thus, one can ask if
it also fails for the family~(b) over some other (semi)simple Lie
algebra. The following example shows that it does.

\begin{example}\label{ExG2}
(The three weights in this example $\mu, \mu \pm (\alpha+\beta)$ were
located using SageMath by G.V.\ Krishna Teja and K.\ Hariram, who then
communicated them to us.)
Let $\mathfrak{g}$ be the complex simple Lie algebra of type $G_2$. We
claim that the Kostant partition function is not discretely log-concave
at $n(\alpha + \beta)$ for $n=4,5,6$, where $\alpha, \beta$ are the two
simple roots.

To see why, first assume that  $\alpha$ is short, and order the positive
roots as
\[
\alpha, \quad \beta, \quad \alpha + \beta, \quad 2\alpha + \beta, \quad 3
\alpha + \beta, \quad 3 \alpha + 2 \beta.
\]

Next, define for ${\bf n} = (n_1, \dots, n_6) \in \mathbb{Z}_{\geq 0}^6$
the map $\Psi \colon \mathbb{Z}_{\geq 0}^6 \to \Lambda$, sending
\[
{\bf n} \quad \mapsto \quad n_1 \alpha + n_2 \beta + n_3 (\alpha+\beta) +
n_4 (2\alpha + \beta) + n_5 (3\alpha + \beta) + n_6 (3 \alpha + 2 \beta).
\]

We now list three weights in arithmetic progression, whose multiplicities
are not log-concave: $n\alpha+n\beta$ for $n=4,5,6$. First,
\begin{align*}
\Psi^{-1}(4\alpha + 4\beta) = &\
\{ (a,a,4-a,0,0,0) : 0 \leq a \leq 4 \} \sqcup 
\{ (b,b+1,2-b,1,0,0) : 0 \leq b \leq 2 \} \sqcup\\
&\ \sqcup \{ (0,2,0,2,0,0), (1,3,0,0,1,0), (0,2,1,0,1,0), (1,2,0,0,0,1),
(0,1,1,0,0,1) \},
\end{align*}
so $K_0(4\alpha + 4\beta) = |\Psi^{-1}(4\alpha + 4\beta)| = 13$. Next, to
obtain $5\alpha + 5\beta$, one can start by adding either $(\alpha) +
(\beta)$ to these 13 vector partitions, or $(\alpha+\beta)$ -- and then
considering others (which have neither $(\alpha)+(\beta)$ nor
$(\alpha+\beta)$ in their decomposition). Thus,
\begin{align*}
\Psi^{-1}(5\alpha + 5\beta) = &\ \{ {\bf n} + (1,1,0,0,0,0) : \Psi({\bf
n}) = 4\alpha + 4\beta \} \sqcup \{ (0,0,5,0,0,0), (0,1,3,1,0,0) \}
\sqcup\\
&\ \sqcup \{ (0,2,1,2,0,0), (0,2,2,0,1,0), (0,1,2,0,0,1) \} \sqcup \{
(0,3,0,1,1,0), (0,2,0,1,0,1) \},
\end{align*}
so $K_0(5\alpha + 5\beta) = 20$. Finally, via a similar procedure,
\begin{align*}
\Psi^{-1}(6\alpha + 6\beta) = &\ \{ {\bf n} + (1,1,0,0,0,0) : \Psi({\bf
n}) = 5\alpha + 5\beta \} \sqcup \{ (0,0,6,0,0,0), (0,1,4,1,0,0) \}
\sqcup\\
&\ \sqcup \{ (0,2,2,2,0,0), (0,2,3,0,1,0),
(0,1,3,0,0,1),  (0,3,1,1,1,0), (0,2,1,1,0,1) \} \sqcup\\
&\ \sqcup \{ (0,3,0,3,0,0), (0,4,0,0,2,0), (0,3,0,0,1,1), (0,2,0,0,0,2) \},
\end{align*}
and so $K_0(6\alpha + 6\beta) = 31$. But $20^2 = 400 < 403 = 13 \cdot
31$, so log-concavity of the Kostant partition function fails in type
$G_2$. \qed
\end{example}

\subsection{Log-concavity fails for Jack and Macdonald polynomials}

Given that neither Verma nor finite-dimensional modules have characters
with log-concave coefficients across all simple Lie types, we return to
-- and henceforth work in -- type $A$. In this subsection, we examine if
the log-concavity of the Kostka numbers in $s_\lambda$ (shown
in~\cite[Theorem 2]{HMMS}) extends to certain important generalizations of these
symmetric functions, which are the subject of tremendous recent and
ongoing research.

The first such family is that of \textit{Hall--Littlewood polynomials},
introduced by Hall in the 1950s (see~\cite{Hall}) and by
Littlewood~\cite{Littlewood}. These are a one-parameter generalization of
Schur polynomials, which connect to enumerating subgroups of finite
abelian $p$-groups, to $GL_n$-representations over finite and $p$-adic
fields, and to canonical bases over quantum groups. In the simplest case
of $n=2$, given a partition $\lambda = (a,b)$ (so $a \geq b \geq 0$), we
have:
\begin{equation}\label{EHall}
P_{(a,b)}(x,y; t) := \begin{cases}
s_{(a,b)}(x,y) = x^a y^b + x^{a-1} y^{b+1} + \cdots + x^b y^a, \qquad &
\text{if } a \leq b+1,\\
s_{(a,b)}(x,y) - t s_{(a-1,b+1)}(x,y), & \text{if } a > b+1.
\end{cases}
\end{equation}
Thus for $t=0$ we get the Schur polynomial, while $t=1$ yields the
monomial symmetric function.

In a separate direction, Jack~\cite{Jack} proposed a common
generalization of zonal polynomials (which relate to multivariate
statistics and to $GL_n(\mathbb{R})$-representations) and Schur
polynomials. These are called \textit{Jack polynomials}, and they possess
a ``Jack parameter'' $\tau$. Again when $n=2$, we have:
\begin{align}\label{EJack}
\begin{aligned}
P_{(a,b)}^{(\tau)}(x,y) := &\ \frac{(a-b)!}{(\tau)_{a-b}}
\sum_{i=0}^{a-b} \frac{(\tau)_i (\tau)_{a-b-i}}{i! (a-b-i)!} x^{b+i}
y^{a-i},\\
\text{where} \quad (\tau)_k := &\ \tau(\tau+1)\cdots(\tau+k-1) \text{ for
} k>0; \ (\tau)_0 := 1.
\end{aligned}
\end{align}
Thus we recover Schur polynomials when $\tau=1$, and there are other
well-known specializations among $\tau \in [0,\infty]$. See
e.g.~\cite{Macdonald} for more on both Hall--Littlewood as well as Jack
polynomials.

In fact, these two families have a common, overarching generalization:
\textit{Macdonald polynomials}. Once again if $n=2$ and $\lambda =
(a,b)$, using the combinatorial formula in
\cite[VI.~(7.13$'$)]{Macdonald} we have:

\begin{align}\label{EMacdonald}
\begin{aligned}
P_{(a,b)}(x,y; q,t) = P_{(a,b)}(x,y; 1/q, 1/t) := &\ \sum_{i=0}^{a-b}
\frac{\qpoch{q}{q}{a-b}}{\qpoch{q}{q}{i}\qpoch{q}{q}{a-b-i}}
\frac{\qpoch{t}{q}{i}\qpoch{t}{q}{a-b-i}}{\qpoch{t}{q}{a-b}} x^{b+i} y^{a-i}\\
= &\ \frac{\qpoch{q}{q}{a-b}}{\qpoch{t}{q}{a-b}} \sum_{i=0}^{a-b}
\frac{\qpoch{t}{q}{i}}{\qpoch{q}{q}{i}}
\frac{\qpoch{t}{q}{a-b-i}}{\qpoch{q}{q}{a-b-i}} x^{b+i} y^{a-i},
\end{aligned}
\end{align}
where $\displaystyle \qpoch{z}{q}{k} := \prod_{i=0}^{k-1} (1-zq^i)$
is the $q$-Pochhammer symbol for $k>0$, and $\qpoch{z}{q}{0} := 1$.
In all of the above cases, note that the monomials $x^a y^b, x^b y^a$
have coefficient $1$.

One now checks that setting
(a)~$q=t$, (b)~$q=0$, (c)~$t=1$ in $P_{(a,b)}(x,y;q,t)$ recovers
(a)~the Schur polynomial $s_{(a,b)}(x,y)$,
(b)~the Hall--Littlewood polynomial $P_{(a,b)}(x,y;t)$,
(c)~the monomial symmetric function~$m_{(a,b)}(x,y)$, respectively;
and (d)~if one sets $t = q^\tau$ and lets $q \to 1$ then this
recovers the Jack polynomial $P_{(a,b)}^{(\tau)}$ too. Again
see~\cite{Macdonald} for details.

Additionally, an interesting property satisfied by all of the above
families is \textit{stability}. A consequence of this is: specializing
any of these polynomials $P_{(a,b)}$ in $n>2$ variables $x_i$, to $x_n =
0$, yields the same polynomial in $n-1$ variables. Thus, the coefficients
of the various monomials stay unchanged upon specializing to two
variables (i.e.\ setting $x_3 = \cdots = x_n = 0$).

Given this, we now show via examples that for all three families of
symmetric polynomials $P_{(a,b)}$ which moreover have nonnegative real
coefficients, the coefficients are not log-concave (along the type $A_1$
root direction $y/x$) in general.

\begin{example}
	For Macdonald polynomials, it is customary to consider either $q,t \in
	(0,1)$ or $q,t \in (1,\infty)$ (by~\eqref{EMacdonald}) -- in which case
	all monomials $x^i y^j$ have nonnegative coefficients. Let us take $q,t
	\in (0,1)$. Now we explicitly see that
	\begin{equation}\label{Emac20}
		P_{(2,0)}(x,y; q,t) = x^2 + \frac{(1-t)(1+q)}{1-tq} xy + y^2,
	\end{equation}
	so the coefficients are log-concave if and only if $(q-t)(q-t + 2-2qt) \geq 0$. For each $t>0$, this holds if $q \geq t$ in $(0,1)$ but fails as
	$q \to 0^+$. Thus, log-concavity is not ``universally'' true in the
	region $(0,1)^2$.	
	
	As a special case, taking $q \to 0^+$ yields the Hall--Littlewood
	polynomial
	$P_{(2,0)}(x,y;t) = x^2 + (1-t) xy + y^2$,
	and its coefficients too are not log-concave for $t \in (0,1)$. \qed
\end{example}

\begin{example}
We show that the other specialization of Macdonald polynomials above --
namely, Jack polynomials -- also do not have log-concave coefficients in
general.
Let $n=2$, $a > n$, and  $b=0$. Write $P_{(a,0)}^{(\tau)}(x,y) =
\frac{a!}{(\tau)_a} \sum_{i=0}^a c_i x^i y^{a-i}$. We check if $c_1^2
\geq c_0 c_2$ for all $\tau \in [0,\infty]$:
\[
\frac{c_1^2}{c_0 c_2} = \frac{\tau (\tau + a - 2) \cdot
2a}{(\tau+1)(\tau+a-1) \cdot (a-1)}.
\]
This ratio exceeds $1$ if and only if
\[
0 \leq \tau (\tau + a - 2) \cdot 2a - (\tau+1)(\tau+a-1) \cdot (a-1) =
(\tau - 1) (a^2-2a-1 + \tau(a+1)).
\]
As $a \geq 3$, the second factor is positive. Thus log-concavity holds if
and only if $\tau \in [1,\infty]$, whence not for all $\tau$. \qed
\end{example}

For completeness, we also record here another related (but disparate)
notion of log-concavity for Schur polynomials, that is once again
``tight'' among these more general symmetric functions. This is along
\textit{partitions}. Namely, Okounkov had shown~\cite{Okounkov} that
given partitions $\lambda, \mu, \nu$, one has monomial log-concavity:
\[
\lambda + \mu = 2 \nu \quad \implies \quad s_\nu^2(x_1, x_2, \dots) -
s_\lambda(x_1, x_2, \dots) s_\mu(x_1, x_2, \dots) \in \mathbb{Z}_{\geq
0}[x_1, x_2, \dots].
\]

In turn, this implies numerical log-concavity at every point in the
positive orthant $(0,\infty)^n$ (setting $x_{n+1} = x_{n+2} = \cdots = 0
< x_1, \dots, x_n$). Moreover, this monomial log-concavity can be further
upgraded to Schur log-concavity, as conjectured by Okounkov and shown by
Lam--Postnikov--Pylyavskyy~\cite{LPP}.

Given the above results and counterexamples, one can ask if Okounkov's
conjecture -- or even its most basic consequence of \textit{numerical}
log-concavity -- holds for Hall--Littlewood, Jack, or Macdonald
polynomials. We record via some basic examples that it does not, even for
$n=2$.

\begin{example}
As above, we first consider Macdonald polynomials. Using~\eqref{Emac20}
and that
\[
P_{(3,0)}(x,y;q,t) = x^3 + y^3 + \frac{(1-t)(1+q+q^2)}{1-tq^2} xy(x+y),
\]
we compute:
\begin{align*}
&\ P_{(2,0)}(x,y;q,t)^2 - P_{(3,0)}(x,y;q,t) P_{(1,0)}(x,y;q,t)\\
= &\ \frac{(q-t)(1-q)(1+tq)}{(1-tq)(1-tq^2)} xy (x^2+y^2) + \left[ 2 +
\frac{(1-t)^2 (1-q)^2}{(1-tq)^2} - \frac{2(1-t)(1+q+q^2)}{(1-tq^2)}
\right] x^2y^2.
\end{align*}

As $q \to 0^+$ (or at $q=0$), we get
\[
-txy (x^2 + y^2) + (1+t^2) x^2y^2 = -xy(tx-y)(x-ty),
\]
and for any $t \in (0,1)$, this expression is negative for $0 < y < tx <
x$. (This also covers the case of Hall--Littlewood polynomials.) Hence it
remains negative for small $q>0$, and is also not monomial positive
either.
\end{example}

\begin{example}
For Jack polynomials, $P_{(1,0)}^{(\tau)}(x,y) = x+y$,
$P_{(2,0)}^{(\tau)}(x,y) = x^2 + y^2 + \frac{2\tau}{\tau+1} xy$, and
$P_{(3,0)}^{(\tau)}(x,y) = x^3 + y^3 + \frac{3\tau}{\tau+2} xy(x+y)$.
Hence,
\begin{align*}
S_\tau(x,y) := &\ P_{(2,0)}^{(\tau)}(x,y)^2 - P_{(3,0)}^{(\tau)}(x,y)
P_{(1,0)}^{(\tau)}(x,y)\\
= &\ \frac{2(\tau-1)}{(\tau+1)(\tau+2)} xy(x^2+y^2) + \frac{4(\tau^2 +
\tau + 1)}{(\tau+1)^2 (\tau+2)} x^2y^2.
\end{align*}
Let $\tau \in [0,1)$; then this is not monomial positive. Moreover, let
$S'(x,y) := S_\tau(x,y) / (xy)$; then $S'(x,0) < 0$ for all $x>0$. Hence
by continuity, $S'(x,y) < 0$ for $x>0$ and small $y>0$, whence
$S_\tau(x,y) < 0$ as well. \qed
\end{example}

\subsection{Failure for higher-order Verma modules in type $A$}

Recall that Verma modules and parabolic
Vermas (e.g.\ finite-dimensional simple modules) are examples of modules
with (a)~a universal highest weight property, and
(b)~a Weyl-type character formula, arising from
(c)~ a BGG-type resolution via direct sums of Verma modules.
In fact these are part of a bigger family of highest weight
$\mathfrak{g}$-modules (not merely over $\mathfrak{sl}_{n+1}(\mathbb{C})$
but over any Kac--Moody Lie algebra) which satisfy~(a) and (proved in
some cases)~(b) and~(c). These modules were uncovered in recent
work~\cite{Teja}, where they were termed ``higher order Verma modules''.

There is a fourth notable feature of these modules: (d)~Parabolic Verma
modules not only have Weyl-type character formulas, but they also yield
the weight-sets of all simple highest weight modules (including the
non-integrable ones) -- not just in finite type \cite{Kh1} but over all
Kac--Moody Lie algebras \cite{DK}. Similarly, higher order Verma modules
yield the weight-sets of \textit{all} highest weight modules, again over
arbitrary Kac--Moody $\mathfrak{g}$ \cite{Teja}. Thus, they are a natural
family to study beyond parabolic Verma modules; in particular, here we
explore the question of log-concavity of their characters.

\subsection{Preliminaries on higher order Verma modules}

We first introduce the key notion needed to define higher order Verma
modules. A \textit{hole} is defined \cite{Teja} to be an independent
(i.e.\ pairwise orthogonal) set $H \subseteq I$ of simple roots/nodes
in the Dynkin diagram of $\mathfrak{g}$. Given a hole $H \subseteq I$ and
a highest weight $\lambda \in \Lambda^+_H$ (see~\eqref{ELambdaJplus}),
the corresponding higher order Verma module is
\begin{equation}\label{EHOVerma}
\mathbb{M}(\lambda, \{ H \}) := M(\lambda) / U \mathfrak{g} \cdot
\prod_{i \in H} f_i^{\lambda(h_i)+1} \cdot m_\lambda.
\end{equation}
Note that the denominator is a submodule of $M(\lambda)$ that is
isomorphic to the Verma module $M(\prod_{i \in H} s_i \bullet \lambda)$;
and the $f_i, i \in H$ pairwise commute, as do the $s_i$. Moreover, this
quotient module obviously has a Weyl-type character formula, in fact a
2-step resolution by ``usual'' Verma modules:
\begin{align*}
& 0 \to M(\prod_{i \in H} s_i \bullet \lambda) \to M(\lambda) \to
\mathbb{M}(\lambda, \{ H \}) \to 0;\\
& \ch \mathbb{M}(\lambda, \{ H \}) = \sum_{w \in W_{\mathcal{H}} }
(-1)^{\ell_\mathcal{H}(w)} \ch M(w \bullet \lambda),
\end{align*}
where $W_{\mathcal{H}} = \{ e, w_\circ := \prod_{i \in H} s_i \} \cong
\mathbb{Z} / 2 \mathbb{Z}$ and the associated length function is
$\ell_\mathcal{H}(e) = 0$, $\ell_\mathcal{H}(w_\circ) = 1$.

In general, a higher order Verma module involves quotienting $M(\lambda)$
by $U \mathfrak{g} \cdot \prod_{i \in H} f_i^{\lambda(h_i)+1} \cdot
m_\lambda$ for multiple holes $H$. (There can only be finitely many such,
since each $H \subseteq I$.)
For example, if each hole is a singleton $\{ i \}$, and the set of these
is $J$, then (a)~necessarily $\lambda \in \Lambda^+_J$, and (b)~we obtain
precisely the parabolic Verma module $M(\lambda, J)$~\eqref{Egvm}.
More generally, we have:

\begin{definition}\label{DHOVerma}
Let $\mathcal{H} = \{ H_1, \dots, H_l \}$ be a collection of holes --
i.e.\ each $H_j \in {\rm Indep}(I)$. Given a weight $\lambda \in
\bigcap_{j=1}^l \Lambda_{H_j}^+$, the corresponding \textit{higher order
Verma module} is
\[
\mathbb{M}(\lambda, \mathcal{H}) := \frac{M(\lambda)}{\sum_{j=1}^l U
\mathfrak{g} \cdot \prod_{i \in H_j} f_i^{\lambda(h_i)+1} \cdot
m_\lambda}.
\]
\end{definition}

We also need the notion of \textit{minimal holes}. For example if
$\lambda = 0$ and $\mathfrak{g} = \mathfrak{sl}_6(\mathbb{C})$, then $f_1
\overline{m_0} = 0$ in $M(\lambda, \{ 1 \})$, which automatically implies
$f_i f_1 \overline{m_0} = 0$ for all $i>2$. Thus for example,
\[
\mathbb{M}(0, \{ \{ 1 \} \}) = 
\mathbb{M}(0, \{ \{ 1 \}, \{ 1, 3 \}, \{ 1, 4 \}, \{1,5\},\{ 1, 3, 5 \} \}).
\]

Thus, henceforth we will always replace $\mathcal{H}$ by the subset of
``minimal holes'' $\mathcal{H}^{\min}$. Notice that this consists of
irredundant holes $H$.

\begin{definition}
Given $\mathcal{H} \subseteq 2^I$ and $\lambda$ as in
Definition~\ref{DHOVerma}, the module $\mathbb{M}(\lambda, \mathcal{H}) =
\mathbb{M}(\lambda, \mathcal{H}^{\min})$ is said to be an {\em $m$th
order Verma module}, where $m = \max_{H \in \mathcal{H}^{\min}} |H|$.
\end{definition}

Thus, parabolic Verma modules are first order:
\[
M(\lambda, J) = \mathbb{M}(\lambda, \{ \{ i \} : i \in J \}),
\]
while by convention we say that the ``usual'' Verma module $M(\lambda) =
\mathbb{M}(\lambda, \emptyset)$ is zeroth order (as is $0 =
\mathbb{M}(\lambda, \{ \emptyset \})$). The module $\mathbb{M}(\lambda, \{
H \})$ in~\eqref{EHOVerma} is $|H|$th order.

\begin{remark}
For there to exist an $m$th order Verma module over
$\mathfrak{sl}_{n+1}(\mathbb{C})$, it is necessary for an independent
subset of size $m$ to exist within the Dynkin diagram on $I = [n]$. Thus
$n \geq 2m-1$. In particular, there are no second (or higher) order Verma
modules over $\mathfrak{sl}_2(\mathbb{C})$ or
$\mathfrak{sl}_3(\mathbb{C})$ -- one only has Vermas and parabolic
Vermas.
\end{remark}

\subsection{The negative result}

We now come to the goal of this section: showing that over
$\mathfrak{sl}_{n+1}(\mathbb{C})$, higher order Verma characters are not
log-concave along type $A$ root directions. We begin by writing out the
simplest example, before proceeding to the general result.

\begin{example}\label{E2orderVerma}
Let $\mathfrak{g} = \mathfrak{sl}_4(\mathbb{C})$, and let
\[
\lambda = 0, \qquad V = \frac{M(0)}{U \mathfrak{g} \cdot f_1 f_3 \cdot
m_0} = \frac{M(0)}{M(-\alpha_1-\alpha_3)} = \mathbb{M}(0, \{ \{ 1, 3 \}
\}).
\]
This is a second order Verma module. Let $\beta = \alpha_3$ and consider
the $\beta$-root string $\{ - \alpha_1 - \alpha_2 - p \alpha_3 : p =
1,2,3 \}$.
The respective weight spaces of the two Verma modules whose quotient is
$V$ are listed in Table~\ref{Ttable}, via monomials in the ordered PBW
basis whose roots are the following ordered sequence of positive roots in
$\mathfrak{n}^+$:
\[
\alpha_1, \quad \alpha_2, \quad \alpha_3, \quad \alpha_1 + \alpha_2,
\quad \alpha_2 + \alpha_3, \quad \alpha_1 + \alpha_2 + \alpha_3.
\]

\begin{table}[ht]
\begin{tabular}{| c || c | c | c |}
\hline
$\mu$ & Basis of $M(0)_\mu$ & \quad Basis of
$M(-\alpha_1-\alpha_3)_\mu$ \hspace*{2mm} & \quad $\dim V_\mu$
\hspace*{2mm} \\
\hline
\hline
$-\alpha_1-\alpha_2-\alpha_3$
& $f_{\alpha_1} f_{\alpha_2} f_{\alpha_3}$,\ \ $f_{\alpha_3}
f_{\alpha_1+\alpha_2}$,
& $f_{\alpha_2}$
& 3\\
& $f_{\alpha_1} f_{\alpha_2+\alpha_3}$,\ \
$f_{\alpha_1+\alpha_2+\alpha_3}$ & & \\
\hline
$-\alpha_1-\alpha_2-2\alpha_3$
& $f_{\alpha_1} f_{\alpha_2} f_{\alpha_3}^2$,\ \ $f_{\alpha_3}^2
f_{\alpha_1+\alpha_2}$, \hspace*{2mm}
& $f_{\alpha_2} f_{\alpha_3}$,\ \ $f_{\alpha_2+\alpha_3}$
& 2\\
& \quad $f_{\alpha_1} f_{\alpha_3} f_{\alpha_2+\alpha_3}$,\ \
$f_{\alpha_3} f_{\alpha_1+\alpha_2+\alpha_3}$ \hspace*{2mm} & & \\
\hline
$-\alpha_1-\alpha_2-3\alpha_3$
& $f_{\alpha_1} f_{\alpha_2} f_{\alpha_3}^3$,\ \ $f_{\alpha_3}^3
f_{\alpha_1+\alpha_2}$, \hspace*{2mm}
& $f_{\alpha_2} f_{\alpha_3}^2$,\ \ $f_{\alpha_3} f_{\alpha_2+\alpha_3}$
& 2\\
& \quad $f_{\alpha_1} f_{\alpha_3}^2 f_{\alpha_2+\alpha_3}$,\ \
$f_{\alpha_3}^2 f_{\alpha_1+\alpha_2+\alpha_3}$ \hspace*{2mm} & & \\
\hline
\end{tabular}
\vspace{1ex}
\caption{}	
\label{Ttable}
\end{table}

From the table it is clear that $(\dim V_\mu)^2 < \dim V_{\mu+\beta} \dim
V_{\mu-\beta}$ for $\mu = -\alpha_1-\alpha_2-2\alpha_3$ and $\beta =
\alpha_3$. This violates log-concavity of the character of this
second order Verma module $V = \mathbb{M}(0, \{ \{ 1, 3 \} \})$. \qed
\end{example}

Example~\ref{E2orderVerma} is prototypical of the general situation:
the characters of the $m$th order Verma modules~\eqref{EHOVerma} are
never log-concave for $m \geq 2$. More strongly, we have the following result.

\begin{theorem}\label{Thovm}
Fix $\mathfrak{g} = \mathfrak{sl}_{n+1}(\mathbb{C})$ as usual.
Given any set of holes $\mathcal{H} = \{ H_1, \dots, H_l \}$, each of
which has size at least $2$, and a weight $\lambda \in \bigcap_{j=1}^l
\Lambda^+_{H_j}$, the character of the higher order Verma module
$\mathbb{M}(\lambda, \mathcal{H})$ is not log-concave along at least one
type $A$ simple root direction.
\end{theorem}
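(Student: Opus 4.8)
The plan is to localize the problem to a small region of the weight lattice where $\mathbb{M}(\lambda,\mathcal{H})$ coincides with a \emph{single-hole} higher order Verma module, and there carry out a direct generalization of the computation in Example~\ref{E2orderVerma}. Since $\mathbb{M}(\lambda,\mathcal{H})=\mathbb{M}(\lambda,\mathcal{H}^{\min})$ and $\mathcal{H}^{\min}\subseteq\mathcal{H}$, we may assume every $H_j$ is irredundant; in particular $H_j\not\subseteq H_{j'}$ for $j\neq j'$, and all $|H_j|\geq2$. Write $n_i:=\lambda(h_i)+1\geq1$, $\nu_j:=w_{H_j}\bullet\lambda=\lambda-\sum_{i\in H_j}n_i\alpha_i$, and let $N_j=U\mathfrak{g}\cdot\prod_{i\in H_j}f_i^{n_i}m_\lambda\cong M(\nu_j)$ be the corresponding Verma submodule of $M(\lambda)$, so $\ch\mathbb{M}(\lambda,\mathcal{H})=\ch M(\lambda)-\ch\big(\textstyle\sum_j N_j\big)$. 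Among all the holes, fix $H_1$ minimizing $b-a$, where $a:=\min H_1$ and $b$ is the second-smallest element of $H_1$; since holes are independent, $b\geq a+2$, no element of $H_1$ lies in $(a,b)$, and $b\pm1\notin H_1$.

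Set $\bar\gamma:=\alpha_{a+1}+\cdots+\alpha_{b-1}$ and consider the three weights $\mu_r:=\nu_1-\bar\gamma-r\alpha_b$ for $r=0,1,2$. The \emph{reduction claim} is that $\mu_r\leq\nu_1$ but $\mu_r\not\leq\nu_j$ for all $j\neq1$, so that $\big(\sum_j N_j\big)_{\mu_r}=(N_1)_{\mu_r}$ and hence $\dim\mathbb{M}(\lambda,\mathcal{H})_{\mu_r}=\dim M(\lambda)_{\mu_r}-\dim M(\nu_1)_{\mu_r}$. Indeed $\mu_r=\nu_1-(\bar\gamma+r\alpha_b)\leq\nu_1$, while $\nu_j-\mu_r=\sum_{i\in H_1}n_i\alpha_i-\sum_{i\in H_j}n_i\alpha_i+\bar\gamma+r\alpha_b$; if $H_j$ contained an index $i\notin[a,b]$, the coefficient of $\alpha_i$ here would be $-n_i<0$, so $\mu_r\not\leq\nu_j$. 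Thus $\mu_r\leq\nu_j$ forces $H_j\subseteq[a,b]$; but an independent subset of $[a,b]$ of size $\geq2$ either equals $\{a,b\}$ (then $H_j\subseteq H_1$, impossible) or contains some $i^*\in(a,b)$, in which case the two smallest elements of $H_j$ have gap strictly less than $b-a$, contradicting the choice of $H_1$. This proves the claim and reduces everything to the single hole $H_1$.

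For the single-hole module, $\ch\mathbb{M}(\lambda,\{H_1\})=\ch M(\lambda)-\ch M(\nu_1)$, and since $\prod_{\beta}(1-e^{-\beta})^{-1}$ is the generating function of the Kostant partition function $K$, this gives $\dim\mathbb{M}(\lambda,\{H_1\})_{\nu_1-\gamma}=K(\Lambda_0+\gamma)-K(\gamma)$ for $\gamma\geq0$, with $\Lambda_0:=\lambda-\nu_1=\sum_{i\in H_1}n_i\alpha_i$. With $g(t):=K(\bar\gamma+t\alpha_b)$ and $f(t):=K(\Lambda_0+\bar\gamma+t\alpha_b)$, the three relevant dimensions are $f(r)-g(r)$, $r=0,1,2$. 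Two elementary facts about $K$ finish the computation: (i) $K$ of an interval $\alpha_p+\cdots+\alpha_q$ equals $2^{q-p}$; and (ii) along an $\alpha_b$-line, $K(\delta+s\alpha_b)$ is weakly increasing in $s$ and becomes constant once $s$ is at least the sum of the coefficients of $\alpha_{b-1}$ and $\alpha_{b+1}$ in $\delta$ (since every extra $\alpha_b$ beyond that point must be a copy of the simple root $\alpha_b$, giving a bijection at each step). From (i), $g(0)=2^{b-a-2}$ and $g(1)=2^{b-a-1}$; from (ii) (using $b\pm1\notin H_1$, so $\bar\gamma$ and $\Lambda_0+\bar\gamma$ have $\alpha_{b-1}$-coefficient $1$ and $\alpha_{b+1}$-coefficient $0$, and $\Lambda_0$ contributes $n_b\geq1$ extra copies of $\alpha_b$) we get $g(1)=g(2)$ and $f(0)=f(1)=f(2)=:A\geq K(\alpha_a+\cdots+\alpha_b)=2^{b-a}$. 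Hence $A-g(1)\geq2^{b-a-1}>0$, and
\[
\big(\dim\mathbb{M}(\lambda,\{H_1\})_{\mu_1}\big)^2=(A-g(1))^2<(A-g(0))(A-g(1))=\dim\mathbb{M}(\lambda,\{H_1\})_{\mu_0}\cdot\dim\mathbb{M}(\lambda,\{H_1\})_{\mu_2},
\]
which holds because $g(0)<g(1)$. Combined with the reduction claim, this exhibits the failure of log-concavity of $\ch\mathbb{M}(\lambda,\mathcal{H})$ along the direction $\alpha_b$ at the weight $\mu_1$; the case $n=3$, $H_1=\{1,3\}$ recovers Example~\ref{E2orderVerma}.

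I expect the reduction claim — pinning down a specific hole $H_1$ (the one of minimal gap) together with a triple of test weights that genuinely escape the ``shadows'' $\{\mu\leq\nu_j\}$ of every other hole — to be the main obstacle, since it is the only place where the genuinely multi-hole structure of $\mathbb{M}(\lambda,\mathcal{H})$ must be controlled. Past that point, the single-hole estimate is a routine generalization of the worked example, resting only on the two elementary properties of Kostant partition functions stated above.
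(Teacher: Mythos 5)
Your reduction claim has a gap, and it is the crux of the argument. When you conclude that an index $i \in H_j$ with $i \notin [a,b]$ forces the $\alpha_i$-coefficient of $\nu_j - \mu_r$ to equal $-n_i < 0$, you are implicitly assuming $i \notin H_1$; but if $i > b$ lies in $H_1 \cap H_j$, that coefficient is $n_i - n_i = 0$ and no obstruction arises, so the inference ``$\mu_r \leq \nu_j$ forces $H_j \subseteq [a,b]$'' fails. This is not a hypothetical scenario. Take $\mathfrak{g} = \mathfrak{sl}_6(\mathbb{C})$, $\lambda = 0$, and $\mathcal{H} = \{ H_1, H_2 \}$ with $H_1 = \{1,3,5\}$ and $H_2 = \{2,5\}$: both holes are independent, irredundant, and of size $\geq 2$. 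Your ``minimal gap'' rule selects $H_1$ (gap $b-a = 2$ versus $3$), giving $a=1$, $b=3$, $\bar\gamma = \alpha_2$, $\nu_1 = -\alpha_1 - \alpha_3 - \alpha_5$, and $\mu_r = -\alpha_1 - \alpha_2 - (1+r)\alpha_3 - \alpha_5$. But $\nu_2 = -\alpha_2 - \alpha_5$ gives $\nu_2 - \mu_r = \alpha_1 + (1+r)\alpha_3 \geq 0$, so $\mu_r \leq \nu_2$ and $(N_2)_{\mu_r}$ is one-dimensional, not zero. Your identity $\dim \mathbb{M}(\lambda,\mathcal{H})_{\mu_r} = \dim M(\lambda)_{\mu_r} - \dim M(\nu_1)_{\mu_r}$ is therefore false, and the dimensions you compute are not the dimensions of the weight spaces of $\mathbb{M}(\lambda,\mathcal{H})$.

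The paper's proof chooses the hole by a different rule precisely to avoid this: it selects $H_{j_0}$ maximizing the first element $i_1^{(j)}$, breaking ties by minimizing the second. The test weight $\lambda + \mu_p$ then has zero coefficient on every $\alpha_i$ with $i < i_1$, so any competing $N_j$ with $i_1^{(j)} < i_1$ contributes nothing at that weight (all weights of $N_j$ involve $\alpha_{i_1^{(j)}}$ with strictly positive coefficient). In the example above the paper would take $H_2 = \{2,5\}$, and the exclusion check then goes through. Your selection rule instead prioritizes a hole that may share its elements above $b$ with a competing hole whose first element sits inside the gap $(a,b)$ --- exactly the configuration that independence alone does not protect against. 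The single-hole half of your argument (the Kostant-partition-function estimates along an $\alpha_b$-string, reducing to $H' = \{a,b\}$) runs parallel to the paper's and looks sound; the overall strategy of reducing to a single hole is also the paper's. But the reduction claim is the heart of the proof, and as written it does not hold, so the proposal does not establish the theorem.
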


\begin{proof}
We first prove the case where $\mathcal{H}$ consists of a single (hence
minimal) hole: $\mathcal{H} = \{ H \}$, where $|H| = m \geq 2$. List $H =
\{ i_1 < \cdots < i_m \} \subset [n]$; the corresponding $m$th order
Verma module (as in~\eqref{EHOVerma}) is
\[
\mathbb{M}(\lambda, \{ H \}) := M(\lambda) / M(\lambda - l_1 \alpha_{i_1}
- \cdots - l_m \alpha_{i_m}), \quad \text{where} \quad l_r :=
\lambda(h_{i_r})+1\ \forall r \in [m].
\]
Denote by $K_\lambda(\cdot), K_H(\cdot)$ the KPFs of the Verma modules in
the numerator and denominator, respectively. We now show that their
difference is not log-concave along the $\alpha_{i_2}$-direction; the
proof can be adapted to proceed along the $\alpha_{i_r}$ direction for
any $r \in [m]$.

Set $\beta = \alpha_{i_2}$ and choose $\mu = -\sum_{i=i_1+1}^{i_2-1}
\alpha_i - \beta - \sum_{r=1}^m l_r \alpha_{i_r}$. We will show that
log-concavity fails for the weight multiplicities at $\lambda +
(\mu+\beta), \lambda + \mu, \lambda + (\mu-\beta)$.

To show this, first note that any decomposition of $\mu \pm \beta$ or
$\mu$ as a sum of negative roots involves each $-\alpha_{i_r}$
individually, for $r>2$.
Thus, we obtain the same multiplicities by replacing $\mu$ by $\mu' =
-\sum_{i=i_1+1}^{i_2-1} \alpha_i - \beta - l_1 \alpha_{i_1} - l_2
\alpha_{i_2}$, and $H$ by $H' = \{ i_1, i_2 \}$
-- i.e., replacing the Verma in the denominator by $M(\lambda - l_1
\alpha_{i_1} - l_2 \alpha_{i_2})$.

We now compute the weight space multiplicities of $M(\lambda)$ at
$\lambda + \mu', \lambda + \mu' \pm \beta$ -- in other words, we (replace
$\lambda$ by $0$ and) compute $K_0(\cdot)$ at $-\mu', -\mu' \pm \beta$.
More generally, let $p \in \mathbb{N}$ be arbitrary and consider
\[
-(\mu'+\beta - p \beta) = \sum_{i=i_1+1}^{i_2-1} \alpha_i + l_1
\alpha_{i_1} + (l_2+p) \alpha_{i_2}.
\]
Any decomposition of this into a sum of positive roots would -- akin to
the preceding paragraph -- involve adding $(l_1-1)$ terms $\alpha_{i_1}$
and $(l_2+p-1)$ terms $\alpha_{i_2}$ individually, to
$\sum_{i=i_1}^{i_2} \alpha_i$. Thus $K_0(-(\mu'+\beta - p \beta)) =
K_0(\sum_{i=i_1}^{i_2} \alpha_i)$. But decomposing this sum into positive
type $A$ roots corresponding to a union of contiguous sub-intervals of
$[i_1, i_2]$ involves placing (or not placing) ``barriers/separators'' at
any permissible positions between consecutive entries in $[i_1, i_2]$.
Thus $K_0(\sum_{i=i_1}^{i_2} \alpha_i) = 2^{i_2-i_1}$, which implies from
above that
\[
K_0(-(\mu'+\beta - p \beta)) = 2^{i_2-i_1}, \quad \forall p \in
\mathbb{N}.
\]

We next compute
\[
K_{H'}(-(\mu'+\beta - p \beta)) = K_0 \left( \sum_{i=i_1+1}^{i_2-1}
\alpha_i + p \alpha_{i_2} \right).
\]
Using the same arguments as above, it follows that
\[
K_{H'}(-(\mu'+\beta)) = 2^{i_2-i_1-2}, \qquad
K_{H'}(-(\mu'-p\beta)) = 2^{i_2-i_1-1}\ \text{for}\ p \geq 0.
\]
Putting together these weight multiplicities,
\begin{align}\label{Ectrex}
\begin{aligned}
\dim \mathbb{M}(\lambda, \{ H \})_{\lambda+(\mu+\beta)} = &\
2^{i_2-i_1-2} \cdot 3,\\
\dim \mathbb{M}(\lambda, \{ H \})_{\lambda+\mu} =
\dim \mathbb{M}(\lambda, \{ H \})_{\lambda+(\mu-\beta)} = &\
2^{i_2-i_1-2} \cdot 2.
\end{aligned}
\end{align}
This shows that $\ch \mathbb{M}(\lambda, \{ H \})$ is not log-concave.

We now come to the general case. Enumerate the minimal holes
$\mathcal{H}^{\min} = \{ H_1, \dots, H_l \}$; by assumption, $|H_j| \geq
2\ \forall j$. We choose a hole from $\mathcal{H}^{\min}$ via
the following algorithm:
\begin{enumerate}
\item List the elements of each $H_j$ as $1 \leq i_1^{(j)} < i_2^{(j)} <
\cdots$. Now define $i_1 := \max_{j \in [l]} i_1^{(j)}$ and $J_1 := \{ j
\in [l] : i_1^{(j)} = i_1 \}$.

\item Next, from among these $j$, define $i_2$ to be the smallest ``next
element'', i.e., $i_2 := \min_{j \in J_1} i_2^{(j)}$. Also define $J_2 :=
\{ j \in J_1 : i_2^{(j)} = i_2 \}$.

%
%

\item From this set $J_2$, choose any index $j_0$ and fix that minimal
hole $H_{j_0}$.
\end{enumerate}

Now we proceed. As in the special case $\mathcal{H} = \{ H \}$ above, set
$\beta = \alpha_{i_2}$ and $\mu = - \sum_{i=i_1+1}^{i_2-1} \alpha_i -
\beta - \sum_{r=1}^m l_r \alpha_{i_r}$, where $l_r := \lambda(h_{i_r})+1$
for all $r \in [m]$ as above. We show that the log-concavity of $\ch
\mathbb{M}(\lambda, \mathcal{H})$ fails at $\lambda + (\mu + \beta),
\lambda + \mu, \lambda + (\mu - \beta)$.

Given $p \in \mathbb{N}$, define $\mu_p := \mu + \beta - p
\beta$. We claim that the weight space
\begin{equation}\label{Eclaim}
V_{\lambda+\mu_p} = 0 \  \forall p \in \mathbb{N}, \quad \text{where}
\quad V := \sum_{j \in [l], \ j \neq j_0} U \mathfrak{g} \cdot \prod_{i
\in H_j} f_i^{\lambda(h_i)+1} \cdot m_\lambda.
\end{equation}
As $\mathbb{M}(\lambda, \mathcal{H}) \cong \mathbb{M}(\lambda, \{ H_{j_0}
\}) / V$, showing~\eqref{Eclaim} would finish the proof, since it reduces
the computation of weight space dimensions for all $p$ to the previously
considered special case~\eqref{EHOVerma}:
\[
\dim \mathbb{M}(\lambda, \mathcal{H})_{\lambda+\mu_p} = \dim
\mathbb{M}(\lambda, \{ H_{j_0} \})_{\lambda+\mu_p}\ \forall p \geq 0,
\]
and these dimensions were shown above to violate log-concavity for
$p=0,1,2$ in~\eqref{Ectrex}.

We thus conclude by showing~\eqref{Eclaim}. Fix $p \in \mathbb{N}$ and $j
\in [l] \setminus \{ j_0 \}$, and list $H_j = \{ i_1^{(j)} < \cdots <
i_{m'}^{(j)} \}$, where $m' \geq 2$. It suffices to show the sub-claim
that $\dim (V_j)_{\lambda+\mu_p} = 0$, where we set
\[
V_j := U \mathfrak{g} \cdot \prod_{i \in H_j} f_i^{\lambda(h_i)+1} \cdot
m_\lambda \cong M \left( \lambda - \sum_{r=1}^{m'}
(\lambda(h_{i_r^{(j)}})+1) \alpha_{i_r} \right)
\]
for compactness of notation.

To show the sub-claim, list the elements of the hole $H_{j_0}$ as $\{ i_1
< \cdots < i_m \}$ for some $m \geq 2$, and consider two cases for the
index $i_1^{(j)}$ in $H_j$.
If $i_1^{(j)} < i_1$ then all weights of $V_j$ are of the form $\lambda -
\alpha_{i_1^{(j)}} - \sum_{i \in I} a_i \alpha_i$ for $a_i \in
\mathbb{N}$; as the $\alpha_i$ are linearly independent in
$\mathfrak{h}^*$, this would never yield $\lambda+\mu_p$. 

Else by choice of $i_1$ in the algorithm above, $i_1^{(j)} = i_1$, i.e.\
$j \in J_1$. By that same algorithm, now we must have $i_2^{(j)} \geq
i_2$. Hence all $i_r^{(j)} \geq i_2$ for all $r \geq 2$. Now if any
$i_r^{(j)} \not\in H_{j_0}$ then the same weight consideration in the
preceding paragraph shows that $\dim (V_j)_{\lambda+\mu_p} = 0$.

This brings us to the case where all $i_r^{(j)} \in H_{j_0}$. But then
$H_j \subseteq H_{j_0}$, which violates the minimality/irredundancy of
the holes $\mathcal{H}^{\min} = \{ H_1, \dots, H_l \}$. This
contradiction shows that $\dim (V_j)_{\mu_p} = 0$ for $j \neq j_0$, which
in turn shows~\eqref{Eclaim} and completes the proof.
\end{proof}

\begin{remark}
The reason (we suspect) why log-concavity does not go through for higher
order Verma modules is that they cannot be obtained via parabolic
induction. As a prototypical example, let $\mathfrak{g} =
\mathfrak{sl}_2(\mathbb{C}) \oplus \mathfrak{sl}_2(\mathbb{C})$, and
consider the ``simplest'' second order Verma module -- the one with highest
weight $(0,0)$. This is the module
\[
\mathbb{M}((0,0), \{ \{ 1, 2 \} \}) = M(0,0) / M(-2,-2),
\]
and it has zero or one dimensional weight spaces, with weights $-p
\alpha_1, -p \alpha_2$ for $p \in \mathbb{N}$. Already by considering its
character (a sum of two geometric series with ``ratios''
$e^{-\alpha_1}$ and $e^{-\alpha_2}$) we see that this is not a
nontrivial product, hence the module is not induced from a submodule over
a proper Lie subalgebra. This is unlike every parabolic Verma
module over every semisimple Lie algebra, for which the induced
module construction~\eqref{Egvm-tensor} was crucial in proving
log-concavity above.

That said, in this specific instance the character is indeed log-concave
along all root directions; we study this in greater detail in the next
section.
\end{remark}

\section{Characters of usual, parabolic, and higher order Vermas over products of type $A$}\label{Sfinal}

In this concluding section, we generalize our main results in the
previous sections (Theorems~\ref{T1} and~\ref{T4}), going from the family
$\{ \mathfrak{sl}_{n+1} : n \in \mathbb{N} \}$ to a larger family of
complex semisimple Lie algebras. More precisely, we show that
(parabolic) Verma module characters over this larger family are
log-concave, but higher order Verma characters are not.

Fix positive integers $T$ and $n_1, \dots, n_T$, let $\mathfrak{g}_t =
\mathfrak{sl}_{n_t+1}(\mathbb{C})$, and set
\[
\mathfrak{g} = \bigoplus_{t=1}^T \mathfrak{sl}_{n_t+1}(\mathbb{C}) =
\oplus_{t=1}^T \mathfrak{g}_t.
\]
Correspondingly, we set notation: the Dynkin diagram is a disjoint union
of type $A$ connected components, with sets of nodes
\begin{equation}\label{Enodes}
I_t := (t,[n_t]), \qquad
I = \bigsqcup_{t=1}^T I_t = \{ (t,i) : t \in [T], i \in [n_t] \}.
\end{equation}
The set of positive roots is the union of the individual positive
root-sets:
$\Delta = \sqcup_{t=1}^T \Delta_t$,
and similarly for the simple roots. The space of ``highest weights'' is
$\mathfrak{h}^* = \bigoplus_{t=1}^T \mathfrak{h}_t^*$, and given $J
\subseteq I$, $J$ and the space of $J$-dominant integral weights
$\Lambda^+_J \subset \mathfrak{h}^*$ split similarly:
\[
J = \bigsqcup_{t=1}^T J_t, \ J_t = J \cap I_t; \qquad
\Lambda^+_J = \bigoplus_{t=1}^T \Lambda^+_{J_t},
\]
where $\Lambda^+_{J_t} \subset \mathfrak{h}^*_t$. We conclude this work
by showing that the log-concavity of parabolic Verma characters extends
to products of $\mathfrak{sl}_{n+1}$'s, but this again fails for higher
order Vermas (unless $\mathfrak{sl}_2$'s are involved -- in which case
one only has singleton holes in each Dynkin component).

\begin{theorem}\label{Thvm}
{\em (First order case)}
Given $J \subseteq I$, and a highest weight $\lambda =
(\lambda_t)_{t=1}^T \in \Lambda^+_J$, the normalized shifted character
$N(x^\delta \cdot \ch M(\lambda,J))$ of every parabolic Verma module is
Lorentzian, and hence $N(x^\delta \cdot \ch M(\lambda,J))$ is continuously log-concave and $\ch M(\lambda,J)$ is discretely (along all
root directions in $\Delta$) log-concave. Here, $\delta \in \mathbb{N}^d$
is arbitrary, with $d = \sum_{t=1}^T (n_t+1)$.

{\em (Higher order case)}
Next, let $H = \sqcup_{t=1}^T H_t$ be an independent set of simple
roots/nodes in the Dynkin diagram. The following are equivalent for a
weight $\lambda \in \Lambda^+_H$:
\begin{enumerate}
\item The character of the higher order Verma module $\mathbb{M}(\lambda,
\{ H \})$ is discretely log-concave along all root directions in
$\Delta$.

\item $\ch \mathbb{M}(\lambda, \{ H \})$ is discretely log-concave along
all simple root directions.

\item Either $H$ is a singleton set, or for every $t \in [T]$, either
$H_t$ is empty or $H_t$ is a singleton and equal to all of $I_t$  (i.e.,
$n_t = 1$).
\end{enumerate}
\end{theorem}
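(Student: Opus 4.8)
The plan has three pieces: the first-order assertion, and then the cycle $(1)\Rightarrow(2)\Rightarrow(3)\Rightarrow(1)$ for the higher-order part. For the \emph{first-order} assertion I would argue exactly as in the proof of Theorem~\ref{T1}, using that everything splits over the simple summands. Since $\mathfrak g=\bigoplus_{t=1}^T\mathfrak g_t$ with $\mathfrak g_t=\mathfrak{sl}_{n_t+1}(\mathbb C)$, the parabolic Verma module is an outer tensor product, $M(\lambda,J)\cong\bigotimes_t M_{\mathfrak g_t}(\lambda_t,J_t)$ with $J_t=J\cap I_t$, so $\ch M(\lambda,J)=\prod_t\ch M_{\mathfrak g_t}(\lambda_t,J_t)$ is a product in pairwise disjoint blocks of $n_t+1$ variables. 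Writing $\delta=(\delta_1,\dots,\delta_T)$ with $\delta_t\in\mathbb N^{n_t+1}$, we get $x^\delta\cdot\ch M(\lambda,J)=\prod_t\bigl(x^{\delta_t}\cdot\ch M_{\mathfrak g_t}(\lambda_t,J_t)\bigr)$; each factor is denormalized Lorentzian by Theorem~\ref{T1}, hence so is the product by Theorem~\ref{thm:BHdoubleLC}(3), and Theorem~\ref{thm:BHdoubleLC}(1)--(2) gives the two log-concavity statements. (Discrete log-concavity along $\Delta=\sqcup_t\Delta_t$ is what Theorem~\ref{thm:BHdoubleLC}(2) yields, since cross-block differences $\varepsilon_i-\varepsilon_j$ never occur among the monomials, by homogeneity.) For the higher-order part, $(1)\Rightarrow(2)$ is trivial. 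The engine for the remaining two implications is the two-step resolution~\eqref{EHOVerma}, which gives $\ch\mathbb M(\lambda,\{H\})=\ch M(\lambda)-\ch M(\lambda-\gamma)$ with $\gamma:=\sum_{i\in H}(\lambda(h_i)+1)\alpha_i=\sum_t\gamma_t$ (each $\gamma_t$ supported on block $t$); since the block-diagonal $\mathfrak g$ has $K=\prod_tK^{(t)}$, one gets
\[
\dim\mathbb M(\lambda,\{H\})_\mu=\prod_{t=1}^TK^{(t)}\bigl((\lambda-\mu)_t\bigr)-\prod_{t=1}^TK^{(t)}\bigl((\lambda-\mu)_t-\gamma_t\bigr).
\]

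To prove $(3)\Rightarrow(1)$: if $H$ is a singleton then $\mathbb M(\lambda,\{H\})$ is literally a parabolic Verma module, handled by the first-order case. Otherwise (3) forces every $\mathfrak g_t$ to be untouched by $H$ (i.e.\ $H_t=\emptyset$) or an $\mathfrak{sl}_2$ with $H_t=I_t$; let $S=\{t:H_t\neq\emptyset\}$. Using $\ch M(\lambda-\gamma)=e^{-\gamma}\ch M(\lambda)$ and peeling off the untouched blocks, $\ch\mathbb M(\lambda,\{H\})=F\cdot C$ in disjoint variables, where $F=\prod_{t\notin S}\ch M_{\mathfrak g_t}(\lambda_t)$ is a direct-sum Verma character -- hence, after any shift, denormalized Lorentzian by the first-order case, so discretely log-concave along all roots of $\bigoplus_{t\notin S}\mathfrak g_t$ -- and $C=\prod_{t\in S}\ch M_{\mathfrak{sl}_2}(\lambda_t)\cdot(1-e^{-\gamma})$. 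A direct expansion identifies $C$, up to a monomial, with $\sum\{\prod_{t\in S}u_t^{k_t}:k\in\mathbb N^{S},\ k_t<\lambda(h_{(t,1)})+1\text{ for some }t\}$ (with $u_t=e^{-\alpha_{(t,1)}}$): a $0/1$ ``staircase-complement'' series whose coefficient slice along the unique root of each $\mathfrak{sl}_2$ summand is either constantly $1$ or of the form $1,\dots,1,0,0,\dots$, hence log-concave with no internal zeros. Thus $C$ is discretely log-concave along all roots of $\bigoplus_{t\in S}\mathfrak{sl}_2$; and a product in disjoint variables of series with nonnegative coefficients, each discretely log-concave along the roots of its own block, is discretely log-concave along all roots (the coefficient of a bi-monomial factors, so one just multiplies one factor's inequality by the square of the other's nonnegative coefficient). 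Hence $\ch\mathbb M(\lambda,\{H\})$ is discretely log-concave along all of $\Delta$.

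To prove $(2)\Rightarrow(3)$ I would establish $\neg(3)\Rightarrow\neg(2)$ by exhibiting a simple root $\beta$ and a weight $\mu$ violating log-concavity (if $H=\emptyset$ then $\mathbb M(\lambda,\{H\})=0$ and (3) holds, so assume $H\neq\emptyset$). If (3) fails, $H$ is not a singleton and either (i) some $|H_{t_*}|\geq2$, or (ii) all $|H_t|\leq1$ but some touched block has $n_{t_*}\geq2$. In case (i), the proof of Theorem~\ref{Thovm} gives, inside $\mathfrak g_{t_*}$ alone, a simple root $\beta$ and a $\beta$-string with single-block multiplicities $3N,2N,2N$ as in~\eqref{Ectrex}; taking $\mu$ equal to that weight on block $t_*$ and equal to $\lambda-\gamma_t$ on every other block, the displayed dimension formula turns these into $(4P_*-1)N,\ (4P_*-2)N,\ (4P_*-2)N$ with $P_*=\prod_{t\neq t_*}K^{(t)}(\gamma_t)\geq1$, and $(4P_*-2)^2<(4P_*-1)(4P_*-2)$. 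In case (ii), choose a touched block $t_1\neq t_*$ (so $H_{t_1}=\{(t_1,i_1)\}$ is a singleton), set $\beta=\alpha_{(t_1,i_1)}$, and take $\mu$ with $(\lambda-\mu)_{t_1}=\gamma_{t_1}$, $(\lambda-\mu)_{t_*}=\gamma_{t_*}+\alpha_{j_0}$ for a neighbour $j_0$ of the node of $H_{t_*}$ (this is where $n_{t_*}\geq2$ enters), and $(\lambda-\mu)_t=\gamma_t$ otherwise; using that $K^{(t_*)}$ of ``$\gamma_{t_*}$ plus an adjacent simple root'' equals $2$ while $K^{(t)}$ of any nonnegative multiple of a single simple root equals $1$, the dimension formula yields $\beta$-string multiplicities $2,1,1$, so $1^2<2\cdot1$.

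The first-order case and $(1)\Rightarrow(2)$ are purely formal, and $(3)\Rightarrow(1)$ needs only the right bookkeeping: recognizing the clean product form $F\cdot C$ and the $0/1$ description of $C$. The genuine obstacle is $(2)\Rightarrow(3)$, especially case (ii): one must organize the (somewhat fiddly) dichotomy of $\neg(3)$ correctly and then manufacture a Kostant-partition value of $2$ out of the extra room in a block larger than $\mathfrak{sl}_2$, while ensuring the subtracted Verma character does not wash the effect out -- tracking this through the block-diagonal KPF bookkeeping is the delicate step.
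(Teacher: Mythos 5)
Your proposal is correct and follows essentially the same route as the paper: the first-order case via the tensor factorization over blocks plus closure of denormalized Lorentzian polynomials under products, $(3)\Rightarrow(1)$ by splitting off the $\mathfrak{sl}_2^{\oplus}$ piece and observing its $0/1$ "staircase-complement" character has log-concave coordinate slices, and $\neg(3)\Rightarrow\neg(2)$ by the same two-case dichotomy with the same choices of $\mu$ and $\beta$. The only cosmetic difference is that in case (i) you carry $P_*=\prod_{t\neq t_*}K^{(t)}(\gamma_t)$ as a parameter $\geq1$, whereas the paper notes (via independence of each $H_t$) that $P_*=1$, so your $(4P_*-1)N,(4P_*-2)N,(4P_*-2)N$ reduces to the paper's $3N,2N,2N$; either way the log-concavity inequality fails.
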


\begin{proof}
For the first order case, standard results \cite{Hum} yield that (using
the above notation)
\begin{equation}\label{Epvm-tensor}
M(\lambda, J) \cong \bigotimes_{t=1}^T M_{\mathfrak{g}_t}(\lambda_t,
J_t).
\end{equation}
From this it follows -- upon writing $\delta = (\delta_t)_{t=1}^T$ and
decomposing the $d$ variables $x$ into individual $(n_t+1)$-tuples
$x^{(t)}$ -- that
\[
N(x^\delta \cdot \ch M(\lambda,J)) = \prod_{t=1}^T N((x^{(t)})^{\delta_t}
\cdot \ch M_{\mathfrak{g}_t}(\lambda_t, J_t)),
\]
and this is Lorentzian by Theorem~\ref{T1}, hence $N(x^\delta \cdot \ch
M(\lambda,J))$ is continuously log-concave and $\ch M(\lambda,J)$ is
discretely log-concave by Theorem~\ref{thm:BHdoubleLC}.

We now come to the higher order case. Clearly $(1) \implies (2)$. We next
assume~(3) and show~(1). First if $H$ is a singleton set, say $H = \{ i_1
\} \subseteq I_1$ without loss of generality, then by~\eqref{EHOVerma}
and~\eqref{Epvm-tensor},
\[
\mathbb{M}(\lambda, \{ H \}) = M(\lambda, \{ i_1 \}) \cong
M_{\mathfrak{g}_1}(\lambda_1, \{ i_1 \}) \otimes \bigotimes_{t=2}^T
M_{\mathfrak{g}_t}(\lambda_t),
\]
and we obtain~(1) by the previous part.

Otherwise, first if all $H_t$ are empty then $\mathbb{M}(\lambda, \{
H \}) = M(\lambda)$, and we again reduce to the previous part. Else
assume without loss of generality that $H = \{ i_1, \dots, i_{t_0} \}$
for some $t_0 \in [T]$, with $H_t = \{ i_t \} = I_t$ for $t \in
[t_0]$. Thus $\mathfrak{g}_t \cong \mathfrak{sl}_2(\mathbb{C})$ for $t
\in [t_0]$. Then by~\eqref{Ecommute},
\begin{align*}
U \mathfrak{g} \cdot \prod_{i \in H} f_i^{\lambda(h_i)+1} \cdot m_\lambda
= &\ U \mathfrak{g} \cdot \prod_{t=1}^{t_0} f_{i_t}^{\lambda(h_{i_t})+1}
\cdot m_\lambda\\
\cong &\ \bigotimes_{t=1}^{t_0} M_{\mathfrak{g}_t}(\lambda_t -
(\lambda_t(h_{i_t})+1)\alpha_{i_t}) \otimes \bigotimes_{t=t_0+1}^T
M_{\mathfrak{g}_t}(\lambda_t).
\end{align*}
It follows by setting
\[
\mathfrak{g}' := \oplus_{t=1}^{t_0} \mathfrak{g}_t \cong
\mathfrak{sl}_2(\mathbb{C})^{\oplus t_0},
\qquad \lambda' := \oplus_{t=1}^{t_0} \lambda_t
\]
that
\[
\mathbb{M}(\lambda, \{ H \}) \cong \mathbb{M}_{\mathfrak{g}'}(\lambda',
\{ H \}) \otimes \bigotimes_{t=t_0+1}^T M_{\mathfrak{g}_t}(\lambda_t).
\]

As Verma module characters (i.e.\ KPFs) are log-concave \cite{HMMS},
and the characters of the tensor factors here are in disjoint sets
of variables, to deduce~(1) it suffices to show that $\ch
\mathbb{M}_{\mathfrak{g}'}(\lambda', \{ H \})$ is discrete log-concave
along the (simple) root directions $\alpha_{i_1}, \dots,
\alpha_{i_{t_0}}$. But
\begin{equation}\label{Edefn}
\mathbb{M}_{\mathfrak{g}'}(\lambda', \{ H \}) \cong
\frac{\otimes_{t=1}^{t_0}
M_{\mathfrak{g}_t}(\lambda_t)}{\otimes_{t=1}^{t_0}
M_{\mathfrak{g}_t}(\lambda_t - (\lambda_t(h_{i_t})+1)\alpha_{i_t})},
\end{equation}
and all weight spaces in the numerator and denominator are
one-dimensional, by $\mathfrak{sl}_2$-theory. Since the positive/simple
roots in $\mathfrak{g}'$ are pairwise orthogonal, the character of
$\mathbb{M}_{\mathfrak{g}'}(\lambda', \{ H \})$ ``equals'' the
set-difference of ``doubled lattice'' points in shifted negative
orthants:
\[
{\bf v} - 2\mathbb{N}^{t_0} \setminus {\bf w} - 2\mathbb{N}^{t_0},
\quad \text{where} \quad
{\bf v} = (\lambda_t(h_{i_t}))_{t=1}^{t_0}, \
{\bf w} = (-\lambda_t(h_{i_t})-2)_{t=1}^{t_0}.
\]
Now along any ``downward'' ray parallel to a coordinate axis, i.e.\ a
(simple) root direction, the multiplicities in the quotient module either
form a sequence of ones, or read $1, \dots, 1, 0, 0, \dots$. Both
sequences are log-concave, again yielding~(1).

Finally, we show the contrapositive of the implication $(2) \implies
(3)$. There are two cases: first suppose some $H_t$ has size at least 2,
say $H_T$. Set
\[
\lambda' := \lambda - \sum_{t=1}^{T-1} \sum_{i \in H_t}
(\lambda_t(h_i)+1) \alpha_i = {\rm wt} \prod_{i \in H \setminus H_T}
f_i^{\lambda(h_i)+1} \cdot m_\lambda,
\]
and note by ``$\mathfrak{sl}_2^{\oplus (|H|-|H_T|)}$-theory'' that the
KPF-value $\dim M(\lambda)_{\lambda'} = 1$. So for any
$\mathbb{N}$-linear combination of (simple) roots in $\Delta_T$, say
$\gamma \in \mathbb{N} \Delta_T$, it follows that
\begin{equation}\label{Etemp}
\mathbb{M}(\lambda, \{ H \})_{\lambda' - \gamma}
= \bigotimes_{t=1}^{T-1} \left( \mathbb{C} \prod_{i \in H_t}
f_i^{\lambda(h_i)+1} \cdot m_{\lambda_t} \right) \otimes
\mathbb{M}_{\mathfrak{g}_T}(\lambda_T, \{ H_T \})_{\lambda_T - \gamma}.
\end{equation}

By Theorem~\ref{Thovm}, there exist a weight $\mu \in \mathfrak{h}^*_T$
and a simple root $\beta \in \Delta_T$ such that $\mu + \beta \in -
\mathbb{N} \Delta_T$ and the multiplicities $\dim
\mathbb{M}_{\mathfrak{g}_T}(\lambda_T, \{ H_T \})_\beta$ violate
log-concavity at $\lambda_T + \mu, \lambda_T + \mu \pm \beta$. We are now
done by setting $\gamma = -\mu, -\mu \pm \beta$ in~\eqref{Etemp}.

The other case is when all $H_t$ are singletons or empty (which
information we do not use below), at least two $H_t$ are singletons, and
for at least one of these $t$ we have $n_t > 1$. Thus, say 
$H_{T-1} = \{ i_{T-1} \}$ and $H_T = \{ i_T \}$, and $n_T > 1$. This last
yields $i_0 \in I_T$ which is adjacent to $i_T$ in the Dynkin diagram.
Now set
\[
\mu = \lambda - \alpha_{i_0} - \sum_{i \in H} (\lambda(h_i)+1) \alpha_i,
\qquad \beta = \alpha_{i_{T-1}}.
\]

We will show that $\ch \mathbb{M}(\lambda, \{ H \})$ is not log-concave
at the weights $\mu, \mu \pm \beta$. Indeed, since
$\mathbb{M}(\lambda, \{ H \}) \cong M(\lambda) / M(\mu + \alpha_{i_0})$,
we see that
\[
\dim \mathbb{M}(\lambda, \{ H \})_\mu = \dim M(\lambda)_\mu - \dim
M(\mu + \alpha_{i_0})_\mu = \dim M(\lambda)_\mu - 1 = 1,
\]
where (for expositional sake) we detail the proof of the final equality.
The simple roots occurring in $\lambda - \mu$ are $\{ \alpha_i : i \in H
\}$ and $\alpha_{i_0}$. The only connected Dynkin subdiagram in these is
the edge $i_0 \longleftrightarrow i_T$. Thus,
\[
\dim M(\lambda)_\mu = K(\alpha_{i_0} + (\lambda(h_{i_T})+1)
\alpha_{i_T}),
\]
and this equals~2, either by writing this weight as a sum of simple
roots, or as $(\alpha_{i_0} + \alpha_{i_T})$ plus $\lambda(h_{i_T})$-many
copies of $\alpha_{i_T}$. This calculation also applies to show that
$\dim M(\lambda)_{\mu \pm \beta} = 2$. Hence,
\[
\dim \mathbb{M}(\lambda, \{ H \})_{\mu-\beta} = \dim
M(\lambda)_{\mu-\beta} - \dim M(\mu + \alpha_{i_0})_{\mu-\beta} = 2 - 1 =
1.
\]

On the other hand, $\mu + \beta$ is not in the weights of $M(\mu +
\alpha_{i_0}) = \mu + \alpha_{i_0} - \mathbb{N} \Delta$. Thus,
\[
\dim \mathbb{M}(\lambda, \{ H \})_\mu = \dim M(\lambda)_{\mu+\beta},
\]
which equals 2 from above. Summarizing,
\[
\dim \mathbb{M}(\lambda, \{ H \})_{\mu+\beta} = 2, \qquad
\dim \mathbb{M}(\lambda, \{ H \})_\mu =
\dim \mathbb{M}(\lambda, \{ H \})_{\mu-\beta} = 1,
\]
and log-concavity fails along the $\alpha_{i_{T-1}}$-direction.
\end{proof}

\subsection*{Acknowledgments}

We thank Petter Br\"and\'en, June Huh, Karola M\'esz\'aros, and Daniel
Qin for valuable discussions. We also thank G.V.\ Krishna
Teja and K.\ Hariram for their SageMath computations that revealed the
counterexample in Section~\ref{SG2}.
A.K.~was partially supported by the SwarnaJayanti Fellowship grants
SB/SJF/2019-20/14 and DST/SJF/MS/2019/3 from SERB and DST (Govt.~of
India), a Shanti Swarup Bhatnagar Award from CSIR (Govt.\ of India), and
the DST FIST program 2021 [TPN--700661]. He is also grateful to North
Carolina State University for their hospitality and excellent working
conditions.
J.M.~received support from NSF Grant DMS-2452179 and Simons Foundation Travel Support for
Mathematicians Award MPS-TSM-00007970.




\appendix

\section{Overview of the question of Lorentzianity and log-concavity of
simple highest weight module characters}\label{Sconj}

Here we explain in some detail, how our results extend and make partial
progress on a conjecture of Huh et al.~\cite{HMMS}. Note that
Theorem~\ref{T1} on the log-concavity of parabolic Verma characters
unifies the results in~\cite{HMMS} for two sub-families:
finite-dimensional simples and Vermas. After proving these two results
the authors stated \cite[Conjecture~12]{HMMS}, which asserts the
Lorentzianity of $N(x^\delta \cdot \ch V)$ for a different family
subsuming these two: all simple highest weight modules $\{ V(\lambda) :
\lambda \in \Lambda \}$\footnote{The precise construction of
$V(\lambda)$ for $\lambda \in \mathfrak{h}^* \setminus P^+$ is not
crucial to this work, but we recall it here for completeness. Every
proper submodule $N$ of $M(\lambda)$ is $\mathfrak{h}$-semisimple, i.e.\
has a basis of $\mathfrak{h}$-eigenvectors. In particular, $N$ cannot
contain the one-dimensional $\lambda$-weight space $\mathbb{C} m_\lambda
\subset M(\lambda)$ as it generates $M(\lambda)$. Thus, neither can the
sum $N_{\max}$ of all proper submodules $N$. Now $N_{\max}$ is the unique
maximum submodule of $M(\lambda)$, and $V(\lambda) = M(\lambda) /
N_{\max}$.} where $\Lambda$ is the integral weight
lattice~\eqref{Eweightlattice}.

We now explain how Theorem~\ref{T1} proves additional cases of this
conjecture (i.e., beyond the cases shown in~\cite{HMMS}). In fact, we do
so in the setting of Theorem~\ref{T5}, which is more general in two ways:
first, Huh et al.\ stated and proved their results only for $T=1$, i.e.\
over $\mathfrak{sl}_{n+1}(\mathbb{C})$; second, they made the conjecture
only for integral weights $\lambda$, whereas Theorem~\ref{T5} and
Conjecture~\ref{Conj} above are for all weights $\lambda \in
\mathfrak{h}^*$. Thus, here is a summary of what follows:
\begin{enumerate}
\item For highest weights $\lambda \in \mathfrak{h}^* \cong \mathbb{C}^I$
that are generic (i.e., avoid countably many hyperplanes, and in
particular form a full-dimensional open set), the simple module
$V(\lambda)$ is in fact the Verma $M(\lambda)$, and hence its
$x^\delta$-shifted character is denormalized Lorentzian by~\cite{HMMS}.
These generic highest weights are called \textit{antidominant}, and we
define and discuss them below.

\item Thus, the interesting phenomena occur for the remaining highest
weights $\lambda \in \mathfrak{h}^*$, with the most special points being
the countable (hence zero-dimensional) set of dominant integral weights
$\lambda \in \Lambda^+$ (see~\eqref{Eweightlattice}). For these
$\lambda$, the $x^\delta$-shifted character of $V(\lambda)$
is again denormalized Lorentzian by~\cite{HMMS}.

\item It is the remaining non-generic $\lambda$ -- lying on countably
many hyperplanes -- for which \cite[Conjecture~12]{HMMS} was proposed.
Our Theorem~\ref{T1} affirmatively resolves the conjecture for subsets of
these $\lambda$ in each ``intermediate'' dimension -- see
Example~\ref{Exdim} below. In particular, we show in
Example~\ref{Exgeneric} the conjecture for all \textit{generic
non-antidominant weights} $\lambda$, i.e.\ ones that lie on a unique
hyperplane mentioned in point~(1) and indexed by a simple root.
We stress that we work with non-integral weights $\lambda$
too, unlike~\cite{HMMS} even in the $T=1$ special case.

\item Since we have extended the conjecture in~\cite{HMMS} to
Conjecture~\ref{Conj}, we in fact explain our partial progress via
Theorem~\ref{T5} in the more general setting of $\oplus_{t=1}^T
\mathfrak{sl}_{n_t+1}(\mathbb{C})$. (To understand our progress in the
original situation of \cite{HMMS}, simply set $T=1$ below.)
This progress is via the well-known Jantzen simplicity criterion
\cite[Satz~4]{Jantzen}, which characterizes the parabolic Verma modules
$M(\lambda, J), \ \lambda \in \Lambda_J^+$ which are simple over
$\mathfrak{g} = \oplus_{t=1}^T \mathfrak{sl}_{n_t+1}(\mathbb{C})$. We
also explain how the conjecture in~\cite{HMMS} implies
Conjecture~\ref{Conj} for integral $\lambda$.
\end{enumerate}

\subsection{Generic Verma modules are simple}

In the remainder of this section we proceed in greater detail, for the
reader who may not be well-versed in representations of semisimple Lie
algebras. First say $\mathfrak{g} = \mathfrak{sl}_{n+1}(\mathbb{C})$,
with root system $\Phi = \{ \alpha_{ij} = \varepsilon_i - \varepsilon_j :
1 \leq i \neq j \leq n+1 \}$. Given a root $\alpha_{ij}$, let
$h_{\alpha_{ij}}$ denote the diagonal matrix $E_{ii} - E_{jj}$. Then
\[
[\mathfrak{g}_{\alpha_{ij}}, \mathfrak{g}_{-\alpha_{ij}}] = [ \mathbb{C}
E_{ij}, \mathbb{C} E_{ji}] = \mathbb{C} h_{\alpha_{ij}}.
\]

More generally, now take $\mathfrak{g} = \oplus_{t=1}^T
\mathfrak{sl}_{n_t+1}(\mathbb{C})$ with $T, n_t \geq 1$. Then
$\mathfrak{h} = \oplus_{t=1}^T \mathfrak{h}_t \cong \oplus_{t=1}^T
\mathbb{C}^{n_t}$, and
\[
\Phi = \sqcup_t \Phi_t = \bigsqcup_{t=1}^T \{ \alpha_{ij}^{(t)} : 1 \leq
i \neq j \leq n_t+1 \},
\]
say. The positive roots are $\Phi^+ = \sqcup_t \Phi_t^+ = \sqcup_t \{
\alpha_{ij}^{(t)} : i < j \}$. Moreover, the set of weights, the weight
lattice, and the dominant integral weights decompose into pairwise
orthogonal subspaces/subsets:
\[
\mathfrak{h}^* = \oplus_{t=1}^T \mathfrak{h}_t^* \quad \supset \quad
\Lambda = \oplus_{t=1}^T \Lambda_t \quad \supset \quad \Lambda^+ =
\oplus_{t=1}^T \Lambda_t^+,
\]
where e.g.\ $\Lambda_t^+ = \{ \lambda_t \in \mathfrak{h}_t^* :
\lambda_t(h_{\alpha_{ij}^{(t)}}) \in \mathbb{N} \}$.

With this notation at hand, we now explain the first point in the summary
above. A weight $\lambda \in \mathfrak{h}^*$ is said to be
\textit{antidominant} if $(\lambda+\rho)(h_\alpha) = 2 (\lambda + \rho,
\alpha) / (\alpha,\alpha)$ is not a positive integer, for every positive
root $\alpha$. Now in the present setting of $\mathfrak{g}$, the Weyl
vector $\rho = (\rho_t)_{t=1}^T \in \Lambda^+$, where
\[
\rho_t = \frac{1}{2} \sum_{i<j} \alpha_{ij}^{(t)} = \frac{1}{2} (n_t,
n_t-2, \dots, 2-n_t, -n_t) \in \Lambda_t^+.
\]
Hence for a weight $\lambda = (\lambda_t)_t$ -- we use $\lambda_t =
(\lambda_1^{(t)}, \dots, \lambda_{n_t+1}^{(t)})$ below -- to be
antidominant means that
\[
(\lambda+\rho)( h_{\alpha_{ij}^{(t)}}) = \frac{2 (\lambda + \rho,
\alpha_{ij}^{(t)})}{(\alpha_{ij}^{(t)}, \alpha_{ij}^{(t)})} = \frac{2
(\lambda_t + \rho_t, \alpha_{ij}^{(t)})}{(\alpha_{ij}^{(t)},
\alpha_{ij}^{(t)})} = \lambda^{(t)}_i - \lambda^{(t)}_j + j-i
\]
is not a positive integer, for every $t \in [T]$ and $i<j$ in $[n_t+1]$.
The set of such $\lambda$ is precisely the complement in $\mathfrak{h}^*$
of a countable collection of hyperplanes, hence generic. Moreover:

\begin{theorem}[{Verma module simplicity criterion,
\cite[Theorem~4.8]{Hum}}]\label{Thum}
The Verma module $M(\lambda)$ is simple if and only if $\lambda \in
\mathfrak{h}^*$ is antidominant.
\end{theorem}

Thus, for $\lambda$ generic (antidominant), the simple highest weight
module $V(\lambda)$ has character equal to the usual Kostant partition
function, shifted by $e^\lambda$. Hence by~\eqref{Ecommute} and the $T=1$
result in \cite{HMMS}, $N(x^\delta \cdot \ch V(\lambda))$ is denormalized
Lorentzian for all antidominant $\lambda \in \mathfrak{h}^*$ and all
$\delta$.\medskip

This explains the first point; the second was discussed at length above.
Next, it is not hard to continue beyond~\eqref{Ecommute} and show that
\[
M(\lambda) \cong \bigotimes_{t=1}^T M_t(\lambda_t) \quad \implies \quad
V(\lambda) \cong \bigotimes_{t=1}^T V_t(\lambda_t)
\]
for all $\lambda = (\lambda_t)_t \in \mathfrak{h}^*$,
where $M_t, V_t$ denote Verma and simple modules over
$\mathfrak{sl}_{n_t+1}(\mathbb{C})$, respectively.
Thus $\ch V(\lambda)$ is the product of $\ch V_t(\lambda_t)$ in disjoint
sets of variables $x_1^{(t)}, \dots, x_{n_t+1}^{(t)}$, and so for
integral weights $\lambda$, Conjecture~\ref{Conj} would follow from
\cite[Conjecture~12]{HMMS} (and Theorem~\ref{thm:BHdoubleLC} (3)).

\subsection{Going beyond \cite{HMMS}: Jantzen's criterion}

Finally, we explain the precise progress that Theorem~\ref{T5} makes
beyond the above, in tackling Conjecture~\ref{Conj} (which subsumes the
third point where $T=1$). Begin with $\lambda = (\lambda_t)_t \in
\mathfrak{h}^*$; then for every $(t,i) \in [T] \times [n_t]$ such that
$\lambda_t(h_{t,i}) \in \mathbb{N}$ (here, $h_{t,i} :=
h_{\alpha_{i,i+1}^{(t)}}$), we have that $U \mathfrak{g} \cdot
f_{t,i}^{\lambda_t(h_{t,i})+1} \cdot m_\lambda$ is a proper submodule of
$M(\lambda)$, and hence its image in the quotient $V(\lambda)$ must
vanish. Setting
\begin{equation}\label{EJt}
J_\lambda := \{ (t,i) \in I : \lambda_t(h_{t,i}) \in \mathbb{N} \}, \qquad
J_{\lambda_t} := J_\lambda \cap (t,[n_t]) = J_\lambda \cap I_t,
\end{equation}
it follows that $J_\lambda$ (respectively, $J_{\lambda_t}$) is the unique
maximum set $J$ of nodes for which $\lambda \in \Lambda_J^+$
(respectively, $\lambda_t \in \Lambda_J^+$); in turn, this implies
$M(\lambda, J_\lambda) \twoheadrightarrow V(\lambda)$. Thus,
\textit{$M(\lambda,J_\lambda)$ is the only parabolic Verma that is
possibly simple.}

We now explicitly write out Jantzen's simplicity criterion for
$M(\lambda, J_\lambda)$ over $\mathfrak{g} = \oplus_{t=1}^T
\mathfrak{sl}_{n_t+1}(\mathbb{C})$. First, let
\begin{equation}\label{Epsi}
\Psi^+_\lambda := \{ \beta \in \Phi^+ \setminus \Phi^+_{J_\lambda} :
(\lambda + \rho)(h_\beta) \text{ is a positive integer} \}.
\end{equation}

In words, $\Psi^+_\lambda$ denotes those roots $\beta =
\alpha_{ij}^{(t)}$ for which
(a)~$i<j$;
(b)~$(t,i), \dots, (t,j-1)$ are not all in $J_{\lambda_t}$; and
(c)~$(\lambda + \rho)(h_\beta)$ is a positive integer.
Now we have:

\begin{theorem}[{Jantzen's simplicity criterion,
\cite[Satz~4]{Jantzen}}]\label{Tjantzen}
Suppose $\mathfrak{g} = \oplus_{t=1}^T \mathfrak{sl}_{n_t+1}(\mathbb{C})$
and $\lambda \in \mathfrak{h}^*$. Then the parabolic Verma module
$M(\lambda, J)$ is simple if and only if $J = J_\lambda$ and ``condition
$(M+)$'' is satisfied:

For all $\beta \in \Psi_\lambda^+$, there is a root $\gamma \in
(\mathbb{Q} \Phi_{J_\lambda} + \mathbb{Q} \beta) \cap \Phi$ such that
$(\lambda + \rho)(h_\gamma) = 0$ and $s_\beta(\gamma) \in
\Phi_{J_\lambda}$.
\end{theorem}

We illustrate the situation via several examples. The first lists three
subcases in which $M(\lambda, J_\lambda)$ is either a Verma or a simple
module, so that the log-concavity of its character follows
from~\cite{HMMS} if $\mathfrak{g} = \mathfrak{sl}_n$, else (if $T>1$
then) from Theorem~\ref{T5} above.

\begin{example}[Examples covered by \cite{HMMS}]
Let $\mathfrak{g} = \oplus_{t=1}^T \mathfrak{sl}_{n_t+1}(\mathbb{C})$, as
above.
\begin{enumerate}
\item Suppose $\lambda \in \Lambda^+$, so that $J_\lambda$ consists of
all Dynkin diagram nodes: $J_\lambda = I = \sqcup_t (t, [n_t])$. Then
$\Psi_\lambda^+$ is empty, so condition $(M+)$ is indeed satisfied;
moreover, the parabolic Verma module is finite-dimensional: $M(\lambda,
J_\lambda) \cong V(\lambda)$.

\item Suppose instead that $\lambda$ is antidominant. Then $J_\lambda$ is
empty, hence so is $\Psi_\lambda^+$ by definition. Thus condition $(M+)$
is again satisfied, and the parabolic Verma is $M(\lambda,\emptyset) =
M(\lambda) = V(\lambda)$ by Theorem~\ref{Thum}.

\item Let $\mathfrak{g} = \mathfrak{sl}_2^{\oplus T}$, so that $\Phi^+ =
\{ \alpha^{(1)}, \dots, \alpha^{(T)} \}$; and let $\lambda \in
\mathfrak{h}^*$ be arbitrary. Then akin to~\eqref{Ecommute}, one has
$M(\lambda, J_\lambda) \cong \otimes_{t=1}^T M_t(\lambda_t,
J_{\lambda_t})$ (see~\eqref{EJt}). Moreover, every highest weight module
over $\mathfrak{sl}_2$ is either a Verma module or a finite-dimensional
simple module; this yields that $M_t(\lambda_t,J_{\lambda_t}) \cong
V_t(\lambda_t)$. Thus, $M(\lambda,J) \cong \otimes_{t=1}^T
V_t(\lambda_t)$ is simple. Moreover, condition $(M+)$ holds because
$\rho(h_\beta) = 1$ for all $\beta \in \Phi^+$, so that $\Psi_\lambda^+$
is empty.
\end{enumerate}
In all of these cases, the $x^\delta$-shifted character of $V(\lambda)$
is denormalized Lorentzian by Theorem~\ref{T5}. \qed
\end{example}

The next example was discussed in point~(3) above -- and in the
introduction. It identifies a large (within the
non-generic/non-antidominant) set of weights $\lambda$ for which the
simple module $V(\lambda)$ is infinite-dimensional and not Verma, and
whose characters we prove are shifted denormalized Lorentzian and hence
log-concave by Theorem~\ref{T5}. In particular, this goes beyond Huh et
al's results in \cite{HMMS} in ascertaining their Conjecture~12.

\begin{example}\label{Exgeneric}
For this example, let $\mathfrak{g} \neq \mathfrak{sl}_2(\mathbb{C})$ be
as above. Suppose $\lambda \in \mathfrak{h}^*$ is ``generic
non-antidominant'': specifically, say $(\lambda + \rho)(h_\alpha)$ is a
positive integer for a unique positive root $\alpha$, which is moreover
simple: $\alpha = \alpha_{i,i+1}^{(t)}$ for some $t \in [T]$ and $i \in
[n_t]$. Then $M(\lambda)$ contains the proper submodule
$f_\alpha^{\lambda(h_\alpha) + 1} m_\lambda$, hence is not simple; and
$\lambda \not\in \Lambda^+$ so $V(\lambda)$ is not finite-dimensional.

Nevertheless, one can prove Conjecture~\ref{Conj} for $V(\lambda)$.
Indeed, $J_\lambda = \{ (t,i) \}$, so $\Psi_\lambda^+$ is empty, and
hence Jantzen's criterion yields $M(\lambda, J_\lambda) = V(\lambda)$.
Now Theorem~\ref{T5} yields Conjecture~\ref{Conj}. \qed
\end{example}

The preceding construction generalizes to smaller dimensional
``exceptional'' subsets of $\mathfrak{h}^*$, as we now show.

\begin{example}\label{Exdim}
Suppose $\mathfrak{g} = \oplus_{t=1}^T \mathfrak{sl}_{n_t+1}(\mathbb{C})$
with $\sum_t n_t \geq 2$. Choose any nonempty proper subset of Dynkin
diagram nodes $\emptyset \subsetneq J \subsetneq \sqcup_{t=1}^T (t,
[n_t])$. Now choose a nonnegative integer $n_{t,i}$ for every node $(t,i)
\in J$; while for $(t,i) \not\in J$ we choose complex numbers $z_{t,i}$
such that no nonempty subset of them sums to an integer $\geq -\sum_t
n_t$. (E.g., this can be done by choosing $z_0 := 1$ and extending it to
a $\mathbb{Q}$-linearly independent set of $z_{t,i}$; or one can choose
distinct primes $p_{t,i}$ and let $z_{t,i} := 1/p_{t,i}$; or one can even
choose $z_{t,i}$ to be sufficiently negative integers.)

Finally, define $\lambda \in \mathfrak{h}^*$ via its action on the simple
coroots:
\[
\lambda(h_{t,i}) = \lambda(h_{\alpha_{i,i+1}^{(t)}}) = \begin{cases}
n_{t,i}, \qquad & \text{if } (t,i) \in J,\\
z_{t,i}, \qquad & \text{if } (t,i) \not\in J,
\end{cases}
\]
and extend by $\mathbb{C}$-linearity to all of $\mathfrak{h}$. It is
clear that
(a)~$J_\lambda = J$;
(b)~since $J$ is nonempty, $\lambda$ is not antidominant; and
(c)~since $J$ is a proper subset, $\lambda \not\in P^+$.
Thus -- even when $T=1$ -- we are beyond the cases covered
in~\cite{HMMS}. However, recall the paragraph after Definition~\ref{Dkpf}
discussing the graph $G_J$. Now by the choice of $z_{t,i}$,
and since $h_{\alpha_{ij}^{(t)}} = h_{\alpha_{i,i+1}^{(t)}} + \cdots +
h_{\alpha_{j-1,j}^{(t)}}$ for all $i<j$, $\Psi_\lambda^+$ is empty.
But then Theorem~\ref{Tjantzen} yields that $M(\lambda,J_\lambda)$ is
(neither a Verma nor finite-dimensional, but is) simple, i.e.,
$V(\lambda)$. Thus we again obtain Conjecture~\ref{Conj} from
Theorem~\ref{T5}. \qed
\end{example}

For our final example, we systematically analyze the simplicity of
parabolic Vermas for all highest weights in rank $2$.

\begin{example}
Suppose $\mathfrak{g} = \mathfrak{sl}_3(\mathbb{C})$, so that $J_\lambda
\subseteq \{ 1, 2 \}$ for all $\lambda \in \mathfrak{h}^*$. We classify
the highest weights $\lambda \in \mathfrak{h}^*$ for which $M(\lambda,
J_\lambda)$ is simple.
\begin{enumerate}
\item If $\lambda(h_1), \lambda(h_2) \in \mathbb{N}$ then
$M(\lambda,J_\lambda)$ is finite-dimensional and simple.

\item If $\lambda(h_1), \lambda(h_2) \in \mathbb{C} \setminus
\mathbb{N}$, then $J_\lambda = \emptyset$ and so $M(\lambda,J_\lambda) =
M(\lambda)$. Now there are two sub-cases.

First if $\lambda(h_1+h_2) + 1 \in \mathbb{C} \setminus \mathbb{N}$, then
$M(\lambda,J_\lambda)$ is again simple by Theorem~\ref{Thum}, since
$h_{\alpha_{13}} = h_1 + h_2$ and $\rho(h_1) = \rho(h_2) = 1$. In
particular, Theorem~\ref{T1} applies to $V(\lambda) = M(\lambda,
J_\lambda)$ and affirms Conjecture~\ref{Conj}. Note that if
$\lambda(h_1), \lambda(h_2) \in \mathbb{Z}$ then we recover (and affirm)
\cite[Conjecture~12]{HMMS}, but not otherwise, since $\lambda$ would not
be integral then.

Else $\Psi_\lambda^+ = \{ \beta := \alpha_{13} \}$. Hence condition
$(M+)$ fails, since $s_\beta(\gamma) \not\in \Phi_{J_\lambda} =
\emptyset$ for any $\gamma$. It follows by Theorem~\ref{Tjantzen} that
$M(\lambda,J_\lambda)$ is not simple.

\item Finally, say $\lambda(h_1) \in \mathbb{N} \not\ni \lambda(h_2)$
(the other case is similar by symmetry of the Dynkin diagram). Then
$J_\lambda = \{ 1 \}$ and $\Phi^+ \setminus \Phi^+_{J_\lambda} = \{
\alpha_{23}, \alpha_{13} \}$; in particular, $V(\lambda)$ is neither a
Verma nor finite-dimensional, so~\cite{HMMS} does not apply here. There
are again two sub-cases:

First if $\lambda(h_1 + h_2) + 1 \in \mathbb{C} \setminus \mathbb{N}$,
then $\lambda$ lies on a simple root affine-hyperplane and is ``generic''
as in Example~\ref{Exgeneric}. Hence $M(\lambda,J_\lambda)$ is simple and
Conjecture~\ref{Conj} holds, via Theorems~\ref{Tjantzen} and~\ref{T1}. As
above, if $\lambda(h_2) \in \mathbb{Z}$ then we recover (and affirm)
\cite[Conjecture~12]{HMMS}, while if $\lambda(h_2) \not\in \mathbb{Z}$
then \cite[Conjecture~12]{HMMS} does not apply.

Else $\Psi_\lambda^+ = \{ \beta := \alpha_{13} \}$. Now we claim that
$M(\lambda, J_\lambda)$ is not simple. Indeed, since $s_\beta(\gamma) \in
\Phi_{J_\lambda}$ we may assume that $s_\beta(\gamma) = \alpha_{12}$,
whence $\gamma = s_\beta(\alpha_{12}) = -\alpha_{23} \in \mathbb{Q}
\alpha_{12} + \mathbb{Q} \alpha_{13}$, as desired. Thus, condition $(M+)$
holds if and only if $(\lambda + \rho)(h_\gamma) = 0$. But
\[
(\lambda + \rho)(h_{-\alpha_{23}}) = -\lambda(h_2) - 1,
\]
which is not an integer (hence is nonzero) since $\lambda(h_2) \not\in
\mathbb{N}$. \qed
\end{enumerate}
Thus, the cases over $\mathfrak{sl}_3$ of \cite[Conjecture~12]{HMMS} that
were shown by Huh et al.\ to hold were when the integers $\lambda(h_1),
\lambda(h_2)$ are both nonnegative or both negative -- the first and
third ``quadrants'' of the lattice in the $XY$-plane, where $X,Y$ stand
for $\lambda(h_1), \lambda(h_2)$ respectively.
Our analysis affirms their conjecture over ``half'' of each of the other
two open quadrants -- more precisely, the lattice points in these, lying
on or below the line $X+Y=-2$.
After the above analysis, the only cases that remain unresolved are when
$\lambda(h_1) \in \mathbb{N}$ and $\lambda(h_2) \in [-1 - \lambda(h_1),
-1] \cap \mathbb{Z}$, or vice versa.
\end{example}

\end{document}